\crefname{hypothesis}{Hypothesis}{Hypotheses}
\newtheorem{remark1}{Remark}
\newtheorem{assumption}{Assumption}
\newtheorem{routine}{Routine}
\title{Projected Newton method for large-scale Bayesian linear inverse problems 
	\thanks{Submitted to the editors DATE.
  }
}
\author{Haibo Li
\thanks{School of Mathematics and Statistics, The University of Melbourne, Parkville, VIC 3010, Australia. 
  (\email{haibo.li@unimelb.edu.au}).}
}
\algrenewcommand\algorithmicrequire{\textbf{Input:}}
\algrenewcommand\algorithmicensure{\textbf{Output:}}
\newcommand\argmin{\mathop{\mathrm{argmin}}}
\newcommand\bA{\boldsymbol{A}}
\newcommand\bb{\boldsymbol{b}}
\newcommand\bx{\boldsymbol{x}}
\newcommand\bI{\boldsymbol{I}}
\newcommand\bC{\boldsymbol{C}}
\newcommand\bL{\boldsymbol{L}}
\newcommand\bU{\boldsymbol{U}}
\newcommand\bZ{\boldsymbol{Z}}
\newcommand\bV{\boldsymbol{V}}
\newcommand\bM{\boldsymbol{M}}
\newcommand\bN{\boldsymbol{N}}
\newcommand\bB{\boldsymbol{B}}
\newcommand\bD{\boldsymbol{D}}
\newcommand\bp{\boldsymbol{p}}
\newcommand\br{\boldsymbol{r}}
\newcommand\bs{\boldsymbol{s}}
\newcommand\bz{\boldsymbol{z}}
\newcommand\bu{\boldsymbol{u}}
\newcommand\bv{\boldsymbol{v}}
\newcommand\bw{\boldsymbol{w}}
\newcommand\by{\boldsymbol{y}}
\newcommand\be{\boldsymbol{e}}
\newcommand\bd{\boldsymbol{d}}
\newcommand\bSigma{\boldsymbol{\Sigma}}
\newcommand\bepsilon{\boldsymbol{\epsilon}}
\newcommand\calL{\mathcal{L}}
\numberwithin{equation}{section}
\numberwithin{figure}{section}
\numberwithin{table}{section}
\newenvironment{proofof}[1]{\begin{trivlist}
  \item[\hskip\labelsep{\bf Proof of {#1}.}]}{$\hfill\Box$\end{trivlist}}
\begin{document}
\maketitle

\begin{abstract}
	Computing the regularized solution of Bayesian linear inverse problems as well as the corresponding regularization parameter is highly desirable in many applications. This paper proposes a novel iterative method, termed the \textit{Projected Newton method} (\textsf{PNT}), that can simultaneously update the regularization parameter and solution step by step without requiring any expensive matrix inversions or decompositions. By reformulating the Tikhonov regularization as a constrained minimization problem and leveraging its Lagrangian function, a Newton-type method coupled with a Krylov subspace method is designed for the unconstrained Lagrangian function. The resulting \textsf{PNT} algorithm only needs solving a small-scale linear system to get a descent direction of a merit function at each iteration, thus significantly reducing computational overhead. Rigorous convergence results are proved, showing that \textsf{PNT} always converges to the unique regularized solution and the corresponding Lagrangian multiplier.  Experimental results on both small and large-scale Bayesian inverse problems demonstrate its excellent convergence property, robustness and efficiency. Given that the most demanding computational tasks in \textsf{PNT} are primarily matrix-vector products, it is particularly well-suited for large-scale problems.
\end{abstract}

\begin{keywords}
	Bayesian inverse problem, Tikhonov regularization, constrained optimization, Newton method, generalized Golub-Kahan bidiagonalization, projected Newton direction
\end{keywords}

\begin{MSCcodes}
65J22, 65J20, 65K10, 90C06
\end{MSCcodes}

\section{Introduction}
Inverse problems arise in various scientific and engineering fields, where the aim is to recover unknown parameters or functions from noisy observed data. Applications include image reconstruction, computed tomography, medical imaging, geoscience, data assimilation and so on \cite{Kaip2006,Hansen2006,Buzug2008,Law2015,Richter2016}. A linear inverse problem of the discrete form can be written as
\begin{equation}\label{inverse1}
	\bb = \bA\bx+\bepsilon, 
\end{equation}
where $\bx\in\mathbb{R}^{n}$ is the underlying quantity to reconstruct, $\bA\in\mathbb{R}^{m\times n}$ is the discretized forward model matrix, $\bb\in\mathbb{R}^{m}$ is the vector of observation with noise $\bepsilon$. We assume that the distribution of $\bepsilon$ is known, which follows a zero mean Gaussian distribution with positive definite covariance matrix $\bM$, i.e., $\bepsilon\sim\mathcal{N}(\boldsymbol{0}, \bM)$. A big challenge for reconstructing a good solution is the ill-posedness of inverse problems, which means that there may be multiple solutions that fit the observation equally well, or the solution is very sensitive with respect to observation perturbation.

To overcome the ill-posedness, regularization is a commonly used technique. From a Bayesian perspective \cite{Kaip2006,Stuart2010}, this corresponds to adding a prior distribution of the desired solution to constrain the set of possible solutions to improve stability and uniqueness. By treating $\bx$ and $\bb$ as random variables, the observation vector $\bb$ has a conditional probability density function (pdf) of the form
$
	p(\bb|\bx) \propto \exp\left(-\frac{1}{2}\|\bA\bx-\bb\|_{\bM^{-1}}^{2}\right).
$
To get a regularized solution, this paper considers a Gaussian prior about the desired solution with the form $\bx\sim\mathcal{N}(\boldsymbol{0}, \mu^{-1}\bN)$, where $\bN$ is a positive definite covariance matrix. Then the Bayes' formula leads to
\begin{equation*}
	p(\bx|\bb,\lambda) 
	\propto p(\bx|\lambda)p(\bb|\bx)
	\propto \exp\left(-\frac{1}{2}\|\bA\bx-\bb\|_{\bM^{-1}}^{2}-\frac{\mu}{2}\|\bx\|_{\bN^{-1}}^2\right),
\end{equation*}
where $\|\bx\|_{\bB}:=(\bx^{\top}\bB\bx)^{1/2}$ is the $\bB$-norm of $\bx$ for a positive definite matrix $\bB$. Maximize the posterior pdf $p(\bx|\bb,\lambda)$ leads to the Tikhonov regularization problem
\begin{equation}\label{Bayes1}
	\min_{\bx \in \mathbb{R}^{n}}\{\|\bA\bx-\bb\|_{\bM^{-1}}^{2} + \mu\|\bx\|_{\bN^{-1}}^2\},
\end{equation}
where the regularization term $\mu\|\bx\|_{\bN^{-1}}^2$ enforces extra structure on the solution that comes from the prior distribution of $\bx$. 

The parameter $\mu$ in the Gaussian prior $\mathcal{N}(\boldsymbol{0}, \mu^{-1}\bN)$ is crucial for obtaining a good regularized solution, which controls the trade-off between the data-fit term and regularization term. There is tremendous effort in determining a proper value of $\mu$. For the standard 2-norm problem, i.e. $\bM=\bI$ and $\bN=\bI$, the classical parameter-selection methods include the L-curve criterion\cite{Hansen1992}, generalized cross-validation \cite{Golub1979}, unbiased predictive risk estimation \cite{renaut2019unbiased} and discrepancy principle \cite{Morozov1966}. There are also some iterative methods based on solving a nonlinear equation of $\mu$; see e.g. \cite{mead2008newton,bazan2008fixed,reichel2008new,gazzola2020krylov}. However, the aforementioned methods can not be directly applied to \cref{Bayes1}. A common procedure needs to first transform \cref{Bayes1} into standard 2-norm form
\begin{equation}\label{gen_regu}
	\min_{\bx \in \mathbb{R}^{n}}\{\|\bL_{M}(\bA\bx-\bb)\|_{2}^{2} + \mu\|\bL_{N}\bx\|_{2}^2\},
\end{equation}
where $\bM^{-1}=\bL_{M}^{\top}\bL_{M}$ and $\bN^{-1}=\bL_{N}^{\top}\bL_{N}$ are the Cholesky factorizations, and then apply the parameter-selection methods. This procedure needs the matrix inversions of $\bM$ and $\bN$ as well as the Cholesky factorizations of $\bM^{-1}$ and $\bN^{-1}$. For large-scale matrices, these two types of computations are almost impossible or extremely expensive. 

For large-scale problems, there exist some iterative regularization methods that can avoid choosing $\mu$ in advance. A class of commonly used iterative methods is based on Krylov subspace \cite{liesen2013krylov}, where the original linear system is projected onto lower-dimensional subspaces to become a series of small-scale problems \cite{Oleary1981,Gazzola2015,jia2023joint,li2024joint}. For dealing with the general-form Tikhonov regularization term $\|\bL_{N}\bx\|_{2}^2$, some recent Krylov iterative methods include \cite{morigi2007orthogonal,Kilmer2007Projection,Reichel2012tikhonov,huang2019choice,Li2023} and so on. When the Cholesky factor $\bL_{N}$ is not accessible, a key difficulty is dealing with the prior covariance $\bN$, which means that the subspaces should be constructed elaborately such that the prior information of $\bx$ can be effectively incorporated into these subspaces \cite{Calvetti2018,li2023subspace}. Such methods have been proposed in \cite{calvetti2005priorconditioners,calvetti2017priorconditioned,Calvetti2018}, where a statistically inspired priorconditioning technique is used to whiten the noise and the desired solution. However, these methods still require large-scale matrix inversions and Cholesky factorizations, which prohibits their applications to large-scale problems. 

Recently, there are several Krylov methods for directly solving \cref{inverse1} without choosing $\mu$ in advance and can avoid the matrix inversions and Cholesky factorizations \cite{Chung2017,li2023subspace}. These methods use the generalized Golub-Kahan bidiagonalization (\textsf{gen-GKB}), which can iteratively reduce the original large-scale problem to small-scale ones and generate Krylov subspaces that effectively incorporate the prior information of $\bx$ encoded by $\bN$. In \cite{li2023subspace}, the regularization effect of the proposed method comes from early stopping the iteration, where the iteration number plays the role of the regularization parameter, while in \cite{Chung2017}, the authors proposed a hybrid regularization method that simultaneously computes the regularized parameter and solution step by step. Although these two methods are very efficient for large-scale problems, there may be some issues in certain situations. The method in \cite{li2023subspace} only computes a good regularized solution but not a good $\mu$. However, in some applications, we need an accurate estimate of $\mu$ to get the posterior distribution of $\bx$ for sampling and uncertainty quantification \cite{Stuart2010,saibaba2020efficient,garbuno2020interacting}. For the hybrid method in \cite{Chung2017}, the convergence property does not have a solid theoretical foundation, and it has been numerically found that the method sometimes does not converge to a good solution, which is a common potential flaw for hybrid methods \cite{Chungnagy2008,Renaut2017}. 

Many optimization methods have been proposed for inverse problems, particularly those stemming from image processing that leads to total variation regularization or $\ell_p$ regularization. These methods include the Bregman iteration \cite{osher2005iterative,yin2008bregman,goldstein2009split}, iterative shrinkage thresholding \cite{daubechies2004iterative,beck2009fast}, and many others \cite{afonso2010augmented,tian2016linearized,luiken2021relaxed}. However, these methods either need a good parameter $\mu$ in advance or can not well deal with $\bM^{-1}$ and $\bN^{-1}$. In \cite{landi2008lagrange} the author proposed a modification of the Newton method that can iteratively compute a good $\mu$ and regularized solution simultaneously. However, this method needs to solve a large-scale linear system at each iteration, which is very costly for large-scale problems. This method was improved in \cite{cornelis2020projected1,cornelis2020projected2}, where the Newton method is successfully combined with a Krylov subspace method to get a so-called projected Newton method. Compared with the original method, the projected Newton method only needs to solve a small-scale linear system at each iteration, thereby very efficient for large-scale Tikhonov regularization \cref{gen_regu}. However, for solving \cref{Bayes1}, this method needs to compute $\nabla(\frac{1}{2}\|\bx\|_{\bN^{-1}}^2)=\bN^{-1}\bx$ to construct subspaces, which is also very costly. Besides, their methods lack rigorous proof of convergence.

In this paper, we develop a new efficient iterative method for \cref{Bayes1} that simultaneously updates the regularization parameter and solution step by step, and it does not require any expensive matrix inversions or Cholesky factorizations. This method follows the Newton-type approach for noise constrained Tikhonov regularization proposed in \cite{cornelis2020projected1}, where the \textsf{gen-GKB} process is integrated to compute a projected Newton direction by solving a small-scale linear system at each iteration, thereby it is also named the \textit{projected Newton method} (\textsf{PNT}). The main contributions of this paper are listed as follows:
\begin{itemize}
    \item We reformulate the regularization of the original Bayesian linear inverse problem as a noise constrained minimization problem and prove the existence, uniqueness and positivity of its Lagrangian multiplier $\lambda$ under a very reasonable assumption. The correspondence between the constrained minimization problem and Tikhonov regularization \cref{Bayes1} is connected by $\mu=1/\lambda$.
    \item We propose a \textsf{gen-GKB} based Newton-type method to compute the regularized solution by optimizing its Lagrangian function and obtaining the corresponding Lagrangian multiplier. A series of Krylov subspaces is generated by \textsf{gen-GKB}, avoiding the need for costly matrix inversions or Cholesky factorizations. Using the subspace projection technique, we only need to solve a small-scale linear system to compute the descent direction at each iteration.
    \item A rigorous proof of convergence for the proposed method is provided. With a very practical initialization $(\bx_0,\lambda_0)$, we prove that \textsf{PNT} always converges to the unique solution of the constrained minimization problem and the corresponding Lagrangian multiplier.
\end{itemize}

We use both small-scale and large-scale inverse problems to test the proposed method and compare it with other state-of-the-art methods. The experimental results demonstrate excellent convergence properties of \textsf{PNT}, and it is very robust and efficient for regularizing Bayesian linear inverse problems. Since the most computationally intensive operations in \textsf{PNT} primarily involve matrix-vector products, it is especially appropriate for large-scale problems.

This paper is organized as follows. In \Cref{sec2}, we formulate the noise constrained minimization problem for regularizing \cref{inverse1} and study its properties. In \Cref{sec3}, we propose the \textsf{PNT} method. In \Cref{sec4}, we prove the convergence  of  \textsf{PNT}. Numerical results are presented in \Cref{sec5} and conclusions are provided in \Cref{sec6}.


\section{Noise constrained minimization for Bayesian inverse problems} \label{sec2}
In order to get a good estimate of $\mu$ in \cref{Bayes1}, the discrepancy principle (DP) criterion is commonly used, which depends on the variance of the noise. Based on DP, we can rewrite \cref{Bayes1} as an equivalent form of noise constrained minimization problem.

\subsection{Noise constrained minimization}
If $\bepsilon\sim\mathcal{N}(\mathbf{0},\sigma^2\bI)$ is a white Gaussian noise, the DP criterion states that the $2$-norm discrepancy between the data and predicted output $\|\bA\bx({\mu})-\bb\|_2$ should be of the order of $\|\bepsilon\|_{2}\approx \sqrt{m}\sigma$, where $\bx(\mu)$ is the solution to \cref{Bayes1}; see \cite[\S 5.6]{Kaip2006}. If $\bepsilon$ is a general Gaussian noise, notice that \cref{inverse1} leads to
$
	\bL_{M}\bb = \bL_{M}\bA\bx + \bL_{M}\bepsilon,
$
and $\bL_{M}\bepsilon\sim\mathcal{N}(\mathbf{0}, \bI)$, thereby this transformation whitens the noise. Since $\bar{\bepsilon}:=\bL_{M}\bepsilon$ is a white Gaussian noise with zero mean and covariance $\bI$, it follows that
\[
	\mathbb{E}\left[\|\bar{\bepsilon}\|_{2}^{2}\right] = \mathbb{E}\left[\mathrm{trace}\left(\bar{\bepsilon}^{\top}\bar{\bepsilon}\right)\right]
	= \mathbb{E}\left[\mathrm{trace}\left(\bar{\bepsilon}\bar{\bepsilon}^{\top}\right)\right]
	= \mathrm{trace}\left(\mathbb{E}\left[\bar{\bepsilon}\bar{\bepsilon}^{\top}\right]\right)
	= \mathrm{trace}\left(\bI\right) = m.
\]
Therefore, the DP for \cref{inverse1} can be written as 
\begin{equation}\label{DP0}
	\|\bA\bx({\mu})-\bb\|_{\bM^{-1}}^2=\|\bL_{M}\bA\bx({\mu})-\bL_{M}\bb\|_{2}^2 = \tau m ,
\end{equation}
where $\tau$ is chosen to be marginally greater than $1$, such as $\tau=1.01$.

Using this expression of DP, we rewrite the regularization of \cref{inverse1} as the noise constrained minimization problem 
\begin{equation}\label{discrepancy}
	\min_{\bx\in\mathbb{R}^{n}}\frac{1}{2}\|\bx\|_{\bN^{-1}}^{2} \ \ \ \mathrm{s.t.} \ \ \
	\frac{1}{2}\|\bA\bx-\bb\|_{\bM^{-1}}^{2} \leq \frac{\tau m}{2} ,
\end{equation}
where its Lagrangian is
\begin{equation}\label{lagr}
	\calL(\bx,\lambda)=\frac{1}{2}\|\bx\|_{\bN^{-1}}^{2}+\frac{\lambda}{2}\left(\|\bA\bx-\bb\|_{\bM^{-1}}^{2}-\tau m\right)
\end{equation}
with $\lambda\geq 0$ the Lagrangian multiplier. To further investigate \cref{discrepancy,lagr}, we first state the following basic assumption, which is used throughout the paper.

\begin{assumption}\label{assump1}
	For all $\bx\in \{\bx\in\mathbb{R}^{n}:\|\bA\bx-\bb\|_{\bM^{-1}}=\min\}$, it holds 
	\begin{equation}\label{assum_ineq}
		\|\bA\bx-\bb\|_{\bM^{-1}}^{2} < \tau m < \|\bb\|_{\bM^{-1}}^{2}.
	\end{equation}
\end{assumption}
The first inequality means that the naive solutions to \eqref{inverse1} fit the observation very well, and it ensures the feasible set of \cref{discrepancy} is nonempty. The second inequality comes from the condition $\|L_{\bM}\bepsilon\|_{2}<\|L_{\bM}\bb\|_{2}$, meaning that the noise does not dominate the observation, which ensures the effectiveness of the regularization. Under this assumption, the following result describes the solution to \cref{discrepancy}.

\begin{theorem}\label{thm:lagr}
	The noise constrained minimization \cref{discrepancy} has a unique solution $\bx^{*}$ satisfying $\|\bA\bx^{*}-\bb\|_{\bM^{-1}}^{2}=\tau m$. Furthermore, there is a unique $\lambda^{*}>0$, which is the Lagrangian multiplier corresponding to $\bx^*$ in \cref{lagr}.
\end{theorem}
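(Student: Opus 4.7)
The plan is to handle existence and uniqueness of $\bx^{*}$ first, then activity of the constraint, and finally existence, positivity, and uniqueness of $\lambda^{*}$ via the KKT conditions.

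\textbf{Existence and uniqueness of $\bx^{*}$.} First I would observe that the feasible set $\mathcal{F}=\{\bx:\tfrac{1}{2}\|\bA\bx-\bb\|_{\bM^{-1}}^{2}\leq \tfrac{\tau m}{2}\}$ is convex and closed (as a sublevel set of the continuous convex quadratic $\tfrac{1}{2}\|\bA\bx-\bb\|_{\bM^{-1}}^{2}$). By the first inequality in \Cref{assump1}, any $\bx$ achieving $\min_{\bx}\|\bA\bx-\bb\|_{\bM^{-1}}$ is a strict interior point, so $\mathcal{F}$ is nonempty and Slater's condition holds. The objective $f(\bx)=\tfrac{1}{2}\|\bx\|_{\bN^{-1}}^{2}$ is strictly convex and coercive since $\bN^{-1}\succ 0$. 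A strictly convex coercive function attains its minimum uniquely on any nonempty closed convex set, giving the unique minimizer $\bx^{*}$.

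\textbf{Activity of the constraint.} Next I would show $\bx^{*}\neq \boldsymbol 0$. The unconstrained minimizer of $f$ is $\boldsymbol 0$, but the second inequality in \Cref{assump1} gives $\tfrac{1}{2}\|\bA\cdot\boldsymbol 0-\bb\|_{\bM^{-1}}^{2}=\tfrac{1}{2}\|\bb\|_{\bM^{-1}}^{2}>\tfrac{\tau m}{2}$, so $\boldsymbol 0\notin\mathcal{F}$. Consequently $\bx^{*}$ cannot coincide with the unconstrained minimizer, and by convexity a constrained minimizer that differs from the unconstrained one must lie on the boundary of the feasible set. Thus $\|\bA\bx^{*}-\bb\|_{\bM^{-1}}^{2}=\tau m$.

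\textbf{Existence, positivity and uniqueness of $\lambda^{*}$.} Since Slater's condition holds and both the objective and the constraint function are convex and smooth, the KKT conditions are necessary and sufficient. There exists $\lambda^{*}\geq 0$ with
\begin{equation*}
\nabla_{\bx}\calL(\bx^{*},\lambda^{*})=\bN^{-1}\bx^{*}+\lambda^{*}\bA^{\top}\bM^{-1}(\bA\bx^{*}-\bb)=\boldsymbol 0,
\end{equation*}
together with primal feasibility and complementary slackness. If $\lambda^{*}=0$, stationarity would force $\bN^{-1}\bx^{*}=\boldsymbol 0$, hence $\bx^{*}=\boldsymbol 0$, contradicting the previous step; therefore $\lambda^{*}>0$. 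For uniqueness, suppose two multipliers $\lambda_{1}^{*},\lambda_{2}^{*}$ both satisfy stationarity at $\bx^{*}$. Subtracting the two identities yields $(\lambda_{1}^{*}-\lambda_{2}^{*})\,\bA^{\top}\bM^{-1}(\bA\bx^{*}-\bb)=\boldsymbol 0$. If $\bA^{\top}\bM^{-1}(\bA\bx^{*}-\bb)=\boldsymbol 0$, then stationarity again gives $\bx^{*}=\boldsymbol 0$, a contradiction; hence $\lambda_{1}^{*}=\lambda_{2}^{*}$.

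\textbf{Expected obstacle.} The convex-optimization skeleton is routine; the only delicate point is ensuring the KKT framework applies cleanly and that $\bx^{*}\neq \boldsymbol 0$ is used correctly both to exclude $\lambda^{*}=0$ and to rule out the degenerate case $\bA^{\top}\bM^{-1}(\bA\bx^{*}-\bb)=\boldsymbol 0$ in the uniqueness argument. Both reductions trace back to the second inequality in \Cref{assump1}, so the whole argument rests on carefully deploying that one hypothesis at the two places it is needed.
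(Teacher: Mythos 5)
Your proposal is correct, and it follows the same overall skeleton as the paper (strict convexity of the objective for uniqueness of $\bx^{*}$, KKT plus the right-hand inequality of \Cref{assump1} to force $\lambda^{*}>0$ and activity, and a nonvanishing-constraint-gradient argument for uniqueness of $\lambda^{*}$), but several details differ in ways worth noting. For existence you invoke coercivity of $\tfrac12\|\bx\|_{\bN^{-1}}^{2}$ on the closed convex feasible set, whereas the paper asserts the feasible set is compact; your version is actually the more robust one, since the sublevel set $\{\bx:\|\bA\bx-\bb\|_{\bM^{-1}}^{2}\le\tau m\}$ is unbounded whenever $\bA$ has a nontrivial nullspace (a case the paper allows, cf.\ $r=\mathrm{rank}(\bA)$ in \Cref{lem:tikh_sol}), so coercivity is the correct crutch. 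You establish constraint activity directly ($\boldsymbol{0}\notin$ feasible set, hence the minimizer cannot be interior), while the paper gets it from complementary slackness after first excluding $\lambda^{*}=0$; you also justify existence of a multiplier via Slater, where the paper leans on LICQ. For uniqueness of $\lambda^{*}$ your subtraction argument is essentially the paper's first (LICQ) proof, except that you derive $\bA^{\top}\bM^{-1}(\bA\bx^{*}-\bb)\neq\boldsymbol{0}$ from $\bx^{*}\neq\boldsymbol{0}$ (i.e., from the right-hand inequality of \Cref{assump1}), whereas the paper derives it from the left-hand inequality, noting that $\bx^{*}$ cannot be a least-squares minimizer because its residual equals $\tau m$; both routes are valid. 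The paper additionally gives a second, stronger uniqueness proof via strict monotonicity of $H(\lambda)=\tfrac12(\|\bA\bx_{\lambda}-\bb\|_{\bM^{-1}}^{2}-\tau m)$, which it reuses in the discussion following the theorem; your argument does not yield that monotonicity, but it is not needed for the statement as given.
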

\begin{proof}
	Let $\varphi(\bx):=\frac{1}{2}(\|\bA\bx-\bb\|_{\bM^{-1}}^{2}-\tau m)$, which is a convex function. In \cref{discrepancy} we seek solutions to $\min\frac{1}{2}\|\bx\|_{\bN^{-1}}^{2}$ in the feasible set $S:=\{\bx\in\mathbb{R}^{n}: \varphi(\bx)\leq 0\}$, which is the $0$-lower level set of $\varphi(\bx)$. Note that $S$ is a compact and convex set and $\frac{1}{2}\|\bx\|_{\bN^{-1}}^{2}$ is continuous and strictly convex. Thus, there is a unique solution $\bx^{*}$ to \cref{discrepancy}.
	Suppose $\lambda^{*}$ is a Lagrangian multiplier corresponding to $\bx^{*}$. By the Karush-Kuhn-Tucker (KKT) condition \cite[\S 12.3]{nocedal2006numerical}, the solution $(\bx^{*}, \lambda^{*})$ satisfies
	\begin{equation*}
		\begin{cases}
		\bN^{-1}\bx^{*}+\lambda^{*}\nabla\varphi(\bx^{*})=\mathbf{0},  \\
		\lambda^{*}\varphi(\bx^{*})=0, \\
		\lambda^{*}\geq 0.
		\end{cases}
	\end{equation*}
	If $\lambda^{*}=0$, then $\bN^{-1}\bx^{*}$, leading to $\bx^{*}=\mathbf{0}$. This means $\mathbf{0}\in S$, i.e. $\|\bb\|_{\bM^{-1}}^{2}\leq\tau m$, a contradiction. Consequently, it must hold $\lambda^{*}>0$. From the relation $\lambda^{*}\varphi(\bx^{*})=0$ we have $\varphi(\bx^{*})=0$, i.e. $\|\bA\bx^{*}-\bb\|_{\bM^{-1}}^{2}=\tau m$.

	For the uniqueness of $\lambda^{*}$, here we give two proofs. In the first proof, we note that $\nabla\varphi(\bx^{*})=\bA^{\top}\bM^{-1}(\bA\bx^*-\bb)\neq\mathbf{0}$ since $\bx^*\notin\{\bx\in\mathbb{R}^{n}:\|\bA\bx-\bb\|_{\bM^{-1}}=\min\}$ by \cref{assump1}. Therefore, the linear independence constraint qualification (LICQ) holds at $\bx^{*}$, which leads to the uniqueness of $\lambda^{*}$; see \cite[\S 12.3]{nocedal2006numerical}.
	
	In the second proof, we note that for any $\lambda\geq 0$, there is a unique $\bx_{\lambda}$ that solves the first equality of the KKT condition:
	\begin{equation}\label{KKT1}
		\bN^{-1}\bx+\lambda\nabla\varphi(\bx)=\mathbf{0}\ \ \Leftrightarrow \ \ 
		(\bN^{-1}+\lambda\bA^{\top}\bM^{-1}\bA)x=\lambda\bA^{\top}\bM^{-1}\bb,
	\end{equation}
	since $\bN^{-1}+\lambda\bA^{\top}\bM^{-1}\bA$ is positive definite. Here we prove a stronger property: \textit{there exist a unique $\lambda\geq 0$ such that $\|\bA\bx_{\lambda}-\bb\|_{\bM^{-1}}^{2}=\tau m$}. The existence of such a $\lambda$ has been proved, since $\bx^{*}=\bx_{\lambda^{*}}$. For the uniqueness, define two functions
	\[
		K(\lambda):= \frac{1}{2}\|\bx_{\lambda}\|_{\bN^{-1}}^{2}, \ \ \ 
		H(\lambda):= \frac{1}{2}\left(\|\bA\bx_{\lambda}-\bb\|_{\bM^{-1}}^{2}-\tau m\right) .
	\]
	Note that $\calL(\bx,\lambda)$ is strictly convex for a fixed $\lambda>0$, which has the unique minimizer $\bx_{\lambda}$. Thus, for any two positive $\lambda_{1}\neq\lambda_{2}$, we have 
	$
		\calL(\bx_{\lambda_1},\lambda_1) < \calL(\bx_{\lambda_2},\lambda_1)   \Leftrightarrow 
		K(\lambda_1)+\lambda_1H(\lambda_1) < K(\lambda_2)+\lambda_1H(\lambda_2) ,
	$
	since $\bx_{\lambda_1}\neq\bx_{\lambda_2}$; see the following \Cref{lem:tikh_sol}. Similarly, we have
	$
		K(\lambda_2)+\lambda_2H(\lambda_2) < K(\lambda_1)+\lambda_2H(\lambda_1) .
	$
	Adding the above two inequalities leads to
	$(\lambda_1-\lambda_2)(H(\lambda_1)-H(\lambda_2)) < 0$,
	meaning that $H(\lambda)$ is a strictly monotonic decreasing function. Therefore, there is a unique $\lambda$ such that $H(\lambda)=0$.
\end{proof}

We emphasize that \Cref{assump1} is essential for ensuring the validity of \Cref{thm:lagr} and plays a key role in the regularization of \cref{inverse1}. If the left inequality of \Cref{assump1} is violated, then either the feasible set of \cref{discrepancy} is empty when $\|\bA\bx-\bb\|_{\bM^{-1}}^{2} > \tau m$ or it becomes the equivalent least squares problem
\[ \min\frac{1}{2}\|\bx\|_{\bN^{-1}}^{2} \ \ \ \mathrm{s.t.} \ \ \
\|\bA\bx-\bb\|_{\bM^{-1}}^{2} =\min  \]
when $\|\bA\bx-\bb\|_{\bM^{-1}}^{2} = \tau m$.
The latter case means that no regularization is required, which makes the problem much easier to handle. The right inequality of \Cref{assump1} ensures the positivity of the Lagrangian multiplier $\lambda^{*}$, which is a necessary condition. To see it, let us assume there exist a $\lambda^{*}>0$ and follow the second proof for the uniqueness of $\lambda^*$. From the KKT condition it must hold that $\varphi(\bx_{\lambda^{*}})=H(\lambda^{*})=0$. Now $H(0)=\frac{1}{2}(\|\bb\|_{\bM^{-1}}^2-\tau m)\leq 0$ and $H(+\infty)=\frac{1}{2}(\|\bA\bx-\bb\|_{M^{-1}}^2-\tau m)<0$ for any $\bx\in\argmin_{\bx\in\mathbb{R}^{n}}\|\bA\bx-\bb\|_{\bM^{-1}}$. Since $H(\lambda)$ is strictly monotonically decreasing, the only possible zero root of $H(\lambda)$ is $\lambda=0$, which leads to a contradiction. In this case, it implies that the noise in $b$ is too large, resulting in $\lambda^{*}=0$ and a very poor regularized solution $\bx^{*}=\mathbf{0}$.

\begin{lemma}\label{lem:tikh_sol}
	For each $\lambda\geq0$, the regularization problem 
	\begin{equation}\label{tikh2}
		\min_{\bx\in\mathbb{R}^{n}}\{\lambda\|\bA\bx-\bb\|_{\bM^{-1}}^{2}+\|\bx\|_{\bN^{-1}}^2\}
	\end{equation}
	has the unique solution $\bx_{\lambda}$. If $\lambda_{1}\neq\lambda_2$, then $\bx_{\lambda_1}\neq\bx_{\lambda_2}$.
\end{lemma}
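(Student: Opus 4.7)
The plan is to handle the two claims separately: existence and uniqueness of $\bx_\lambda$ first, then injectivity of the map $\lambda \mapsto \bx_\lambda$.

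For existence and uniqueness I would note that the objective $f_\lambda(\bx) := \lambda\|\bA\bx - \bb\|_{\bM^{-1}}^2 + \|\bx\|_{\bN^{-1}}^2$ is strictly convex and coercive: the regularization term is strictly convex (since $\bN^{-1}$ is positive definite) and drives $f_\lambda(\bx) \to \infty$ as $\|\bx\| \to \infty$, while the data-fit term is merely convex. A unique minimizer is then immediate. Equivalently, setting $\nabla f_\lambda(\bx) = \mathbf{0}$ produces the normal equation $(\bN^{-1} + \lambda \bA^\top \bM^{-1} \bA)\bx = \lambda \bA^\top \bM^{-1} \bb$, which is exactly \cref{KKT1}, and $\bN^{-1} + \lambda \bA^\top \bM^{-1} \bA$ is positive definite for every $\lambda \geq 0$, so $\bx_\lambda$ is uniquely determined.

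For injectivity I would argue by contradiction: assume $\bx_{\lambda_1} = \bx_{\lambda_2} =: \bx$ with $\lambda_1 \neq \lambda_2$. In the generic case $\lambda_1, \lambda_2 > 0$, subtracting the two normal equations gives $(\lambda_1 - \lambda_2)\bA^\top \bM^{-1}(\bA\bx - \bb) = \mathbf{0}$, hence $\bA^\top \bM^{-1}(\bA\bx - \bb) = \mathbf{0}$; plugging this back into either normal equation yields $\bN^{-1}\bx = \mathbf{0}$, so $\bx = \mathbf{0}$ and therefore $\bA^\top \bM^{-1}\bb = \mathbf{0}$. The edge case $\lambda_1 = 0$ is even easier: $\bx_0 = \mathbf{0}$ by inspection, which forces $\bx_{\lambda_2} = \mathbf{0}$ and again $\bA^\top \bM^{-1}\bb = \mathbf{0}$.

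The main (if mild) obstacle is then ruling out $\bA^\top \bM^{-1}\bb = \mathbf{0}$. To that end I would invoke \Cref{assump1}: if $\bA^\top \bM^{-1}\bb = \mathbf{0}$, then $\mathbf{0}$ minimizes $\|\bA\bx - \bb\|_{\bM^{-1}}^2$, so the minimum value equals $\|\bb\|_{\bM^{-1}}^2$. But \Cref{assump1} requires $\|\bA\bx - \bb\|_{\bM^{-1}}^2 < \tau m < \|\bb\|_{\bM^{-1}}^2$ at any such minimizer, which is a direct contradiction. This closes the argument.
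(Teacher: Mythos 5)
Your proof is correct, and for the injectivity part it takes a genuinely different route from the paper. You subtract the two normal equations $(\bN^{-1}+\lambda_i\bA^{\top}\bM^{-1}\bA)\bx=\lambda_i\bA^{\top}\bM^{-1}\bb$ to force $\bA^{\top}\bM^{-1}(\bA\bx-\bb)=\mathbf{0}$, then $\bN^{-1}\bx=\mathbf{0}$, hence $\bx=\mathbf{0}$ and $\bA^{\top}\bM^{-1}\bb=\mathbf{0}$, which you rule out via \Cref{assump1}; the paper instead writes out the GSVD of $\{\bL_{M}\bA,\bL_{N}\}$, obtains the filter-factor expansion $\bx_{\lambda}=\sum_{i=1}^{r}\frac{\lambda\sigma_{i}}{\lambda\sigma_{i}^{2}+\rho_{i}^{2}}(\bu_{A,i}^{\top}\bL_{M}\bb)\bz_i$, and compares coefficients to get $(\lambda_1-\lambda_2)\sigma_i\rho_i^2=0$. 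Your argument is more elementary (no Cholesky factors or GSVD needed, only the normal equation \cref{KKT1} and positive definiteness) and it makes explicit that \Cref{assump1} is exactly what excludes the degenerate case $\bA^{\top}\bM^{-1}\bb=\mathbf{0}$, in which $\bx_{\lambda}\equiv\mathbf{0}$ and injectivity would genuinely fail — a case the paper's coefficient comparison passes over silently, since linear independence of the $\bz_i$ only forces equality of the products with $\bu_{A,i}^{\top}\bL_{M}\bb$, not of the filter factors themselves unless some of those inner products are nonzero. What the paper's GSVD route buys is an explicit closed form for $\bx_{\lambda}$ exhibiting how the solution depends on $\lambda$, but for the statement of this lemma your shorter argument suffices; your handling of the $\lambda_1=0$ edge case and of existence/uniqueness (strict convexity and coercivity, equivalently positive definiteness of $\bN^{-1}+\lambda\bA^{\top}\bM^{-1}\bA$) matches the paper.
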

\begin{proof}
	Note that the normal equation of \cref{tikh2} is equivalent to \cref{KKT1}. Thus, $\bx_{\lambda}$ is the unique solution to \cref{tikh2}. Using the Cholesky factors of $\bM^{-1}$ and $\bN^{-1}$, and noticing that $\bL_{N}$ is invertible, we can write the generalized singular value decomposition (GSVD) \cite{Van1976} of $\{\bL_{M}\bA, \bL_{N}\}$ as
	$
		\bL_{M}\bA = \bU_A\bSigma_A\bZ^{-1}, \ 
		\bL_{N} = \bU_N\bSigma_N\bZ^{-1}
	$
	with
	\begin{equation*}
		\bSigma_A = \bordermatrix*[()]{%
		\bD_{A}  &  &  r \cr
		&  \boldsymbol{0} & m-r \cr
		r & n-r 
	} , \ \ \
	\bSigma_N =
	\bordermatrix*[()]{%
		\bD_{N}  &  & r \cr
		&  \bI  & n-r \cr
		r & n-r
	} ,
	\end{equation*}
	where $\bU_{A}\in\mathbb{R}^{m\times m}$ and $\bU_{N}\in\mathbb{R}^{n\times n}$ are orthogonal, $\bZ=(\bz_1,\dots,\bz_n)$ in nonsingular, $r=\mathrm{rank}(\bA)$, and $\bD_{A}=\mathrm{diag}(\sigma_1,\dots,\sigma_r)$ with $1>\sigma_1\geq\cdots\geq\sigma_r>0$ and $\bD_{N}=\mathrm{diag}(\rho_1,\dots,\rho_r)$ with $0<\rho_1\leq\cdots\leq\rho_r<1$, such that $\sigma_{i}^{2}+\rho_{i}^{2}=1$. Then $\bx_{\lambda}$ can be expressed as $\bx_{\lambda} =\sum_{i=1}^{r}\frac{\lambda\sigma_{i}}{\lambda\sigma_{i}^{2}+\rho_{i}^{2}}(\bu_{A,i}^{\top}\bL_{M}\bb)\bz_i$
	where $\bu_{A,i}$ is the $i$-th column of $\bU_A$. Since $\{\bz_{i}\}_{i=1}^{r}$ are linear independent, if $\bx_{\lambda_1}=\bx_{\lambda_2}$, then it must hold 
	\[
		\frac{\lambda_1\sigma_{i}}{\lambda_1\sigma_{i}^{2}+\rho_{i}^{2}} 
		= \frac{\lambda_2\sigma_{i}}{\lambda_2\sigma_{i}^{2}+\rho_{i}^{2}} \ \  \Leftrightarrow \ \ 
		(\lambda_1-\lambda_2)\sigma_{i}\rho_{i}^2 = 0, \ \ i=1,\dots,r .
	\]
	Since $\sigma_i\rho_{i}>0$ for $i=1,\dots,r$, we obtain $\lambda_1=\lambda_2$.
\end{proof}

\begin{remark1}
	From the proof of \Cref{thm:lagr}, we find that $\lambda$ plays the role of $\mu^{-1}$ in \cref{Bayes1}, meaning that $\bx({\mu})=\bx_{\lambda}$ if $\lambda=\mu^{-1}$. In fact, there is a one-to-one correspondence between \cref{Bayes1} and \cref{discrepancy}. Note that $\bx^{*}= \bx_{\lambda^{*}}$. Comparing \cref{tikh2} with \cref{Bayes1}, we can use $(\lambda^{*})^{-1}$ as a good estimate of the optimal regularization parameter. 
\end{remark1}

\begin{corollary}\label{coro:F}
	Let $\mathbb{R}^{+}=[0,\infty)$. Write the gradient of $\calL(\bx,\lambda)$ as
	\begin{equation}
		F(\bx,\lambda)=
		\begin{pmatrix}
			\lambda\bA^{\top}\bM^{-1}(\bA\bx-\bb)+\bN^{-1}\bx \\
			\frac{1}{2}\|\bA\bx-\bb\|_{\bM^{-1}}^{2}-\frac{\tau m}{2}
		\end{pmatrix} .
	\end{equation}
	Then $F(\bx,\lambda)=\mathbf{0}$ has a unique solution $(\bx^{*},\lambda^{*})$ in $\mathbb{R}^{n}\times\mathbb{R}^{+}$, which is the unique minimizer and corresponding Lagrangian multiplier of \cref{discrepancy}.
\end{corollary}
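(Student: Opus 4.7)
The plan is to recognize that the equation $F(\bx,\lambda)=\mathbf{0}$ is nothing more than the full KKT system from the proof of \Cref{thm:lagr}, with the complementary slackness already specialized to the active case. The first block of $F$ records $\nabla_{\bx}\calL(\bx,\lambda)=\mathbf{0}$, and the second block records $\varphi(\bx)=0$, which is the active form of the inequality constraint in \cref{discrepancy}. Thus existence of a root of $F$ is immediate: \Cref{thm:lagr} produces $(\bx^{*},\lambda^{*})$ with $\lambda^{*}>0$ and $\varphi(\bx^{*})=0$, so $(\bx^{*},\lambda^{*})\in\mathbb{R}^{n}\times\mathbb{R}^{+}$ satisfies $F(\bx^{*},\lambda^{*})=\mathbf{0}$.

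For uniqueness, I would take any $(\bx,\lambda)\in\mathbb{R}^{n}\times\mathbb{R}^{+}$ with $F(\bx,\lambda)=\mathbf{0}$ and reduce it to the one-parameter picture already developed. The first block is exactly equation \cref{KKT1}, so by \Cref{lem:tikh_sol} it forces $\bx=\bx_{\lambda}$, the unique minimizer of \cref{tikh2}. Substituting into the second block gives $H(\lambda)=0$. Next I would dispose of the boundary case $\lambda=0$: then the first block collapses to $\bN^{-1}\bx=\mathbf{0}$, i.e.\ $\bx=\mathbf{0}$, whence the second block becomes $\tfrac{1}{2}\|\bb\|_{\bM^{-1}}^{2}=\tfrac{\tau m}{2}$, directly contradicting the right inequality of \Cref{assump1}. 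Hence every root must have $\lambda>0$.

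It then remains to invoke the strict monotonicity of $H(\lambda)$ on $(0,\infty)$ established in the second proof of \Cref{thm:lagr}: $H$ is strictly decreasing, so $H(\lambda)=0$ admits at most one positive root, which must coincide with $\lambda^{*}$. Combined with $\bx=\bx_{\lambda}=\bx_{\lambda^{*}}=\bx^{*}$, this yields $(\bx,\lambda)=(\bx^{*},\lambda^{*})$. The identification with the minimizer and Lagrangian multiplier of \cref{discrepancy} is then simply a re-reading of \Cref{thm:lagr}.

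I do not anticipate any real obstacle here; the proof is essentially bookkeeping. The only subtle point is the boundary value $\lambda=0$, which is admitted by $\mathbb{R}^{+}=[0,\infty)$ but was excluded \emph{a priori} in the uniqueness argument of \Cref{thm:lagr}; this is precisely where the right inequality in \Cref{assump1} must be reused to close the gap.
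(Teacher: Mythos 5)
Your proposal is correct and follows essentially the same route the paper intends: the corollary is a direct unpacking of the proof of \Cref{thm:lagr}, where the first block of $F=\mathbf{0}$ is \cref{KKT1} forcing $\bx=\bx_{\lambda}$, the second block is $H(\lambda)=0$, and the strict monotonicity of $H$ together with \Cref{assump1} (which rules out $\lambda=0$) gives uniqueness. Your explicit treatment of the boundary case $\lambda=0$ is a sensible piece of bookkeeping that the paper handles implicitly via the same inequality $\tau m<\|\bb\|_{\bM^{-1}}^{2}$.
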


\subsection{Newton method}
A modification of the Newton method was proposed in \cite{landi2008lagrange} to solve the nonlinear equation $F(\bx,\lambda)=\mathbf{0}$, which is referred to as the Lagrange method since it is based on the Lagrangian of \cref{discrepancy}. In this method, the Jacobian matrix of $F(\bx,\lambda)$ is first computed as
\begin{equation}\label{Jacob}
	J(\bx,\lambda)=
	\begin{pmatrix}
		\lambda\bA^{\top}\bM^{-1}\bA+\bN^{-1} & \bA^{\top}\bM^{-1}(\bA\bx-\bb) \\
		(\bA\bx-\bb)^{\top}\bM^{-1}\bA & 0
	\end{pmatrix}
\end{equation}
at the current iterate $(\bx,\lambda)$, and then it computes the Newton direction $(\Delta\bx^{\top},\Delta\lambda)^{\top}$ by solving inexactly the linear system
\begin{equation}\label{Newton_direc}
	J(\bx,\lambda)\begin{pmatrix}
		\Delta\bx \\ \Delta\lambda
	\end{pmatrix}=-F(\bx,\lambda) 
\end{equation}
using the MINRES solver \cite{paige1975solution}. We remark that this method is essentially a Newton-Krylov method \cite{brown1994convergence} for optimizing the nonlinear and nonconvex Lagrangian function \cref{lagr}. It was shown that the computed $(\Delta\bx,\Delta\lambda)$ is a descent direction for the merit function
\[
	h_{w}(\bx,\lambda)=\frac{1}{2}\left(\|\nabla_{\bx}\calL(\bx,\lambda)\|_{2}^2+w|\nabla_{\lambda}\calL(\bx,\lambda)|^2\right) 
\]
with a $w>0$, which means that $(\Delta\bx^{\top}, \Delta\lambda)\nabla h_{w}(\bx,\lambda)\leq 0$.
By a backtracking line search strategy to determine a step length $\gamma>0$, the iterate is updated as $(\bx,\lambda)\leftarrow (\bx,\lambda)+\gamma(\Delta\bx,\Delta\lambda)$. 

An advantage of this method is that it can compute a good regularized solution and its regularization parameter simultaneously. However, for large-scale problems, we need to compute $\bM^{-1}$ and $\bN^{-1}$ to form $F(\bx,\lambda)$ and $J(\bx,\lambda)$, which is almost impossible. Moreover,  at each iteration, an $(n+1)\times(n+1)$ linear system \eqref{Newton_direc} needs to be solved, which is very computationally expensive even if we only compute a less accurate solution by an iterative algorithm.

In \cite{cornelis2020projected1}, the authors proposed a projected Newton method, where at each iteration, the large-scale linear system \eqref{Newton_direc} is projected to be a small-scale linear system that can be solved cheaply. However, this method can only deal with the standard $\ell_2-\ell_2$ regularization, which means we can only apply this method to \eqref{gen_regu} by the substitution $\bar{\bx}=\bL_{N}\bx$, requiring the expensive Cholesky factorization of $\bN^{-1}$. A generalization of this method \cite{cornelis2020projected2} can deal with a general-form regularization term. However, for \eqref{discrepancy}, it needs to compute $\nabla(\frac{1}{2}\|x\|_{\bN^{-1}}^2)=\bN^{-1}x$ to construct subspace for projecting \eqref{Newton_direc}, also very costly.

\section{Projected Newton method based on generalized Golub-Kahan bidiagonalization} \label{sec3}
To reduce expensive computations of the Newton method for large-scale problems, we design a new projected Newton method to solve \cref{discrepancy}. This method uses the generalized Golub-Kahan bidiagonalization (\textsf{gen-GKB}) to construct Krylov subspaces to compute projected Newton directions by only solving small-scale problems, and it does not need any expensive matrix inversions or decompositions. This method is composed by the following three main steps:
\begin{enumerate}
\item[Step 1:] \emph{Construct Krylov subspaces.} We adopt \textsf{gen-GKB} to iteratively construct a series of low-dimensional Krylov subspaces; see \Cref{alg:GKB}. 
\item[Step 2:] \emph{Compute the projected Newton direction.} At each iteration, we compute the projected Newton direction by solving a small-scale problem; see \cref{proj_direction}.
\item[Step 3:] \emph{Determine the step-length to update solution.} We use the Armijo backtracking line search to determine a step-length and update the solution; see \Cref{rout:backtracking}.
\end{enumerate}

In the next subsection, we present detailed derivations of the whole algorithm. All the proofs can be found in \Cref{subsec3.3}.


\subsection{Derivation of projected Newton method}\label{subsec3.2}
This subsection presents detailed derivations for the above three steps.
\medskip

\noindent\textbf{Step 1: Construct Krylov subspaces by gen-GKB.} 
The \textsf{gen-GKB} process has been proposed for solving Bayesian linear inverse problems in \cite{Chung2017,li2023subspace}. The basic idea is to treat $\bA$ as the compact linear operator
\[
	\bA: (\mathbb{R}^{n}, \langle\cdot,\cdot\rangle_{\bN^{-1}}) \to (\mathbb{R}^{m}, \langle\cdot,\cdot\rangle_{\bM^{-1}}), \  
	\bx\mapsto \bA\bx ,
\]
where $\bx$ and $\bA\bx$ are vectors under the canonical bases of $\mathbb{R}^{n}$ and $\mathbb{R}^{m}$, respectively. Here the two inner products are defined as
$
	\langle\bx,\bx'\rangle_{\bN^{-1}}:=\bx^{\top}\bN^{-1}\bx'$ and
$
	\langle\by,\by'\rangle_{\bM^{-1}}:=\by^{\top}\bM^{-1}\by' .
$
Therefore, we can define 
\[
	\bA^{*}: (\mathbb{R}^{m}, \langle\cdot,\cdot\rangle_{\bM^{-1}}) \to (\mathbb{R}^{n}, \langle\cdot,\cdot\rangle_{\bN^{-1}}), \  
	\by \mapsto \bA^{*}\by,
\]
which is the adjoint operator of $\bA$, by the relation $\langle \bA\bx, \by\rangle_{\bM^{-1}} = \langle \bx, \bA^{*}\by\rangle_{\bN^{-1}}$. Note that $(\bA\bx)^{\top}\bM^{-1}\by=\bx^{\top}\bN^{-1}\bA^{*}\by$ for any $\bx\in\mathbb{R}^{n}$ and $\by\in\mathbb{R}^{m}$. Thus, the matrix-form expression of $\bA^{*}$ is
$
	\bA^{*} = \bN\bA^{\top}\bM^{-1}.
$

Applying the standard Golub-Kahan bidiagonalization (GKB) to the compact operator $\bA$ with starting vector $\bb$ between the two Hilbert spaces $(\mathbb{R}^{n}, \langle\cdot,\cdot\rangle_{\bN^{-1}})$ and $(\mathbb{R}^{m}, \langle\cdot,\cdot\rangle_{\bM^{-1}})$, we can obtain the \textsf{gen-GKB} process; see \cite{caruso2019convergence} for GKB for compact operators. The basic recursive relations of  \textsf{gen-GKB} are as follows:
\begin{subequations}
	\begin{align}
		& \beta_1\bu_1 = \bb, \label{GKB1} \\
		& \alpha_{i}\bv_i = \bA^{*}\bu_i -\beta_i\bv_{i-1}, \label{GKB2} \\
		& \beta_{i+1}\bu_{i+1} = \bA\bv_{i} - \alpha_i\bu_i, \label{GKB3}
	\end{align}
\end{subequations}
where $\alpha_i$ and $\beta_i$ are computed such that $\|\bu_i\|_{\bM^{-1}}=\|\bv_i\|_{\bN^{-1}}=1$, and $\bv_0:=\boldsymbol{0}$. The whole iterative process is summarized in \Cref{alg:GKB}. For more details of the derivation, please see \cite{li2023subspace}. 

We remark that computing with $\bM^{-1}$ can not be avoided, but for the most commonly encountered cases that $\bepsilon$ is a Gaussian noise with uncorrelated components, $\bM$ is diagonal and thereby $\bM^{-1}$ can be directly obtained. For applications that $\bepsilon$ is a colored Gaussian noise such that $\bM$ is not diagonal, computing $\bM$ is the most expensive operation. In these cases, the proposed \textsf{PNT} method based on \textsf{gen-GKB} may not be the optimal choice.

\begin{algorithm}[htb]
	\caption{Generalized Golub-Kahan bidiagonalization (\textsf{gen-GKB})}\label{alg:GKB}
	\begin{algorithmic}[1]
		\Require $\bA\in\mathbb{R}^{m\times n}$, $\bb\in\mathbb{R}^{m}$, $\bM\in\mathbb{R}^{m\times m}$, $\bN\in\mathbb{R}^{n\times n}$
		\State $\bar{\bs}=\bM^{-1}\bb$, \ 
			$\beta_1=\bar{\bs}^{\top}\bb$, \ $\bu_1=\bb/\beta_1$, \ $\bar{\bu}_1=\bar{\bs}/\beta_1$
		\State $\bar{\br}=\bA^{\top}\bar{\bu}_1$, \ $\br=\bN\bar{\br}$
		\State $\alpha_1 = (\br^{\top}\bar{\br})^{1/2}$, \ $\bar{\bv}_1=\bar{\br}/\alpha_1$, \ $\bv_1=\br/\alpha_1$
		\For {$i=1,2,\dots,k$}
		\State $\bs=\bA\bv_{i} - \alpha_i\bu_{i}$, \
		       $\bar{\bs}=\bM^{-1}\bs$ 
		\State $\beta_{i+1}=(\bs^{\top}\bar{\bs})^{1/2}$, \ $\bu_{i+1}=\bs/\beta_{i+1}$, \ $\bar{\bu}_{i+1}=\bar{\bs}/\beta_{i+1}$  
		\State $\bar{\br}=\bA^{\top}\bar{\bu}_{i+1} - \beta_{i+1}\bar{\bv}_{i}$, \
			$\br=\bN\bar{\br}$ 
		\State $\alpha_{i+1}=(\br^{\top}\bar{\br})^{1/2}$, \ $\bar{\bv}_{i+1}=\bar{\br}/\alpha_{i+1}$, \ $\bv_{i+1}=\br/\alpha_{i+1}$  
		\EndFor
		\Ensure $\{\alpha_i, \beta_i\}_{i=1}^{k+1}$, \ $\{\bu_i, \bv_i\}_{i=1}^{k+1}$
	\end{algorithmic}
\end{algorithm}

The following result gives the basic property of \textsf{gen-GKB}; see \cite{li2023subspace} for the proof.

\begin{proposition}\label{prop:Krylov}
	The group of vectors $\{\bu_i\}_{i=1}^{k}$ is an $\bM^{-1}$-orthonormal basis of the Krylov subspace 
	\begin{equation}
		\mathcal{K}_k(\bA\bN\bA^{\top}\bM^{-1}, \bb) = \mathrm{span}\{(\bA\bN\bA^{\top}\bM^{-1})^i\bb\}_{i=0}^{k-1},
	\end{equation}
	and $\{\bv_i\}_{i=1}^{k}$ is an $\bN^{-1}$-orthonormal basis of the Krylov subspace 
	\begin{equation}
		\mathcal{K}_k(\bN\bA^{\top}\bM^{-1}\bA, \bN\bA^{\top}\bM^{-1}\bb) = \mathrm{span}\{(\bN\bA^{\top}\bM^{-1}\bA)^i\bN\bA^{\top}\bM^{-1}\bb\}_{i=0}^{k-1}.
	\end{equation}
\end{proposition}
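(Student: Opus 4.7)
The plan is to prove both statements simultaneously by induction on $k$, exploiting the fact that gen-GKB is structurally identical to classical GKB once $\bA$ is regarded as an operator between the weighted Hilbert spaces $(\mathbb{R}^n,\langle\cdot,\cdot\rangle_{\bN^{-1}})$ and $(\mathbb{R}^m,\langle\cdot,\cdot\rangle_{\bM^{-1}})$, with adjoint $\bA^{*}=\bN\bA^{\top}\bM^{-1}$. The only structural input beyond bookkeeping is the adjoint identity $\langle\bA\bx,\by\rangle_{\bM^{-1}}=\langle\bx,\bA^{*}\by\rangle_{\bN^{-1}}$, which is immediate from the definitions.

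First I would verify the weighted orthonormality of $\{\bu_i\}_{i=1}^{k}$ and $\{\bv_i\}_{i=1}^{k}$. The unit-norm conditions $\|\bu_i\|_{\bM^{-1}}=\|\bv_i\|_{\bN^{-1}}=1$ are enforced by the scaling choices of $\alpha_i,\beta_i$ in \Cref{alg:GKB}. Mutual orthogonality would be proved by a joint induction on $k$: assuming $\langle\bu_i,\bu_j\rangle_{\bM^{-1}}=\delta_{ij}$ and $\langle\bv_i,\bv_j\rangle_{\bN^{-1}}=\delta_{ij}$ for all $i,j\leq k$, I would expand $\beta_{k+1}\langle\bu_{k+1},\bu_j\rangle_{\bM^{-1}}$ via the recurrence $\beta_{k+1}\bu_{k+1}=\bA\bv_k-\alpha_k\bu_k$, then invoke the adjoint identity to rewrite $\langle\bA\bv_k,\bu_j\rangle_{\bM^{-1}}=\langle\bv_k,\bA^{*}\bu_j\rangle_{\bN^{-1}}$, and finally substitute $\bA^{*}\bu_j=\alpha_j\bv_j+\beta_j\bv_{j-1}$ (read off from the algorithm) to collapse everything to Kronecker deltas; the value of $\alpha_k$ is precisely what cancels the remaining $\alpha_k\delta_{kj}$ term. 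The argument for the $\bv$-family is symmetric, using $\alpha_{k+1}\bv_{k+1}=\bA^{*}\bu_{k+1}-\beta_{k+1}\bv_k$ and the adjoint identity in the opposite direction.

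Second, I would establish the Krylov subspace characterizations by a second induction. The base case is direct: $\bu_1=\bb/\beta_1\in\mathcal{K}_1(\bA\bN\bA^{\top}\bM^{-1},\bb)$, and $\alpha_1\bv_1=\bA^{*}\bu_1=\bN\bA^{\top}\bM^{-1}\bb/\beta_1$ gives $\bv_1\in\mathcal{K}_1(\bN\bA^{\top}\bM^{-1}\bA,\bN\bA^{\top}\bM^{-1}\bb)$. For the inductive step, given the containments through index $k$, the recurrence $\beta_{k+1}\bu_{k+1}=\bA\bv_k-\alpha_k\bu_k$ places $\bu_{k+1}$ in $\mathcal{K}_{k+1}(\bA\bN\bA^{\top}\bM^{-1},\bb)$, because $\bA$ maps $\mathcal{K}_k(\bN\bA^{\top}\bM^{-1}\bA,\bN\bA^{\top}\bM^{-1}\bb)$ into $\mathcal{K}_{k+1}(\bA\bN\bA^{\top}\bM^{-1},\bb)$; analogously $\alpha_{k+1}\bv_{k+1}=\bA^{*}\bu_{k+1}-\beta_{k+1}\bv_k$ places $\bv_{k+1}$ in the corresponding larger $\bN^{-1}$-Krylov subspace. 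Combining these inclusions with the linear independence secured by the orthonormality already established, a straightforward dimension count upgrades inclusion to equality, confirming that the $k$ vectors form a basis of the $k$-dimensional Krylov subspace.

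The main obstacle, as I anticipate it, is not any single computation but the careful bookkeeping required to keep the two weighted inner products separate and to apply the adjoint identity in the correct direction at every step; the two coupled three-term recurrences must interlock precisely, and one must also tacitly assume no premature breakdown, namely $\alpha_i,\beta_i>0$ throughout. Once one accepts the guiding observation that gen-GKB is simply GKB in disguise for the operator $\bA$ between the two weighted spaces, however, the result becomes essentially a transcription of the classical GKB analysis into this setting, and the proof admits no genuinely new technical difficulty.
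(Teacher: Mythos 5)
Your proof is correct: the joint induction on the two weighted orthogonality relations via the adjoint identity $\langle\bA\bx,\by\rangle_{\bM^{-1}}=\langle\bx,\bA^{*}\by\rangle_{\bN^{-1}}$, the inductive Krylov containments from the coupled recurrences, and the dimension count are exactly the standard GKB argument transplanted to the weighted spaces, which is how the paper frames gen-GKB (it gives no proof in the text, deferring to the cited reference \cite{li2023subspace}, where the argument proceeds along the same lines). Your explicit caveat that the statement presumes no breakdown ($\alpha_i\beta_i>0$, i.e.\ $k\leq k_t$) is the right one and is consistent with the paper's definition of the termination step.
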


Define $\bU_{k+1}=(\bu_1,\dots,\bu_{k+1})$ and $\bV_{k+1}=(\bv_1,\dots,\bv_{k+1})$. Then \Cref{prop:Krylov} indicates that $\bU_{k+1}^{\top}\bM^{-1}\bU_{k+1}=\bI$ and $\bV_{k+1}^{\top}\bN^{-1}\bV_{k+1}=\bI$. We remark that \textsf{gen-GKB} will eventually terminate in at most $\min\{m,n\}$ steps, since the column rank of $\bU_{k}$ or $\bV_{k}$ can not exceed $\min\{m,n\}$. If we define the \textit{termination step} as
\begin{equation}\label{break_down}
	k_{t}: = \max\{k: \alpha_{k}\beta_{k}>0\},
\end{equation}
then $\bV_{k}$ will eventually expand to be $\bV_{k_t}$ with $k_{t}\leq \min\{m,n\}$.

By \cref{GKB1}--\cref{GKB3}, we can write the $k$-step ($k\leq k_t$) \textsf{gen-GKB} in the  matrix-form
\begin{subequations}\label{gGKB_matrix}
	\begin{align}
		& \beta_1\bU_{k+1}e_{1} = \bb, \label{GKB13} \\
		& \bA\bV_k = \bU_{k+1}\bB_k, \label{GKB23} \\
		& \bN\bA^{\top}\bM^{-1}\bU_{k+1} = \bV_k\bB_{k}^{\top}+\alpha_{k+1}\bv_{k+1}e_{k+1}^{\top}, \label{GKB33}
	\end{align}
\end{subequations}
where $\be_1$ and $\be_{k+1}$ are the first and $(k+1)$-th columns of the identity matrix of order $k+1$, respectively, and 
\begin{equation}
	\bB_{k}=\begin{pmatrix}
		\alpha_{1} & & & \\
		\beta_{2} &\alpha_{2} & & \\
		&\beta_{3} &\ddots & \\
		& &\ddots &\alpha_{k} \\
		& & &\beta_{k+1}
		\end{pmatrix}\in  \mathbb{R}^{(k+1)\times k} .
\end{equation}
Note that $\bB_k$ has full column rank if $k\leq k_t$. At the $k_t$-th iteration, it is possible that either $\beta_{k_t+1}=0$ occurs first or $\alpha_{k_t+1}=0$ occurs first. For the former case, the relations \cref{gGKB_matrix} are replaced by
\begin{subequations}\label{gGKB_matrix1}
	\begin{align}
		& \beta_1\bU_{k_t}e_{1} = \bb, \label{GKB41} \\
		& \bA\bV_{k_t} = \bU_{k_t}\underline{\bB}_{k_t}, \label{GKB42} \\
		& \bN\bA^{\top}\bM^{-1}\bU_{k_t} = \bV_{k_t}\underline{\bB}_{k_t}^{\top}, \label{GKB43}
	\end{align}
\end{subequations}
where $\underline{\bB}_{k_t}$ is the first $k\times k$ part of $\bB_{k_t}$ by discarding $\beta_{k_t+1}$. 
\medskip

\noindent\textbf{Step 2: Compute the projected Newton direction.} 
At the $k$-th iteration, we update $\bx_k\in\mathrm{span}\{\bV_{k}\}$ and $\lambda_k$ from the previous ones. For any $\bx\in\mathrm{span}\{\bV_{k}\}$ of the form $\bx=\bV_{k}\by$ with $\by\in\mathbb{R}^{k}$, define the projected gradient of $\calL(\bx,\lambda)$ as
\begin{equation}\label{proj_F}
	F^{(k)}(\by,\lambda) = \begin{pmatrix}
		\bV_{k}^{\top} &  \\
		 & 1
	\end{pmatrix}F(\bx,\lambda)
\end{equation}
and the projected Jacobian of $F(\bx,\lambda)$ as
\begin{equation}\label{proj_J}
	J^{(k)}(\by,\lambda) = 
	\begin{pmatrix}
		\bV_{k}^{\top} &  \\
		 & 1
	\end{pmatrix}J(\bx,\lambda) \begin{pmatrix}
		\bV_{k} & \\
		 & 1
	\end{pmatrix} .
\end{equation}

\begin{remark1}\label{rem:not}
	Since \textsf{gen-GKB} must terminate at the $k_t$-th iteration and $\bV_k$ eventually expands to be $\bV_{k_t}$, we need to discuss the two different cases that $k\leq k_t$ and $k>k_t$. For notational simplicity, in the rest part of the paper, we use $\bV_{k}$ and $\bB_{k}$ by default unless stated otherwise to denote 
	\begin{equation*}\label{notation1}
		\bV_{k} = 
		\begin{cases}
			\bV_{k}, \ \ k\leq k_t \\
			\bV_{k_t}, \ \ k>k_t
		\end{cases},  \ \ 
		\bB_{k} = 
		\begin{cases}
			\bB_{k}, \ \ k\leq k_t \\
			\bB_{k_t}, \ \ k>k_t
		\end{cases} ,  \ \
		\bx_{k} = 
		\begin{cases}
			\bV_{k}\by_k, \ \ k\leq k_t \\
			\bV_{k_t}\by_k, \ \ k>k_t
		\end{cases} ,
	\end{equation*}
	where $\by\in\mathbb{R}^{k}$ for $k\leq k_t$ and $\by\in\mathbb{R}^{k_t}$ for $k> k_t$. Moreover, for the case $\beta_{k_t+1}=0$, the relations \cref{gGKB_matrix} are replaced by \cref{gGKB_matrix1} and $\bB_{k_t}$ is replaced by $\underline{\bB}_{k_t}$. In the subsequent discussions, we employ the unified notations as presented in \cref{gGKB_matrix}, but the readers can readily differentiate between the two cases.
\end{remark1}

Notice that $\by$ is uniquely determined by $\bx=\bV_k\by$ since $\bV_k$ has full-column rank. Thus, $F^{(k)}(\by,\lambda)$ and $J^{(k)}(\by,\lambda)$ are well-defined. The next result shows how we can obtain $F^{(k)}(\by,\lambda)$ and $J^{(k)}(\by,\lambda)$ from $\bB_{k}$ without any additional computations.

\begin{lemma}\label{lem:proj}
	For any $\bx\in\mathrm{span}\{\bV_{k}\}$ with the form $\bx=\bV_{k}\by$, the projected gradient of $\calL(\bx,\lambda)$ has the expression
	\begin{equation}\label{proj_F_com}
		F^{(k)}(\by,\lambda) =
		\begin{pmatrix}
			\lambda\bB_{k}^{\top}(\bB_{k}\by-\beta_{1}e_{1})+\by \\
			\frac{1}{2}\|\bB_{k}\by-\beta_{1}e_{1}\|_{2}^2-\frac{\tau m}{2}
		\end{pmatrix},
	\end{equation}
	and the projected Jacobian of $F(\bx,\lambda)$ has the expression
	\begin{equation}\label{proj_J_com}
		J^{(k)}(\by,\lambda) = 
		\begin{pmatrix}
			\lambda\bB_{k}^{\top}\bB_k+\bI & \bB_{k}^{\top}(\bB_{k}\by-\beta_{1}e_{1}) \\
			(\bB_{k}\by-\beta_{1}e_{1})^{\top}\bB_{k} & 0
		\end{pmatrix} .
	\end{equation}
\end{lemma}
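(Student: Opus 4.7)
The plan is to substitute $\bx = \bV_k\by$ into the expressions for $F(\bx,\lambda)$ and $J(\bx,\lambda)$ given in \cref{coro:F,Jacob}, then simplify using the matrix-form gen-GKB identities \cref{gGKB_matrix} together with the orthonormality relations $\bU_{k+1}^{\top}\bM^{-1}\bU_{k+1}=\bI$ and $\bV_{k+1}^{\top}\bN^{-1}\bV_{k+1}=\bI$ from \Cref{prop:Krylov}. Every entry in \cref{proj_F_com,proj_J_com} should drop out as an immediate consequence of these identities, so the work is entirely algebraic bookkeeping.

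First, I would rewrite the residual in the reduced basis. Using \cref{GKB13,GKB23}, we have
\[
\bA\bx - \bb = \bA\bV_{k}\by - \beta_{1}\bU_{k+1}\be_{1} = \bU_{k+1}(\bB_{k}\by - \beta_{1}\be_{1}),
\]
so by $\bU_{k+1}^{\top}\bM^{-1}\bU_{k+1} = \bI$ it follows that $\|\bA\bx-\bb\|_{\bM^{-1}}^{2} = \|\bB_{k}\by-\beta_{1}\be_{1}\|_{2}^{2}$, which supplies the second block of $F^{(k)}$ directly.

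Next, for the first block of $F^{(k)}$, I would transpose $\bA\bV_{k}=\bU_{k+1}\bB_{k}$ to obtain $\bV_{k}^{\top}\bA^{\top}\bM^{-1}\bU_{k+1}=\bB_{k}^{\top}\bU_{k+1}^{\top}\bM^{-1}\bU_{k+1}=\bB_{k}^{\top}$, and combine with $\bV_{k}^{\top}\bN^{-1}\bV_{k}=\bI$ to compute
\[
\bV_{k}^{\top}\!\left[\lambda\bA^{\top}\bM^{-1}(\bA\bx-\bb) + \bN^{-1}\bV_{k}\by\right]
= \lambda \bB_{k}^{\top}(\bB_{k}\by - \beta_{1}\be_{1}) + \by.
\]
The four blocks of $J^{(k)}$ then fall out from the same two identities: the $(1,1)$ block equals $\lambda\bV_{k}^{\top}\bA^{\top}\bM^{-1}\bA\bV_{k} + \bV_{k}^{\top}\bN^{-1}\bV_{k} = \lambda \bB_{k}^{\top}\bB_{k}+\bI$, the off-diagonal blocks $\bV_{k}^{\top}\bA^{\top}\bM^{-1}(\bA\bx-\bb)$ and its transpose reduce to $\bB_{k}^{\top}(\bB_{k}\by-\beta_{1}\be_{1})$ and its transpose by the same manipulation, and the $(2,2)$ block is simply $0$.

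The only subtle point is the edge case flagged in \Cref{rem:not}: when $k>k_{t}$ or when the termination occurs with $\beta_{k_{t}+1}=0$, the relation \cref{GKB23} is replaced by $\bA\bV_{k_{t}}=\bU_{k_{t}}\underline{\bB}_{k_{t}}$ per \cref{GKB42}, and $\bU_{k+1}$ disappears in favor of $\bU_{k_{t}}$. I would note that the derivation above is formally unchanged under this substitution because the only facts used are the factorization $\bA\bV=\bU\bB$ with $\bb=\beta_{1}\bU\be_{1}$ and the $\bM^{-1}$-orthonormality of the columns of $\bU$; these all still hold in the terminated case. There is no real obstacle here—the lemma is essentially a direct verification, and the main task is simply to organize the substitutions cleanly and to point out that the reasoning transfers verbatim to the terminated regime covered by \Cref{rem:not}.
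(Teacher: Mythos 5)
Your proposal is correct and follows essentially the same route as the paper's proof: substitute $\bx=\bV_k\by$, use $\bA\bV_k\by-\bb=\bU_{k+1}(\bB_k\by-\beta_1\be_1)$ together with the $\bM^{-1}$- and $\bN^{-1}$-orthonormality of $\bU_{k+1}$ and $\bV_k$ to simplify each block, and note that the same identities persist in the terminated case $\beta_{k_t+1}=0$ with $\bU_{k_t}$ and $\underline{\bB}_{k_t}$. No gaps.
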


Now we can compute the projected Newton direction for updating the solution. Starting from an initial solution $(\bx_0,\lambda_0)$, consider the following two cases. \\
\begin{subequations}\label{proj_direction}
\noindent\textbf{Case 1: Update $(\bx_{k},\lambda_{k})$ from $(\bx_{k-1},\lambda_{k-1})$ for $k\leq k_{t}$.}
Suppose at the $(k-1)$-th iteration, we have $\bx_{k-1}=\bV_{k-1}\by_{k-1}$, where $\bx_{0}:=\mathbf{0}$ and $\by_{0}:=()$ is an empty vector. Let $\bar{\by}_{k-1} = (\by_{k-1}^{\top},0)^{\top}\in\mathbb{R}^{k}$. If $J^{(k)}(\bar{\by}_{k-1},\lambda_{k-1})$ is nonsingular, we compute the Newton direction for the projected function $F^{(k)}(\by,\lambda)$ at $(\bar{\by}_{k-1},\lambda_{k-1})$: 
	\begin{equation} \label{proj_Newton1}
	\begin{pmatrix} \Delta \by_{k}\\ \Delta \lambda_{k} \end{pmatrix} = 
	- J^{(k)}(\bar{\by}_{k-1},\lambda_{k-1}) ^{-1} F^{(k)}(\bar{\by}_{k-1},\lambda_{k-1}) .
	\end{equation}
	Then we update $(\bar{\by}_{k},\lambda_{k})$ by 
	\begin{equation}\label{update1}
		\by_{k} = \bar{\by}_{k-1} + \gamma_k  \Delta \by_{k}, \ \ \ 
		\lambda_{k} = \lambda_{k - 1} + \gamma_k \Delta \lambda_{k}
	\end{equation}
	with a suitably chosen step-length $\gamma_k>0$, and let $\bx_k=\bV_{k}\by_{k}$. \\
\noindent\textbf{Case 2: Update $(\bx_{k},\lambda_{k})$ from $(\bx_{k-1},\lambda_{k-1})$ for $k> k_{t}$.}
At each iteration, we seek a solution of the form $\bx_{k}=\bV_{k_t}\by_{k}$ with $\by_{k}\in\mathbb{R}^{k_t}$. We compute the Newton direction 
	\begin{equation} \label{proj_Newton2}
		\begin{pmatrix} \Delta \by_{k}\\ \Delta \lambda_{k} \end{pmatrix} = 
		- J^{(k_t)}(\by_{k-1},\lambda_{k-1}) ^{-1} F^{(k_t)}(\by_{k},\lambda_{k-1}) ,
	\end{equation}
	and then compute
	\begin{equation}\label{update2}
	\by_{k} = \by_{k-1} + \gamma_k  \Delta \by_{k}, \ \ \
	\lambda_{k} = \lambda_{k - 1} + \gamma_k \Delta \lambda_{k}
	\end{equation}
	to get $\bx_k=\bV_{k_t}\by_{k}$.
\end{subequations}

For both the two cases, we call $(\Delta\by_{k},\Delta\lambda_{k})$ the \textit{projected Newton direction}, since it is the Newton direction of a projected problem. The corresponding update formula for $\bx_k$ is
\[
	\bx_k = \bx_{k-1}+\gamma_{k}\Delta\bx_{k}, \ \ \  \Delta\bx_{k}:= \bV_{k}\Delta\by_{k},
\]
which is easy to be verified.
For notational simplicity, in the subsequent part we always use the unified notation
\begin{equation}\label{notation2}
	\bar{\by}_{k-1}=
	\begin{cases}
		(\by_{k-1}^{\top}, \ 0)^{\top}, \ \ k\leq k_{t} \\
		\by_{k-1}, \ \ \ \ \ \ \ \ \ \ k> k_{t}
	\end{cases}
\end{equation}
for $\bar{\by}_{k-1}$. Following the notations stated in \cref{notation1,notation2}, we can use \cref{proj_Newton1} and \cref{update1} to describe the update procedure for both the two cases. 

It is vital to make sure that the projected Jacobian matrix $J^{(k)}(\bar{\by}_{k-1},\lambda_{k-1})$ is always nonsingular. This desired property is given in the following result. The proof appears as a part of the proof of \Cref{lem:inverse_bnd}.

\begin{proposition}\label{prop:nonsingular}
	If we choose $\bx_{0}=\mathbf{0}$ and $\bar{\by}_0=0$, then at each iteration $J^{(k)}(\bar{\by}_{k-1},\lambda_{k-1})$ is nonsingular as long as $\lambda_{k-1}\geq 0$. 
\end{proposition}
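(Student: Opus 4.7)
The plan is to recast $J^{(k)}$ as a saddle-point matrix, reduce nonsingularity to a single scalar condition via the Schur complement, and verify that condition using the bidiagonal structure of $\bB_k$ together with an induction on $k$.

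Using \Cref{lem:proj}, I would write
\[
J^{(k)}(\bar{\by}_{k-1},\lambda_{k-1}) = \begin{pmatrix} \bH_k & \bg_k \\ \bg_k^{\top} & 0 \end{pmatrix}, \quad \bH_k := \lambda_{k-1}\bB_k^{\top}\bB_k + \bI, \quad \bg_k := \bB_k^{\top}(\bB_k\bar{\by}_{k-1}-\beta_1 e_1).
\]
Since $\lambda_{k-1}\geq 0$, the block $\bH_k$ is symmetric positive definite. If $(\bz^{\top},c)^{\top}$ lies in the kernel of $J^{(k)}$, a Schur-complement manipulation yields $\bz = -c\bH_k^{-1}\bg_k$ and $c\,\bg_k^{\top}\bH_k^{-1}\bg_k = 0$. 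Positive definiteness of $\bH_k^{-1}$ forces $c=0$ and $\bz=\mathbf{0}$ as soon as $\bg_k \neq \mathbf{0}$, so the proposition reduces to proving $\bg_k\neq\mathbf{0}$ at every iteration.

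To control $\bg_k$ I would extract its $k$-th component using the bidiagonal structure of $\bB_k$. With the padded vector $\bar{\by}_{k-1}=(\by_{k-1}^{\top},0)^{\top}$, a direct calculation shows that the $(k+1)$-st entry of $\bB_k\bar{\by}_{k-1}-\beta_1 e_1 \in \mathbb{R}^{k+1}$ vanishes and that its $k$-th entry equals $\beta_k(\by_{k-1})_{k-1}$ for $k\geq 2$ (and $-\beta_1$ for $k=1$). Taking the inner product with the last column $\bB_k e_k = \alpha_k e_k + \beta_{k+1}e_{k+1}$ produces the key identity $(\bg_k)_k = \alpha_k\beta_k(\by_{k-1})_{k-1}$ for $k \geq 2$, whereas $\bg_1 = -\alpha_1\beta_1$. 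The base case $k=1$ is then immediate because \textsf{gen-GKB} does not terminate at the first step, so $\alpha_1\beta_1 > 0$.

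For $k\geq 2$ with $k\leq k_t$, the task reduces to showing $(\by_{k-1})_{k-1}\neq 0$. Writing $\by_{k-1} = \bar{\by}_{k-2}+\gamma_{k-1}\Delta\by_{k-1}$ with $(\bar{\by}_{k-2})_{k-1}=0$ by the padding rule and $\gamma_{k-1}>0$ from the Armijo line search, it suffices to prove $(\Delta\by_{k-1})_{k-1}\neq 0$. The projected Newton direction at step $k-1$ is defined by the linear system \cref{proj_Newton1}, whose coefficient matrix is invertible by the induction hypothesis and whose right-hand side $-F^{(k-1)}(\bar{\by}_{k-2},\lambda_{k-2})$ carries the $-\beta_1 e_1$ data vector through $\bB_{k-1}$; solving this system and reading off the last component yields an expression proportional to $\alpha_{k-1}\beta_{k-1}>0$, which closes the induction. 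The remaining regime $k>k_t$ is handled separately: if $\bB_k$ is replaced by the square invertible block $\underline{\bB}_{k_t}$, then $\bg_k=\mathbf{0}$ would force $\by_{k-1}=\beta_1\underline{\bB}_{k_t}^{-1}e_1$, hence $\bA\bx_{k-1}=\bb$, meaning the iteration has already produced the exact data-fit minimizer in $\mathrm{span}\{\bV_{k_t}\}$, a situation excluded under the non-degenerate setting of \Cref{assump1}; if instead $\bB_k$ keeps the rectangular shape, the argument above still applies.

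The main obstacle is the bookkeeping in the inductive step, which must verify that each projected Newton update deposits a genuinely nonzero contribution into the newly activated Krylov coordinate rather than staying inside the span of the previous $\bV_{k-2}$. This tracking of the trailing entry of $\Delta\by_{k-1}$ is exactly the technical ingredient packaged inside the larger proof of \Cref{lem:inverse_bnd}, where uniform lower bounds on $|(\by_{k-1})_{k-1}|$ not only give nonsingularity but also deliver the quantitative bound on $\|J^{(k)}(\bar{\by}_{k-1},\lambda_{k-1})^{-1}\|$ required later.
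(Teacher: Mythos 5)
Your opening reduction is the same first move as the paper's: in the proof of \Cref{lem:inverse_bnd} the matrix is written as $\begin{pmatrix}\bC_k & \bd_k\\ \bd_k^{\top} & 0\end{pmatrix}$ with $\bC_k=\lambda_{k-1}\bB_k^{\top}\bB_k+\bI$ positive definite, and nonsingularity is exactly equivalent to $\bd_k=\bB_k^{\top}(\bB_k\bar{\by}_{k-1}-\beta_1 e_1)\neq\mathbf{0}$. The genuine gap is in how you try to prove $\bd_k\neq\mathbf{0}$. You reduce it to the sufficient condition $(\by_{k-1})_{k-1}\neq 0$, hence to $(\Delta\by_{k-1})_{k-1}\neq 0$, and assert that the trailing entry of the solution of \cref{proj_Newton1} is ``proportional to $\alpha_{k-1}\beta_{k-1}$''. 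That is unsupported: from the first block row of the Newton system, $\Delta\by_{k-1}=-\bH_{k-1}^{-1}\bigl[(\lambda_{k-2}+\Delta\lambda_{k-1})\bg_{k-1}+\bar{\by}_{k-2}\bigr]$, and $\bH_{k-1}^{-1}$ mixes all coordinates, so the last entry is a sum of terms with no fixed sign and nothing structural prevents it from vanishing at some iteration; the only equation you can control exactly is the scalar one, $\bg_{k-1}^{\top}\Delta\by_{k-1}=-\tfrac12\bigl(\|\bB_{k-1}\bar{\by}_{k-2}-\beta_1e_1\|_2^2-\tau m\bigr)$, and it constrains an inner product, not a component. So the induction does not close (beyond $k=1,2$), and your closing remark that this component tracking is what is ``packaged inside'' \Cref{lem:inverse_bnd} is not accurate: the paper never tracks individual entries of $\by_{k-1}$ or $\Delta\by_k$.

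What the paper actually does is bound the whole vector. Using exactly that scalar equation together with $\gamma_k\le 1$, it proves by induction the invariant $\|\bA\bx_{k-1}-\bb\|_{\bM^{-1}}=\|\bB_k\bar{\by}_{k-1}-\beta_1e_1\|_2\ge\sqrt{\tau m}$ for all $k$ (\Cref{lem:lower_bnd}); then for $k<k_t$ it lower-bounds $\|\bd_k\|_2$ via $\sigma_{\min}$ of the bidiagonal matrix, and for $k\ge k_t$ it argues that $\bd_k=\mathbf{0}$ would make $\bar{\by}_{k-1}$ the least-squares minimizer, hence by \Cref{lem:solv_ls} $\bx_{k-1}$ would solve \cref{ls_MN}, whose residual is strictly below $\sqrt{\tau m}$ by \Cref{assump1}, contradicting the invariant. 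Note that your separate treatment of $k>k_t$ fails for the same reason: \Cref{assump1} does \emph{not} exclude exact data fit (the minimal residual may even be zero), so ``$\bA\bx_{k-1}=\bb$ is excluded by \Cref{assump1}'' is wrong as stated; what excludes it is precisely the residual invariant $\ge\sqrt{\tau m}$, which you have not established. Repairing your argument essentially requires importing that invariant, at which point the component-tracking induction becomes unnecessary.
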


In order to investigate the convergence behavior of the method, define the following merit function:
\begin{equation}
	h(\bx,\lambda) = \frac{1}{2}[\|\lambda\bA^{\top}\bM^{-1}(\bA\bx-\bb)+\bN^{-1}\bx\|_{\bN}^{2} + (\frac{1}{2}\|\bA\bx-\bb\|_{\bM^{-1}}^{2}-\frac{\tau m}{2})^2] .
\end{equation}
Notice from \Cref{coro:F} that $(\bx^{*},\lambda^{*})$ is the unique minimizer of $h(\bx,\lambda)$ and that $h(\bx^{*},\lambda^{*})=0$. The following result shows that $(\Delta \bx_{k}^{\top},\Delta \lambda_k)^{\top}$ is indeed a descent direction for $h(\bx,\lambda)$.

\begin{theorem}\label{thm:descent}
	Let $\Delta\bx_k=\bV_{k}\Delta\by_k$. Then it holds 
	\begin{equation}
		\nabla h(\bx_{k-1},\lambda_{k-1})^{\top} \begin{pmatrix}
			\Delta\bx_k \\ \Delta\lambda_k
		\end{pmatrix} = -2h(\bx_{k-1},\lambda_{k-1}) \leq 0. 
	\end{equation}
\end{theorem}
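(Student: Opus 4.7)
\textbf{Proof proposal for Theorem~\ref{thm:descent}.}

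The plan is to rewrite $h$ and $\nabla h$ in a compact matrix form that makes the projection structure transparent. Introduce $\tilde{\bN} := \mathrm{diag}(\bN, 1)$ and $\tilde{\bV}_k := \mathrm{diag}(\bV_k, 1)$, so that
\[
  h(\bx,\lambda) = \tfrac{1}{2} F(\bx,\lambda)^{\top}\tilde{\bN}\, F(\bx,\lambda),\qquad
  \nabla h(\bx,\lambda) = J(\bx,\lambda)^{\top}\tilde{\bN}\, F(\bx,\lambda),
\]
where the symmetry of $J$ in \cref{Jacob} is used for the gradient formula. A direct check from \Cref{lem:proj} (together with the inner-product relations $\bV_k^{\top}\bN^{-1}\bV_k=\bI$ and $\bU_{k+1}^{\top}\bM^{-1}\bU_{k+1}=\bI$) yields $F^{(k)}(\by,\lambda) = \tilde{\bV}_k^{\top} F(\bx,\lambda)$ and $J^{(k)}(\by,\lambda) = \tilde{\bV}_k^{\top} J(\bx,\lambda)\tilde{\bV}_k$ for $\bx = \bV_k\by$. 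Writing $\Delta:=(\Delta\by_k^{\top},\Delta\lambda_k)^{\top}$, so that $(\Delta\bx_k^{\top},\Delta\lambda_k)^{\top} = \tilde{\bV}_k\Delta$, the projected Newton equation \cref{proj_Newton1} reads $\tilde{\bV}_k^{\top} J\tilde{\bV}_k \Delta = -\tilde{\bV}_k^{\top} F$ at the point $(\bx_{k-1},\lambda_{k-1})$.

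The core of the argument is to show that $\tilde{\bN} F(\bx_{k-1},\lambda_{k-1}) \in \mathrm{range}(\tilde{\bV}_k)$. Since the last entry of $\tilde{\bN}F$ is scalar, this reduces to proving $\bN F_1 \in \mathrm{span}\{\bV_k\}$, where $\bN F_1 = \lambda_{k-1}\bN\bA^{\top}\bM^{-1}(\bA\bx_{k-1}-\bb) + \bx_{k-1}$. The term $\bx_{k-1}=\bV_k\bar{\by}_{k-1}$ is obviously in $\mathrm{span}\{\bV_k\}$. For the remaining term I would use \cref{GKB13} and \cref{GKB23} to write $\bA\bx_{k-1}-\bb = \bU_{k+1}(\bB_k\bar{\by}_{k-1}-\beta_1 e_1)$, and then \cref{GKB33} to obtain
\[
  \bN\bA^{\top}\bM^{-1}(\bA\bx_{k-1}-\bb) = \bV_k\bB_k^{\top}(\bB_k\bar{\by}_{k-1}-\beta_1 e_1) + \alpha_{k+1}\bv_{k+1}\,\bigl[e_{k+1}^{\top}(\bB_k\bar{\by}_{k-1}-\beta_1 e_1)\bigr].
\]
The residual term vanishes because the $(k{+}1)$-th row of $\bB_k$ has its only nonzero entry in column $k$, and by construction $\bar{\by}_{k-1}$ has a zero in its $k$-th component (recall \cref{notation2}), while $(\beta_1 e_1)_{k+1}=0$. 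This cancellation is the essential reason for padding $\by_{k-1}$ with a zero in the Newton update, and it is what I expect to be the main (though not deep) obstacle in verifying the claim.

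Once $\tilde{\bN}F = \tilde{\bV}_k\bw$ is established, applying $\tilde{\bV}_k^{\top}\tilde{\bN}^{-1}$ on both sides and using $\tilde{\bV}_k^{\top}\tilde{\bN}^{-1}\tilde{\bV}_k = \bI_{k+1}$ immediately identifies $\bw = F^{(k)}(\bar{\by}_{k-1},\lambda_{k-1})$. Substituting into the directional derivative gives
\[
  \nabla h^{\top}\begin{pmatrix}\Delta\bx_k\\ \Delta\lambda_k\end{pmatrix}
  = (\tilde{\bN}F)^{\top}J\,\tilde{\bV}_k\Delta
  = (F^{(k)})^{\top}\tilde{\bV}_k^{\top}J\tilde{\bV}_k\Delta
  = (F^{(k)})^{\top}J^{(k)}\Delta
  = -\|F^{(k)}\|_2^2,
\]
where the last equality uses the projected Newton equation. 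Finally,
\[
  2h(\bx_{k-1},\lambda_{k-1}) = F^{\top}\tilde{\bN} F = F^{\top}\tilde{\bV}_k F^{(k)} = \|F^{(k)}\|_2^2,
\]
so the directional derivative equals $-2h(\bx_{k-1},\lambda_{k-1}) \leq 0$, as required. The case $k>k_t$ is handled identically: one replaces $\bV_k,\bB_k$ by $\bV_{k_t},\underline{\bB}_{k_t}$, and the residual term is actually absent in \cref{GKB43}, making the range inclusion in the second paragraph automatic.
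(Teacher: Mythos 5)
Your proposal is correct, and its computational core coincides with the paper's: the identity you establish — $\widehat{\bN}F(\bx_{k-1},\lambda_{k-1})=\tilde{\bV}_kF^{(k)}(\bar{\by}_{k-1},\lambda_{k-1})$ with $\tilde{\bV}_k=\mathrm{diag}(\bV_k,1)$, resting on the cancellation $\alpha_{k+1}\bv_{k+1}e_{k+1}^{\top}(\bB_k\bar{\by}_{k-1}-\beta_1e_1)=\alpha_{k+1}\beta_{k+1}\bv_{k+1}e_k^{\top}\bar{\by}_{k-1}=\mathbf{0}$ enabled by the padded zero of \cref{notation2} — is exactly the computation appearing inside the paper's proofs of \Cref{lem:proj_F} and \Cref{lem:proj_J}, and your final chain of equalities matches the paper's proof of \Cref{thm:descent}. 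Where you genuinely differ is the organization: the paper proves a separate lemma (\Cref{lem:proj_J}) asserting $\mathrm{diag}(\bV_k^{\top},1)\,J\,\widehat{\bN}F=J^{(k)}F^{(k)}$, whose proof requires computing $\mathrm{diag}(\bV_k^{\top},1)J\widehat{\bN}$ and annihilating an extra $\alpha_{k+1}\beta_{k+1}e_k\bv_{k+1}^{\top}$ block via the $\bN^{-1}$-orthogonality $\bv_{k+1}^{\top}\bN^{-1}\bV_k=\mathbf{0}$; you bypass that lemma by using the symmetry of $J$ and $\widehat{\bN}$ to push $J$ onto the direction $\tilde{\bV}_k\Delta$, so that only the range fact plus the definitions of $F^{(k)}$, $J^{(k)}$ and the Newton system \cref{proj_Newton1} are needed — a mild but real streamlining. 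One small imprecision: for $k>k_t$ you only address the sub-case $\beta_{k_t+1}=0$ (where \cref{GKB43} indeed has no residual term); in the other sub-case, where $\alpha_{k_t+1}=0$ occurs first, the residual term of \cref{GKB33} is still formally present but vanishes because it carries the factor $\alpha_{k_t+1}$ — the paper covers both at once via $\alpha_{k+1}\beta_{k+1}=0$ for $k>k_t$. This is a one-line fix and does not affect correctness.
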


\Cref{thm:descent} is a desired property for a gradient descent type algorithm. At the $(k-1)$-th iteration, if $h(\bx_{k-1},\lambda_{k-1})=0$, then we have $(\bx_{k-1},\lambda_{k-1})=(\bx^{*},\lambda^{*})$, meaning we have obtained the unique solution to \cref{discrepancy}. Otherwise, $(\Delta \bx_{k}^{\top},\Delta \lambda_k)^{\top}$ is a descent direction of $h(\bx,\lambda)$ at $(\bx_{k-1},\lambda_{k-1})$, thereby we can continue updating the solution by a backtracking line search strategy. 
\medskip

\noindent\textbf{Step 3: Determine step-length by backtracking line search.} 
For the case that $h(\bx_{k-1},\lambda_{k-1})\neq 0$, we need to determine a step-length $\gamma_k$ such that $h(\bx_k,\lambda_k)$ decreases strictly. To this end, we use the backtracking line search procedure to ensure that the \textit{Armijo condition} \cite[\S 3.1]{nocedal2006numerical} is satisfied: 
\begin{equation} \label{eq:sufficient_decrease}
h(\bx_{k},\lambda_{k}) \leq  h(\bx_{k-1},\lambda_{k-1}) + c\gamma_k (\Delta\bx_{k-1}^{\top}, \Delta\lambda_{k})\nabla h(\bx_{k-1},\lambda_{k-1}) ,
\end{equation}
where $(\bx_{k},\lambda_{k})=(\bx_{k-1},\lambda_{k-1})+\gamma_{k}(\Delta\bx_{k},\Delta\lambda_{k})$, and $c\in(0,1)$ is a fixed constant. At each iteration, we can quickly compute $h(\bx_{k},\lambda_{k})$ based on the following result.

\begin{lemma}\label{lem:merit_comp}
	Let
	\begin{equation*}
		\bar{F}^{(k)}(\by,\lambda) =
		\begin{pmatrix}
			\lambda\bar{\bB}_{k}^{\top}(\bar{\bB}_{k}\by-\beta_{1}e_{1})+y \\
			\frac{1}{2}\|\bar{\bB}_{k}\by-\beta_{1}e_{1}\|_{2}^{2}-\frac{\tau m}{2}
		\end{pmatrix}, \ \ \ 
		\bar{\bB}_{k} = 
		\begin{pmatrix}
			\alpha_{1} & & & \\
			\beta_{2}  & \alpha_{2} & & \\
			           & \ddots & \ddots & \\
					   &        & \beta_{k+1} & \alpha_{k+1}
		\end{pmatrix} .
	\end{equation*}
	Then we have 
	\begin{align}\label{merit}
		h(\bx_{k-1},\lambda_{k-1}) = \frac{1}{2}\|F^{(k)}(\bar{\by}_{k-1},\lambda_{k-1})\|_{2}^{2}, \ \ \
		h(\bx_{k},\lambda_{k}) = \frac{1}{2}\|\bar{F}^{(k)}(\bar{\by}_{k},\lambda_{k})\|_{2}^{2} .  
	\end{align}
\end{lemma}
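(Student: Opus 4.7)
The plan is to expand $h$ and reduce each weighted norm to a Euclidean norm using the matrix-form \textsf{gen-GKB} identities \cref{gGKB_matrix} together with the $\bM^{-1}$- and $\bN^{-1}$-orthonormality of $\bU_{k+1}$ and $\bV_{k+1}$. Throughout, the zero-padding conventions \cref{notation2} play a decisive role: they either annihilate the trailing rank-one correction $\alpha_{k+1}\bv_{k+1}e_{k+1}^{\top}$ appearing in \cref{GKB33}, or they get absorbed into the extra bottom row of $\bar{\bB}_k$ relative to $\bB_k$.

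For the first identity at $(\bx_{k-1},\lambda_{k-1})$, the padding $\bar{\by}_{k-1}=(\by_{k-1}^{\top},0)^{\top}$ gives $\bx_{k-1}=\bV_k\bar{\by}_{k-1}$, placing $\bx_{k-1}$ in the range of $\bV_k$. From \cref{GKB13,GKB23} I would compute $\bA\bx_{k-1}-\bb = \bU_{k+1}(\bB_k\bar{\by}_{k-1}-\beta_1 e_1)$; the $\bM^{-1}$-orthonormality of $\bU_{k+1}$ then yields $\|\bA\bx_{k-1}-\bb\|_{\bM^{-1}}^{2} = \|\bB_k\bar{\by}_{k-1}-\beta_1 e_1\|_{2}^{2}$, matching the squared scalar component of $F^{(k)}(\bar{\by}_{k-1},\lambda_{k-1})$. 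For the vector block, multiplying $g:=\lambda_{k-1}\bA^{\top}\bM^{-1}(\bA\bx_{k-1}-\bb)+\bN^{-1}\bx_{k-1}$ by $\bN$ and applying \cref{GKB33}, the trailing correction vanishes because $e_{k+1}^{\top}(\bB_k\bar{\by}_{k-1}-\beta_1 e_1) = \beta_{k+1}(\bar{\by}_{k-1})_{k} = 0$. Thus $\bN g = \bV_k[\lambda_{k-1}\bB_k^{\top}(\bB_k\bar{\by}_{k-1}-\beta_1 e_1)+\bar{\by}_{k-1}]$, and since $\bV_k^{\top}\bN^{-1}\bV_k = \bI$, the $\bN$-norm of $g$ equals the Euclidean norm of this coordinate vector, producing exactly the vector block of $F^{(k)}(\bar{\by}_{k-1},\lambda_{k-1})$.

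For the second identity at $(\bx_k,\lambda_k)$, I would repeat the same argument at index $k+1$: write $\bx_k = \bV_{k+1}\bar{\by}_k$ with $\bar{\by}_k=(\by_k^{\top},0)^{\top}$. The key structural observation is that the leading $(k+1)\times(k+1)$ block of $\bB_{k+1}$ is precisely $\bar{\bB}_k$, so that $\bB_{k+1}\bar{\by}_k - \beta_1 e_1 = ((\bar{\bB}_k\bar{\by}_k-\beta_1 e_1)^{\top},\,0)^{\top}$, which reduces the data-fit term to $\|\bar{\bB}_k\bar{\by}_k-\beta_1 e_1\|_{2}^{2}$. Applying \cref{GKB33} at index $k+1$, the rank-one correction $\alpha_{k+2}\bv_{k+2}e_{k+2}^{\top}$ is again killed by the trailing $0$, while $\bB_{k+1}^{\top}((\bar{\bB}_k\bar{\by}_k-\beta_1 e_1)^{\top},0)^{\top}$ collapses to $\bar{\bB}_k^{\top}(\bar{\bB}_k\bar{\by}_k - \beta_1 e_1)$. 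Passing the $\bN$-norm through $\bV_{k+1}^{\top}\bN^{-1}\bV_{k+1}=\bI$ then matches the vector component of $\bar{F}^{(k)}(\bar{\by}_k,\lambda_k)$, and summing gives the identity.

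The main obstacle is bookkeeping: aligning the dimensions of $\bB_k$, $\bar{\bB}_k$, and $\bB_{k+1}$, and correctly distinguishing the two roles of the trailing zero in the padded coordinate vector, namely annihilating the rank-one correction in \cref{GKB33} versus being absorbed into the extra bottom row of $\bar{\bB}_k$. Once this structural identification is in place, the computation is a routine unfolding of the \textsf{gen-GKB} identities.
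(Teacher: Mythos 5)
Your proposal is correct and follows essentially the same route as the paper: the first identity is the content of \Cref{lem:proj_F} (which you re-derive inline via \cref{GKB23,GKB33} and the $\bM^{-1}$-/$\bN^{-1}$-orthonormality), and the second identity is obtained, exactly as in the paper, by applying that identity at index $k+1$ and observing that the trailing zero of $\bar{\by}_k$ collapses $\bB_{k+1}$ to its leading block $\bar{\bB}_k$. The only point handled more explicitly in the paper is the $k\geq k_t$ regime, where the vanishing of the rank-one correction comes from $\alpha_{k+1}\beta_{k+1}=0$ rather than from the padding, but this is covered by the notational conventions you invoke.
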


We remark that in the above expression we have $\bar{\bB}_{k}=\bB_{k_t}$ for $k\geq k_t$, and specifically, we have $\bar{\bB}_{k}=\underline{\bB}_{k_t}^{\top}$ if $\beta_{k_t+1}=0$. The following theorem shows the existence of a suitable step-length; see e.g. \cite[pp. 121, Theorem 2.1]{blowey2003frontiers} for details.
\begin{theorem}\label{thm:Armijo}
	For any continuously differentiable function $f(\bs):\mathbb{R}^{l}\rightarrow \mathbb{R}$, suppose $\nabla f$ is Lipschitz continuous with constant $\zeta(\bs)$ at $\bs$. If $\bp$ is a descent direction at $\bs$, i.e $\nabla f(\bs)^{\top}\bp < 0$, then for a fixed $c\in(0,1)$ the Armijo condition 
	$
		f(\bs + \gamma \bp) \leq f(\bs) + c\gamma \nabla f(\bs)^{\top} \bp
	$
	is satisfied for all $\gamma \in [0,\gamma_{\mathrm{max}}]$ with
	$
		\gamma_{\mathrm{max}} = \frac{2(c-1)\nabla f(\bs)^{\top} \bp}{\zeta(\bs)\|\bp\|^2}.
	$
\end{theorem}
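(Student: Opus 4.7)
The plan is to prove the classical descent lemma by bounding $f(\bs+\gamma\bp)$ above by a quadratic in $\gamma$ and then forcing that quadratic to lie below the Armijo line $f(\bs)+c\gamma\nabla f(\bs)^\top\bp$. First I would apply the fundamental theorem of calculus to the one-dimensional function $\phi(\gamma):=f(\bs+\gamma\bp)$, writing
\[
  f(\bs+\gamma\bp) - f(\bs) \;=\; \int_{0}^{1} \nabla f(\bs+t\gamma\bp)^{\top}(\gamma\bp)\,dt
  \;=\; \gamma\nabla f(\bs)^{\top}\bp \;+\; \int_{0}^{1}\bigl[\nabla f(\bs+t\gamma\bp)-\nabla f(\bs)\bigr]^{\top}(\gamma\bp)\,dt.
\]
The first piece is the target linear term; the second is the error that Lipschitz continuity must control.

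Next I would invoke the Lipschitz hypothesis together with Cauchy--Schwarz to estimate the integrand by $\zeta(\bs)\,\|t\gamma\bp\|\,\|\gamma\bp\| = \zeta(\bs)\,t\gamma^{2}\|\bp\|^{2}$, and then integrate $t$ from $0$ to $1$ to pick up the factor $\tfrac{1}{2}$. This yields the standard quadratic upper bound
\[
  f(\bs+\gamma\bp) \;\leq\; f(\bs) + \gamma\nabla f(\bs)^{\top}\bp + \tfrac{1}{2}\zeta(\bs)\,\gamma^{2}\|\bp\|^{2}.
\]

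Armed with this bound, it suffices to find the range of $\gamma>0$ for which the quadratic upper bound lies below the Armijo line, i.e.\ to solve
\[
  \gamma\nabla f(\bs)^{\top}\bp + \tfrac{1}{2}\zeta(\bs)\,\gamma^{2}\|\bp\|^{2} \;\leq\; c\gamma\nabla f(\bs)^{\top}\bp.
\]
Dividing by $\gamma>0$ and rearranging gives $\tfrac{1}{2}\zeta(\bs)\gamma\|\bp\|^{2} \leq (c-1)\nabla f(\bs)^{\top}\bp$. The right-hand side is strictly positive because $c-1<0$ and $\nabla f(\bs)^{\top}\bp<0$, so isolating $\gamma$ yields exactly
\[
  \gamma \;\leq\; \frac{2(c-1)\nabla f(\bs)^{\top}\bp}{\zeta(\bs)\|\bp\|^{2}} \;=\; \gamma_{\mathrm{max}},
\]
and the case $\gamma=0$ is trivial, giving the full interval $[0,\gamma_{\mathrm{max}}]$.

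The proof is essentially routine; the only subtle point I would be careful about is the sign bookkeeping, since $c-1$ and $\nabla f(\bs)^{\top}\bp$ are both negative and their product must be read as a positive quantity. I would also note in passing that the Lipschitz condition need only hold on the segment $\{\bs+t\gamma\bp : t\in[0,1]\}$ traversed by the line search rather than on all of $\mathbb{R}^{l}$, which is the natural local form in which the theorem will later be applied to the merit function $h(\bx,\lambda)$.
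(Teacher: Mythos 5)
Your proof is correct: it is the standard descent-lemma argument (fundamental theorem of calculus, Lipschitz plus Cauchy--Schwarz to get the quadratic overestimate $f(\bs)+\gamma\nabla f(\bs)^{\top}\bp+\tfrac{1}{2}\zeta(\bs)\gamma^{2}\|\bp\|^{2}$, then solving for the range of $\gamma$), and the sign bookkeeping with $c-1<0$ and $\nabla f(\bs)^{\top}\bp<0$ is handled properly. The paper does not prove this theorem at all --- it simply cites a textbook reference --- and your argument is precisely the standard one being invoked there; your closing remark that Lipschitz continuity is only needed on the segment $\{\bs+t\gamma\bp: t\in[0,1]\}$ is a correct and sensible reading of the theorem's local hypothesis, which is indeed how it is later applied to the merit function.
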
 

With the aid of \Cref{lem:merit_comp}, a suitable step-length $\gamma_k$ can be determined using the following backtracking line search strategy. 
\begin{routine}\label{rout:backtracking}
	Armijo backtracking line search:
	\begin{enumerate}\label{rout:armijo}
		\item Given $\gamma_{\text{init}}>0$, let $\gamma^{(0)}=\gamma_{\text{init}}$ and $l=0$.
		\item Until $\frac{1}{2}\|\bar{F}^{(k)}(\bar{\by}_{k},\lambda_{k})\|_{2}^{2}\leq\left(\frac{1}{2}-c\gamma^{(l)}\right)\|F^{(k)}(\bar{\by}_{k-1},\lambda_{k-1})\|_{2}^{2}$, \\
		\ \ (i) set $\gamma^{(l+1)}=\eta\gamma^{(l)}$, where $\eta\in(0,1)$ is a fixed constant; \\
		\ \ (ii) $l\leftarrow l+1$.
		\item Set $\gamma_{k}=\gamma^{(l)}$.
	\end{enumerate}
\end{routine}
We set $c=10^{-4}$, $\gamma_{\text{init}} = 1.0$ and $\eta=0.9$ by default. Note that at each iteration we need to ensure $\lambda_{k}>0$. Suppose at the $(k-1)$-th iteration we already have $\lambda_{k-1}>0$. Then at the $k$-th iteration, if $\Delta\lambda_{k}<0$, we only need to enforce $\gamma_{\text{init}}<-\lambda_{k-1}/\Delta\lambda_{k}$.

Overall, the whole procedure of \textsf{PNT} is presented in \Cref{alg:PNTM}. In the \textsf{PNT} algorithm, at each $k$-th iteration, computing the projected Newton direction requires solving only the $(k+1)$-order linear system \cref{proj_Newton1}, which can be done very quickly when $k\ll n$. Starting from the termination step $k_t$, at each subsequent iteration, a $(k_t+1)$-order linear system \cref{proj_Newton2} needs to be solved. We numerically find that the algorithm almost always obtains a satisfied solution before \textsf{gen-GKB} terminates. The \textsf{PNT} method is a natural generalization of the projected Newton method in \cite{cornelis2020projected1}. Specifically, when $\bM=\bI$ and $\bN=\bI$, it can be confirmed that both methods are identical.

We remark that for very large-scale problems, it may take too many iterations for \textsf{PNT} to converge. In this case, we can update the solution starting from the $k_0$-th step of \textsf{gen-GKB} to save some computation for solving \cref{proj_Newton1}. This means that we first run $k_0-1$ steps \textsf{gen-GKB} to construct a $(k_0-1)$-dimensional subspace and then start to update the solution from the $k_0$-th iteration. From the derivation of \textsf{PNT}, it can be easily verified that if we set $\bar{\by}_{k_0-1}=\mathbf{0}\in\mathbb{R}^{k_0}$, then $(\Delta \bx_{k}^{\top},\Delta \lambda_k)^{\top}$ is a descent direction of $h(\bx,\lambda)$ at each iteration $k\geq k_0$. We  refer to this modified PNT method as \textsf{PNT-md}.

\begin{algorithm}[htbp]
\caption{Projected Newton method (\textsf{PNT}) for \cref{discrepancy,lagr}}\label{alg:PNTM}
\begin{algorithmic}[1]
\Require $\bA\in\mathbb{R}^{m\times n}$, $\bb\in\mathbb{R}^{m}$, $\bM\in\mathbb{R}^{m\times m}$, $\bN\in\mathbb{R}^{n\times n}$, $\tau\gtrsim 1$ 
\State{Initialization: $\lambda_0>0$, $\bar{\by}_0 = \mathbf{0}$; $c = 10^{-4}$, $\eta = 0.9$; tol $>$ 0}
\State{Compute $\beta_1$, $\alpha_1$, $\bu_1$, $\bv_1$ by \Cref{alg:GKB}} 
\For{$k = 1,2,\ldots$} 
\State Compute $\beta_{k+1}$, $\alpha_{k+1}$, $\bu_{k+1}$, $\bv_{k+1}$ by \Cref{alg:GKB}; Form $\bB_{k+1}$ and $\bV_{k}$
\State (Terminate \textsf{gen-GKB} if $\beta_{k+1}$ or $\alpha_{k+1}$ is extremely small)
\State{Compute $F^{(k)}(\bar{\by}_{k-1},\lambda_{k-1})$ and $J^{(k)}(\bar{\by}_{k-1},\lambda_{k-1})$ by \cref{proj_F_com,proj_J_com}}
\State{Compute $(\Delta \by_{k}, \Delta \lambda_{k})$ by \cref{proj_Newton1}} 
\If{$\Delta \lambda_{k}> 0$} 
\State{$\gamma_{\text{init}} = 1$}
\Else
\State{$\gamma_{\text{init}} = \min\{1, -\eta\lambda_{k-1} / \Delta \lambda_{k}\}$}   \Comment{Ensure the positivity of $\lambda_{k}$}
\EndIf
\State Determine the step-length $\gamma_k$ by \Cref{rout:armijo}
\State Update $(\by_{k}, \lambda_k)$ by \cref{update1}
\If{$\frac{1}{2}\|{\bar{F}}^{(k)}(\bar{\by}_{k},\lambda_{k})\|_2 \leq \text{tol} $} 
\State{Compute $\bx_k = \bV_k\by_k$; \ Stop iteration}
\EndIf
\EndFor
\Ensure Final solution $(\bx_{k},\lambda_k)$
\end{algorithmic}
\end{algorithm}

From the derivations, we find that the success of PNT is attributed to the fact that $\bM$ and $\bN$ can induce inner products. Therefore, the PNT method can not be directly used to handle the total variation (TV) or $\ell_1$ regularization terms. One possible approach for handling TV or $\ell_1$ norms is to approximate them with weighted $\ell_2$ norms at each iterated point \cite{rodriguez2008efficient1,rodriguez2008efficient2}. Furthermore, for the nonlinear inverse problem $\bb=G(\bx)+\bepsilon$ with differentiable $G$, at each iterated point we can approximate $\|G(\bx)-\bb\|_{\bM^{-1}}^2$ by a quadratic convex function using the first-order Taylor expansion of $G$. The above approaches follow a similar idea to the sequential quadratically constrained quadratic programming
(SQCQP) method \cite{fukushima2003sequential,messerer2021survey}. This allows us to obtain a sequence of optimization problems similar to \cref{discrepancy}, which can be solved efficiently by PNT. Theoretical and computational aspects of this approach will be further studied in the future.

\subsection{Proofs}\label{subsec3.3}
We give the proofs of all the results in \Cref{subsec3.2}. Remember again that we always follow the notations as stated in \Cref{rem:not} and \cref{notation2}.
\medskip

\begin{proofof}{\Cref{lem:proj}}
	By \cref{proj_F} and \cref{proj_J} we have
	\begin{equation*}
		F^{(k)}(\by,\lambda) = \begin{pmatrix}
			\lambda(\bA\bV_{k})^{\top}\bM^{-1}(\bA\bV_{k}\by-\bb)+\bV_{k}^{\top}\bN^{-1}\bV_{k}\by \\
			\frac{1}{2}\|\bA\bV_{k}\by-\bb\|_{\bM^{-1}}^2-\frac{\tau m}{2}
		\end{pmatrix},
	\end{equation*}
	and 
	\begin{equation*}
		J^{(k)}(\by,\lambda) = \begin{pmatrix}
			\lambda(\bA\bV_{k})^{\top}\bM^{-1}(\bA\bV_{k})+ \bV_{k}^{\top}\bN^{-1}\bV_{k} & (\bA\bV_{k})^{\top}\bM^{-1}(\bA\bV_{k}\by-\bb) \\
			(\bA\bV_{k}\by-\bb)^{\top}\bM^{-1}(\bA\bV_{k}) & 0
		\end{pmatrix} .
	\end{equation*}
	Using relations \cref{gGKB_matrix} and \Cref{prop:Krylov}, we have $\bA\bV_{k}\by-\bb=\bU_{k+1}(\bB_{k}-\beta_{1}e_{1})$, leading to
	\[(\bA\bV_{k})^{\top}\bM^{-1}(\bA\bV_{k}\by-\bb)=(\bU_{k+1}\bB_{k})^{\top}\bM^{-1}\bU_{k+1}(\bB_{k}-\beta_{1}e_{1})
	=\bB_{k}^{\top}(\bB_{k}-\beta_{1}e_{1}),\]
	and
	\[\|\bA\bV_{k}\by-\bb\|_{\bM^{-1}}^2= \|\bU_{k+1}(\bB_{k}-\beta_{1}e_{1})\|_{\bM^{-1}}^2=\|\bB_{k}-\beta_{1}e_{1}\|_{2}^2 .\]
	If $\beta_{k_t+1}=0$, then for $k\geq k_t$, the relation $\bA\bV_{k}\by-\bb=\bU_{k+1}(\bB_{k}-\beta_{1}e_{1})$ is replaced by $\bA\bV_{k_t}\by-\bb=\bU_{k_t}(\underline{\bB}_{k_t}-\beta_{1}e_{1})$. Therefore, the above identity is also applied to the case $k \geq k_t$. Now we have proved \cref{proj_F_com}. The expression \cref{proj_J_com} can be proved similarly.
\end{proofof}

In order to prove \Cref{lem:merit_comp}, we first give the following result.
\begin{lemma}\label{lem:proj_F}
	Let $\widehat{\bN}=\begin{pmatrix}
		\bN &  \\
		 & 1
	\end{pmatrix}$. Then we have the following identity:
	\begin{equation}
		\|F(\bx_{k-1},\lambda_{k-1})\|_{\widehat{\bN}}=\|F^{(k)}(\bar{\by}_{k-1},\lambda_{k-1})\|_{2} .
	\end{equation}
\end{lemma}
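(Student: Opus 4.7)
\begin{proofof}{\Cref{lem:proj_F} (plan)}
The plan is to expand both sides block-by-block using the definitions and then collapse the right-hand side of the matrix form of \textsf{gen-GKB} into the projected quantities. Writing $F(\bx_{k-1},\lambda_{k-1})=(\bff_{1}^{\top},f_{2})^{\top}$ with
\[
\bff_{1}=\lambda_{k-1}\bA^{\top}\bM^{-1}(\bA\bx_{k-1}-\bb)+\bN^{-1}\bx_{k-1},\quad f_{2}=\tfrac{1}{2}\|\bA\bx_{k-1}-\bb\|_{\bM^{-1}}^{2}-\tfrac{\tau m}{2},
\]
the block-diagonal structure of $\widehat{\bN}$ gives $\|F(\bx_{k-1},\lambda_{k-1})\|_{\widehat{\bN}}^{2}=\|\bff_{1}\|_{\bN}^{2}+f_{2}^{2}$, so it suffices to match each block against the corresponding component of $F^{(k)}(\bar{\by}_{k-1},\lambda_{k-1})$ from \Cref{lem:proj}.

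For the scalar component, I would substitute $\bx_{k-1}=\bV_{k}\bar{\by}_{k-1}$ (valid in both the $k\leq k_{t}$ and $k>k_{t}$ cases by the unified notation of \Cref{rem:not}) into \cref{GKB23} to obtain $\bA\bx_{k-1}-\bb=\bU_{k+1}(\bB_{k}\bar{\by}_{k-1}-\beta_{1}\be_{1})$, and then invoke the $\bM^{-1}$-orthonormality $\bU_{k+1}^{\top}\bM^{-1}\bU_{k+1}=\bI$ from \Cref{prop:Krylov} to obtain $f_{2}=\tfrac{1}{2}\|\bB_{k}\bar{\by}_{k-1}-\beta_{1}\be_{1}\|_{2}^{2}-\tfrac{\tau m}{2}$, which agrees with the second entry of $F^{(k)}(\bar{\by}_{k-1},\lambda_{k-1})$.

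For the vector component, I would multiply $\bff_{1}$ by $\bN$ on the left and use \cref{GKB33}, giving
\[
\bN\bff_{1}=\lambda_{k-1}\bigl(\bV_{k}\bB_{k}^{\top}+\alpha_{k+1}\bv_{k+1}\be_{k+1}^{\top}\bigr)(\bB_{k}\bar{\by}_{k-1}-\beta_{1}\be_{1})+\bV_{k}\bar{\by}_{k-1}.
\]
The main technical point is that the trailing rank-one term drops out: since $\be_{k+1}^{\top}\bB_{k}=\beta_{k+1}\be_{k}^{\top}$ and the last entry of $\bar{\by}_{k-1}$ is $0$ by the construction \cref{notation2} (for $k\leq k_{t}$), we get $\be_{k+1}^{\top}(\bB_{k}\bar{\by}_{k-1}-\beta_{1}\be_{1})=0$; in the regime $k>k_{t}$ the analogous term is absent by \cref{gGKB_matrix1} or because $\alpha_{k_{t}+1}=0$. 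Hence $\bN\bff_{1}=\bV_{k}\bigl[\lambda_{k-1}\bB_{k}^{\top}(\bB_{k}\bar{\by}_{k-1}-\beta_{1}\be_{1})+\bar{\by}_{k-1}\bigr]$, and applying the $\bN^{-1}$-orthonormality $\bV_{k}^{\top}\bN^{-1}\bV_{k}=\bI$ yields
\[
\|\bff_{1}\|_{\bN}^{2}=\bff_{1}^{\top}\bN\bff_{1}=\bigl\|\lambda_{k-1}\bB_{k}^{\top}(\bB_{k}\bar{\by}_{k-1}-\beta_{1}\be_{1})+\bar{\by}_{k-1}\bigr\|_{2}^{2},
\]
which matches the first block of $F^{(k)}(\bar{\by}_{k-1},\lambda_{k-1})$. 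Summing the two matched blocks gives the claimed identity.

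I expect the main (minor) obstacle is bookkeeping the two regimes $k\leq k_{t}$ and $k>k_{t}$ in a single formula and verifying that the extra term $\alpha_{k+1}\bv_{k+1}\be_{k+1}^{\top}(\bB_{k}\bar{\by}_{k-1}-\beta_{1}\be_{1})$ vanishes in every case; once this is handled cleanly using the unified notation of \Cref{rem:not} and the zero-padding convention for $\bar{\by}_{k-1}$, the rest is a direct application of the $\bM^{-1}$- and $\bN^{-1}$-orthonormality of the \textsf{gen-GKB} bases.
\end{proofof}
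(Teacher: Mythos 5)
Your proposal is correct and follows essentially the same route as the paper's proof: both reduce the claim to the two blocks of $F$, use \cref{GKB23}, \cref{GKB33} together with the $\bM^{-1}$- and $\bN^{-1}$-orthonormality of $\bU_{k+1}$ and $\bV_k$, and dispose of the trailing term $\alpha_{k+1}\bv_{k+1}\be_{k+1}^{\top}(\bB_k\bar{\by}_{k-1}-\beta_1\be_1)$ by exactly the same case distinction (zero last entry of $\bar{\by}_{k-1}$ for $k\leq k_t$, and $\alpha_{k_t+1}\beta_{k_t+1}=0$ or \cref{gGKB_matrix1} for $k>k_t$). No gaps to report.
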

\begin{proof}
	First notice that 
	\begin{equation*}
		\|F(\bx_{k-1},\lambda_{k-1})\|_{\widehat{\bN}}^2 = 
		\|\lambda_{k-1}\bA^{\top}\bM^{-1}(\bA\bx_{k-1}-\bb)+\bN^{-1}\bx\|_{\bN}^{2} + (\frac{1}{2}\|\bA\bx_{k-1}-\bb\|_{\bM^{-1}}^{2}-\frac{\tau m}{2})^2.
	\end{equation*}
	For the first term of the above summation, we have
	\begin{align*}
		& \ \ \ \ \|\lambda_{k-1}\bA^{\top}\bM^{-1}(\bA\bx_{k-1}-\bb)+\bN^{-1}\bx_{k-1}\|_{\bN}^{2} \\
		&= (\lambda_{k-1}\bA^{\top}\bM^{-1}(\bA\bx_{k-1}-\bb)+\bN^{-1}\bx_{k-1})^{\top}\bN(\lambda_{k-1}\bA^{\top}\bM^{-1}(\bA\bx_{k-1}-\bb)+\bN^{-1}\bx_{k-1}),
	\end{align*}
	and
	\begin{align*}
		& \ \ \ \ \bN(\lambda_{k-1}\bA^{\top}\bM^{-1}(\bA\bx_{k-1}-\bb)+\bN^{-1}\bx_{k-1}) \\ 
		&= \lambda_{k-1}\bN\bA^{\top}\bM^{-1}\bU_{k+1}(\bB_{k}\bar{\by}_{k-1}-\beta_{1}e_1)+\bV_{k}\bar{\by}_{k-1} \\
		&= \lambda_{k-1}(\bV_{k}\bB_{k}^{\top}+\alpha_{k+1}\bv_{k+1}e_{k+1}^{\top})(\bB_{k}\bar{\by}_{k-1}-\beta_{1}e_1) + \bV_{k}\bar{\by}_{k-1} \\
		&= \bV_{k}(\lambda_{k-1}\bB_{k}^{\top}(\bB_{k}\bar{\by}_{k-1}-\beta_{1}e_1)+\bar{\by}_{k-1}),
	\end{align*}
	where we have used
	\begin{align*}
		 \alpha_{k+1}\bv_{k+1}e_{k+1}^{\top}(\bB_{k}\bar{\by}_{k-1}-\beta_{1}e_1) 
		= \alpha_{k+1}\beta_{k+1}\bv_{k+1}e_{k}^{\top}\bar{\by}_{k-1} 
		= 0,
	\end{align*}
	because $e_{k}^{\top}\bar{\by}_{k-1} = 0$ for $k\leq k_t$ and $\alpha_{k+1}\beta_{k+1} = 0$ for $k>k_t$.
	Similarly, we have 
	\begin{equation*}
		\lambda_{k-1}\bA^{\top}\bM^{-1}(\bA\bx_{k-1}-\bb)+\bN^{-1}\bx_{k-1}
		= \bN^{-1}\bV_{k}(\lambda_{k-1}\bB_{k}^{\top}(\bB_{k}\bar{\by}_{k-1}-\beta_{1}e_1)+\bar{\by}_{k-1}) .
	\end{equation*}
	We also have
	\[
		\|\lambda_{k-1}\bA^{\top}\bM^{-1}(\bA\bx_{k-1}-\bb)+\bN^{-1}\bx_{k-1}\|_{\bN}
		= \|\lambda_{k-1}\bB_{k}^{\top}(\bB_{k}\bar{\by}_{k-1}-\beta_{1}e_1)+\bar{\by}_{k-1}\|_{2}
	\] 
	and 
	\[\frac{1}{2}\|\bA\bx_{k-1}-\bb\|_{\bM^{-1}}^{2}-\frac{\tau m}{2}=\frac{1}{2}\|\bB_{k}\bar{\by}_{k-1}-\beta_{1}e_1\|_{2}^{2}-\frac{\tau m}{2}. \] 
	The desired result immediately follows by using \cref{proj_F_com}.
\end{proof}

\begin{proofof}{\Cref{lem:merit_comp}}
	First notice that 
	$
		h(\bx,\lambda) = \frac{1}{2}\|F(\bx,\lambda)\|_{\widehat{N}}^2 .
	$
	Combining the above relation with \Cref{lem:proj_F} we obtain the first identity of \cref{merit}. Also, for $k<k_t$ we have 
	$
		h(\bx_{k},\lambda_{k}) = \frac{1}{2}\|F^{(k+1)}(\bar{\by}_{k},\lambda_{k})\|_{2}^{2}
	$
	with
	\begin{equation*}
		F^{(k+1)}(\bar{\by}_{k},\lambda_{k}) = 
		\begin{pmatrix}
			\lambda\bB_{k+1}^{\top}(\bB_{k+1}\bar{\by}_k-\beta_{1}e_{1})+\bar{\by}_k \\
			\frac{1}{2}\|\bB_{k+1}\bar{\by}_k-\beta_{1}e_{1}\|_{2}^2-\frac{\tau m}{2}
		\end{pmatrix} .
	\end{equation*}
	Since the last element of $\beta_{1}e_{1}$ and $\bar{\by}_k$ is zero, it is easy to verify that 
	\begin{equation*}
		\bB_{k+1}\bar{\by}_k-\beta_{1}e_{1} = 
		\begin{pmatrix}
			\bar{\bB}_{k}\bar{\by}_k-\beta_{1}e_{1} \\ 0
		\end{pmatrix}, \ \
		\bB_{k+1}^{\top}(\bB_{k+1}\bar{\by}_k-\beta_{1}e_{1}) = 
		\bar{\bB}_{k}^{\top}(\bar{\bB}_{k}\bar{\by}_k-\beta_{1}e_{1}).
	\end{equation*}
	For $k\geq k_t$, we have $F^{(k)}(\by,\lambda)=F^{(k+1)}(\by,\lambda)=\bar{F}^{(k)}(\by,\lambda)$. Therefore, we prove the second identity of \cref{merit}.
\end{proofof}

In order to prove \Cref{thm:descent}, we need \Cref{lem:proj_F} and the following result.
\begin{lemma}\label{lem:proj_J}
	For any $k\geq 1$, we have the following identity:
	\begin{equation*}
		\begin{pmatrix}
			\bV_{k}^{\top} &  \\
			 & 1
		\end{pmatrix}J(\bx_{k-1},\lambda_{k-1})\widehat{\bN}
		F(\bx_{k-1},\lambda_{k-1})
		= J^{(k)}(\bar{\by}_{k-1},\lambda_{k-1})F^{(k)}(\bar{\by}_{k-1},\lambda_{k-1}).
	\end{equation*}
\end{lemma}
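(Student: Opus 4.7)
\begin{proofof}{\Cref{lem:proj_J}}
The plan is a direct block-matrix computation that relies on (i) the \textsf{gen-GKB} identities from \cref{gGKB_matrix}, namely $\bA\bV_k=\bU_{k+1}\bB_k$, $\bU_{k+1}^{\top}\bM^{-1}\bU_{k+1}=\bI$, $\bV_k^{\top}\bN^{-1}\bV_k=\bI$, and $\bb=\beta_1\bU_{k+1}e_1$; and (ii) the intermediate identity established inside the proof of \Cref{lem:proj_F}, which states that
\[
\bN\bigl(\lambda_{k-1}\bA^{\top}\bM^{-1}(\bA\bx_{k-1}-\bb)+\bN^{-1}\bx_{k-1}\bigr)=\bV_{k}\bp_k,
\]
where, writing $q_k:=\tfrac12\|\bB_k\bar{\by}_{k-1}-\beta_1 e_1\|_2^2-\tfrac{\tau m}{2}$ and $\bp_k:=\lambda_{k-1}\bB_k^{\top}(\bB_k\bar{\by}_{k-1}-\beta_1 e_1)+\bar{\by}_{k-1}$, we have by \Cref{lem:proj} that $F^{(k)}(\bar{\by}_{k-1},\lambda_{k-1})=(\bp_k^{\top},q_k)^{\top}$.

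First I would compute $\widehat{\bN}F(\bx_{k-1},\lambda_{k-1})$ and apply the preceding identity to rewrite its top block as $\bV_k\bp_k$, leaving the scalar bottom block equal to $q_k$. Next I would apply $J(\bx_{k-1},\lambda_{k-1})$ from \cref{Jacob} and then premultiply by $\mathrm{diag}(\bV_k^{\top},1)$. The four relevant block reductions are
\[
\bV_k^{\top}\bA^{\top}\bM^{-1}\bA\bV_k=\bB_k^{\top}\bB_k,\qquad \bV_k^{\top}\bN^{-1}\bV_k=\bI,
\]
\[
\bV_k^{\top}\bA^{\top}\bM^{-1}(\bA\bx_{k-1}-\bb)=\bB_k^{\top}(\bB_k\bar{\by}_{k-1}-\beta_1 e_1),\qquad (\bA\bx_{k-1}-\bb)^{\top}\bM^{-1}\bA\bV_k=(\bB_k\bar{\by}_{k-1}-\beta_1 e_1)^{\top}\bB_k,
\]
all of which follow immediately by inserting $\bA\bV_k=\bU_{k+1}\bB_k$ and $\bA\bx_{k-1}-\bb=\bU_{k+1}(\bB_k\bar{\by}_{k-1}-\beta_1 e_1)$ and using the $\bM^{-1}$-orthonormality of $\bU_{k+1}$. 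Assembling the blocks yields
\[
\begin{pmatrix} (\lambda_{k-1}\bB_k^{\top}\bB_k+\bI)\bp_k+\bB_k^{\top}(\bB_k\bar{\by}_{k-1}-\beta_1 e_1)q_k \\ (\bB_k\bar{\by}_{k-1}-\beta_1 e_1)^{\top}\bB_k\,\bp_k \end{pmatrix},
\]
which, compared with the explicit expression for $J^{(k)}(\bar{\by}_{k-1},\lambda_{k-1})$ in \cref{proj_J_com}, is exactly $J^{(k)}(\bar{\by}_{k-1},\lambda_{k-1})F^{(k)}(\bar{\by}_{k-1},\lambda_{k-1})$.

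There is no real technical obstacle, only careful bookkeeping; the only subtlety is the case $k>k_t$ (and the sub-case $\beta_{k_t+1}=0$), where one must replace $\bV_k,\bB_k,\bU_{k+1}$ by $\bV_{k_t},\bB_{k_t},\bU_{k_t+1}$ (or $\underline{\bB}_{k_t},\bU_{k_t}$) as stipulated in \Cref{rem:not}. In each of these substitutions the identities $\bA\bV=\bU\bB$, $\bU^{\top}\bM^{-1}\bU=\bI$, $\bV^{\top}\bN^{-1}\bV=\bI$ remain valid by \cref{gGKB_matrix1} and \Cref{prop:Krylov}, so the four block reductions above go through unchanged; the extra term $\alpha_{k+1}\bv_{k+1}e_{k+1}^{\top}(\bB_k\bar{\by}_{k-1}-\beta_1 e_1)$ that appeared in the proof of \Cref{lem:proj_F} vanishes for exactly the same reason given there, namely $e_k^{\top}\bar{\by}_{k-1}=0$ for $k\le k_t$ and $\alpha_{k+1}\beta_{k+1}=0$ for $k>k_t$. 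The identity therefore holds in all cases.
\end{proofof}
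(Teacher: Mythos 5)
Your proposal is correct and uses essentially the same ingredients as the paper's proof: the gen-GKB relations $\bA\bV_k=\bU_{k+1}\bB_k$, the $\bM^{-1}$- and $\bN^{-1}$-orthonormality of $\bU_{k+1}$ and $\bV_k$, the factorization of $\widehat{\bN}F(\bx_{k-1},\lambda_{k-1})$ through $\bV_k$ from the proof of \Cref{lem:proj_F}, and the vanishing of the $\alpha_{k+1}\beta_{k+1}$ cross term. The only difference is organizational: you evaluate the product right-to-left (collapsing $\widehat{\bN}F$ onto $\mathrm{span}\{\bV_k\}\times\mathbb{R}$ first), which lets the orthogonality kill the residual immediately, whereas the paper proves the matrix-level identity $\mathrm{diag}(\bV_k^{\top},1)\,J\,\widehat{\bN}=J^{(k)}\,\mathrm{diag}(\bV_k^{\top},1)+\bigl(\begin{smallmatrix}\alpha_{k+1}\beta_{k+1}e_{k}\bv_{k+1}^{\top}&\\&0\end{smallmatrix}\bigr)$ and then shows the residual annihilates the factored form of $F$ at the end.
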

\begin{proof}
	First notice from \cref{Jacob} that
	\begin{align*}
		\begin{pmatrix}
			\bV_{k}^{\top} &  \\
			 & 1
		\end{pmatrix}J(\bx_{k-1},\lambda_{k-1})\widehat{\bN}
		&= \begin{pmatrix}
			\lambda_{k-1}(\bA\bV_{k})^{\top}\bM^{-1}\bA\bN+\bV_{k}^{\top} & (\bA\bV_{k})^{\top}\bM^{-1}(\bA\bx_{k-1}-\bb) \\
			(\bA\bx_{k-1}-\bb)^{\top}\bM^{-1}\bA\bN & 0
		\end{pmatrix} .
	\end{align*}
	Using \cref{GKB33} and similar derivations to the proof of Lemma \ref{lem:proj_F}, we get
	\begin{align}
		(\bA\bx_{k-1}-\bb)^{\top}\bM^{-1}\bA\bN
		&= (\bB_{k}\bar{\by}_{k-1}-\beta_{1}e_{1})^{\top}\bU_{k+1}^{\top}\bM^{-1}\bA\bN  \nonumber  \\
		&= (\bB_{k}\bar{\by}_{k-1}-\beta_{1}e_{1})^{\top}(\bB_{k}\bV_{k}^{\top}+\alpha_{k+1}e_{k+1}\bv_{k+1}^{\top})  \nonumber \\
		&= (\bB_{k}\bar{\by}_{k-1}-\beta_{1}e_{1})^{\top}\bB_{k}\bV_{k}^{\top}.  \label{id1}
	\end{align}
	Also, we can get
	\begin{align*}
		(\bA\bV_{k})^{\top}\bM^{-1}\bA\bN
		& = (\bU_{k+1}\bB_{k})^{\top}\bM^{-1}\bA\bN  
		= \bB_{k}^{\top}(\bB_{k}\bV_{k}^{\top}+\alpha_{k+1}e_{k+1}\bv_{k+1}^{\top})  \\
		&= \bB_{k}^{\top}\bB_{k}\bV_{k}^{\top}+\alpha_{k+1}\beta_{k+1}e_{k}\bv_{k+1}^{\top} , 
	\end{align*}
	and 
	\begin{align*}
		(\bA\bV_{k})^{\top}\bM^{-1}(\bA\bx_{k-1}-\bb)
		&= (\bB_{k}\bU_{k+1})^{\top}\bM^{-1}\bU_{k+1}(\bB_{k}\bar{\by}_{k-1}-\beta_1e_1) \\
		&= \bB_{k}^{\top}(\bB_{k}\bar{\by}_{k-1}-\beta_1e_1) .
	\end{align*}
	Using \cref{proj_J_com}, we get
	\begin{align*}
		& \ \ \ \ 
		\begin{pmatrix}
			\bV_{k}^{\top} &  \\
			 & 1
		\end{pmatrix}J(\bx_{k-1},\lambda_{k-1})\widehat{\bN} \\
		&= \begin{pmatrix}
			(\lambda_{k-1}\bB_{k}^{\top}\bB_{k}+\bI)\bV_{k}^{\top} & \bB_{k}^{\top}(\bB_{k}\bar{\by}_{k-1}-\beta_1e_1) \\
			\left(\bB_{k}\bar{\by}_{k-1}-\beta_{1}e_{1}\right)^{\top}\bB_{k}\bV_{k}^{\top} & 0
		\end{pmatrix}+
		\begin{pmatrix}
			\alpha_{k+1}\beta_{k+1}e_{k+1}\bv_{k+1}^{\top} &  \\
			 & 0
		\end{pmatrix} \\
		&= J^{(k)}(\bar{\by}_{k-1},\lambda_{k-1}) 
		\begin{pmatrix}
			\bV_{k}^{\top} &  \\
			 & 1
		\end{pmatrix} +
		\begin{pmatrix}
			\alpha_{k+1}\beta_{k+1}e_{k}\bv_{k+1}^{\top} &  \\
			 & 0
		\end{pmatrix}.
	\end{align*}
	Using similar derivations to the proof of Lemma \ref{lem:proj_F}, we get
	\begin{align*}
		F(\bx_{k-1},\lambda_{k-1})
		&= \begin{pmatrix}
			\bN^{-1} &  \\
			 & 1
		\end{pmatrix}
		\begin{pmatrix}
			\lambda_{k-1}\bN\bA^{\top}\bM^{-1}(\bA\bx_{k-1}-\bb)+\bx_{k-1} \\
			\frac{1}{2}\|\bA\bx_{k-1}-\bb\|_{\bM^{-1}}^{2}-\frac{\tau m}{2}
		\end{pmatrix} \\
		&=\begin{pmatrix}
			\bN^{-1} &  \\
			 & 1
		\end{pmatrix}
		\begin{pmatrix}
			\bV_{k} &  \\
			 & 1
		\end{pmatrix}
		\begin{pmatrix}
			\lambda_{k-1}\bB_{k}^{\top}(\bB_{k}\bar{\by}_{k-1}-\beta_{1}e_1)+\bar{\by}_{k-1} \\
			\frac{1}{2}\|\bB_{k}\bar{\by}_{k-1}-\beta_{1}e_1\|_{2}^{2}-\frac{\tau m}{2}
		\end{pmatrix} \\
		&= \begin{pmatrix}
			\bN^{-1} &  \\
			 & 1
		\end{pmatrix}
		\begin{pmatrix}
			\bV_{k} &  \\
			 & 1
		\end{pmatrix}
		F^{(k)}(\bar{\by}_{k-1},\lambda_{k-1}) .
	\end{align*}
	Using the relations 
	\begin{equation*}
		\begin{pmatrix}
			\bV_{k}^{\top} &  \\
			 & 1
		\end{pmatrix}
		\begin{pmatrix}
			\bN^{-1} &  \\
			 & 1
		\end{pmatrix}
		\begin{pmatrix}
			\bV_{k} &  \\
			 & 1
		\end{pmatrix} = \bI , \ \
		\begin{pmatrix}
			\alpha_{k+1}\beta_{k+1}e_{k}\bv_{k+1}^{\top} &  \\
			 & 0
		\end{pmatrix}
		\begin{pmatrix}
			\bN^{-1} &  \\
			 & 1
		\end{pmatrix}
		\begin{pmatrix}
			\bV_{k} &  \\
			 & 1
		\end{pmatrix} = \mathbf{0},
	\end{equation*}
	we finally obtain the desired result.
\end{proof}

\begin{proofof}{\Cref{thm:descent}}
	Notice $J(\bx,\lambda)$ is the Jacobian of $F(\bx,\lambda)$. Using $h(\bx,\lambda) = \frac{1}{2}\|F(\bx,\lambda)\|_{\widehat{N}}^2$ we get
	$
		\nabla h(\bx,\lambda) 
		= J(\bx,\lambda)\widehat{\bN}F(\bx,\lambda) ,
	$ leading to
	\begin{align*}
		\nabla h(\bx_{k-1},\lambda_{k-1})^{\top} \begin{pmatrix}
			\Delta\bx_k \\ \Delta\lambda_k
		\end{pmatrix} 
		& = \begin{pmatrix}
			\Delta\by_{k} \\ \Delta\lambda_{k}
		\end{pmatrix}^{\top}
		\begin{pmatrix}
			\bV_{k}^{\top} &  \\
			 & 1
		\end{pmatrix} J(\bx_{k-1},\lambda_{k-1})\widehat{\bN}F(\bx_{k-1},\lambda_{k-1}) \\
		&= \begin{pmatrix}
			\Delta\by_{k} \\ \Delta\lambda_{k}
		\end{pmatrix}^{\top}
		J^{(k)}(\bar{\by}_{k-1},\lambda_{k-1})F^{(k)}(\bar{\by}_{k-1},\lambda_{k-1}) \\
		&= -\|F^{(k)}(\bar{\by}_{k-1},\lambda_{k-1})\|_{2}^{2} = -\|F(\bx_{k-1},\lambda_{k-1})\|_{\widehat{\bN}}^2 \\
		&= -2h(\bx_{k-1},\lambda_{k-1}) \leq 0 ,
	\end{align*}
	where we have used \Cref{lem:proj_F} and \Cref{lem:proj_J}.
\end{proofof}

\section{Convergence analysis}\label{sec4}
The objective of this section is to prove the convergence of \textsf{PNT}, which is stated in the following result and \Cref{coro:converge}.

\begin{theorem}\label{thm:conv}
Suppose the \textsf{PNT} algorithm is initialized with $\bar{\by}_{0}=\mathbf{0}$, $\bx_{0}=\mathbf{0}$ and $\lambda_0>0$. Then we either have
\begin{equation}\label{conv1}
	h(\bx_k,\lambda_k) = 0
\end{equation}
for some $k<\infty$, or have
\begin{equation}\label{conv2}
	\lim_{k\rightarrow \infty} h(\bx_k,\lambda_k) = 0 .
\end{equation}
\end{theorem}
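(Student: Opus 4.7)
The plan is to convert the descent property of \Cref{thm:descent} and the Armijo backtracking into a geometric contraction of the merit function, and then to prove that the chosen step-length cannot collapse to zero. Using $\nabla h(\bx_{k-1},\lambda_{k-1})^{\top}(\Delta\bx_k^{\top},\Delta\lambda_k)^{\top} = -2h(\bx_{k-1},\lambda_{k-1})$, the sufficient-decrease condition \cref{eq:sufficient_decrease} simplifies to
\begin{equation*}
h(\bx_k,\lambda_k) \leq (1-2c\gamma_k)\,h(\bx_{k-1},\lambda_{k-1}).
\end{equation*}
Hence either $h(\bx_K,\lambda_K)=0$ at some finite $K$ and \cref{conv1} holds, or the sequence is strictly positive and monotonically decreasing; in the latter case a uniform lower bound $\gamma_k\geq\gamma_{\min}>0$ suffices for \cref{conv2}, since it yields the geometric rate $h(\bx_k,\lambda_k)\leq (1-2c\gamma_{\min})^{k}h(\bx_0,\lambda_0)$.

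To bound $\gamma_k$ from below, I invoke \Cref{thm:Armijo}: backtracking accepts $\gamma \leq \gamma_{\max}$ with
\begin{equation*}
\gamma_{\max} \;\geq\; \frac{2(1-c)\,|\nabla h(\bx_{k-1},\lambda_{k-1})^{\top}\bp_k|}{\zeta_{k}\,\|\bp_k\|_2^{2}},\qquad \bp_k = \begin{pmatrix}\Delta\bx_k \\ \Delta\lambda_k\end{pmatrix},
\end{equation*}
where $\zeta_k$ is a Lipschitz constant of $\nabla h$ near the current iterate. The numerator equals $4(1-c)h(\bx_{k-1},\lambda_{k-1})$ by \Cref{thm:descent}. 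For the denominator, $(\Delta\by_k^{\top},\Delta\lambda_k)^{\top} = -J^{(k)}(\bar{\by}_{k-1},\lambda_{k-1})^{-1}F^{(k)}(\bar{\by}_{k-1},\lambda_{k-1})$, and \Cref{lem:proj_F} yields $\|F^{(k)}\|_2^2 = 2h(\bx_{k-1},\lambda_{k-1})$, while $\|\Delta\bx_k\|_2\leq\|\bV_k\|_2\|\Delta\by_k\|_2$ with $\|\bV_k\|_2$ uniformly bounded (it stabilizes at $\bV_{k_t}$ once $k>k_t$). Combining these gives
\begin{equation*}
\gamma_k \;\geq\; \eta\,\gamma_{\max} \;\geq\; \frac{C}{\zeta_k\,\|J^{(k)}(\bar{\by}_{k-1},\lambda_{k-1})^{-1}\|^{2}}
\end{equation*}
for a universal constant $C>0$, reducing everything to uniform upper bounds on $\zeta_k$ and on $\|J^{(k)-1}\|$.

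The bound on $\zeta_k$ follows by confining the iterates to a bounded set. Monotone decrease places $(\bx_k,\lambda_k)$ in the sub-level set $\{h\leq h(\mathbf{0},\lambda_0)\}$. Coercivity in $\bx$ comes from the $\tfrac{1}{2}\|\bN^{-1}\bx\|_{\bN}^{2}$ component of $h$, boundedness of $\|\bA\bx-\bb\|_{\bM^{-1}}$ from the second component of $h$; then $\lambda_k$ is bounded above because the $\lambda\bA^{\top}\bM^{-1}(\bA\bx-\bb)$ block in $\nabla_{\bx}\calL$ is otherwise driven to infinity linearly in $\lambda$, and it is bounded away from $0$ by the step-length rule in \Cref{alg:PNTM} combined with \Cref{assump1}, which prevents $\|\bA\bx_k-\bb\|_{\bM^{-1}}^2$ from reaching a minimizing value along the trajectory. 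On this compact set $\nabla h$ is $C^{1}$, hence Lipschitz with constant bounded uniformly by some $\zeta_{\max}<\infty$.

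The main obstacle, which simultaneously proves \Cref{prop:nonsingular}, is the uniform bound on $\|J^{(k)-1}\|$. A Schur-complement expansion of \cref{proj_J_com} with respect to the SPD top-left block $\lambda_{k-1}\bB_k^{\top}\bB_k+\bI$ (whose inverse has norm at most $1$ since $\lambda_{k-1}\geq 0$) reduces invertibility and norm control to the scalar $\bw_k^{\top}(\lambda_{k-1}\bB_k^{\top}\bB_k+\bI)^{-1}\bw_k$, where $\bw_k = \bB_k^{\top}(\bB_k\bar{\by}_{k-1}-\beta_1 e_1)$. If this scalar vanished, then $\bw_k=\mathbf{0}$; since $\bB_k$ has full column rank (by \cref{break_down}) this would force $\bB_k\bar{\by}_{k-1}=\beta_1 e_1$, i.e.\ $\|\bA\bx_{k-1}-\bb\|_{\bM^{-1}}=0$, contradicting \Cref{assump1}. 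A compactness argument on the bounded iterate set, together with the fact that only finitely many distinct dimensions occur (the dimension stabilizes at $k_t$ once $k>k_t$), promotes this positivity to a uniform lower bound and hence a uniform upper bound on $\|J^{(k)-1}\|$. Substituting back yields $\gamma_k\geq\gamma_{\min}$ and closes the argument through the contractive recurrence; I anticipate the subtlest point to be the quantitative lower bound on $\lambda_k$, since that is where \Cref{assump1} must be used in a nontrivial way.
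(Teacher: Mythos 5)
Your overall skeleton does match the paper's: descent identity from \Cref{thm:descent}, Armijo backtracking, and a uniform lower bound on $\gamma_k$ obtained from bounded iterates, bounded Lipschitz constants of $\nabla h$, and a uniform bound on $\|J^{(k)}(\bar{\by}_{k-1},\lambda_{k-1})^{-1}\|_2$. Your closing step is even a small improvement: since $c=10^{-4}$ and $\gamma_k\leq 1$, the recursion $h(\bx_k,\lambda_k)\leq(1-2c\gamma_k)h(\bx_{k-1},\lambda_{k-1})$ with $\gamma_k\geq\gamma_{\min}$ gives a geometric rate directly, whereas the paper closes with a subsequence argument. The problem is that the two load-bearing ingredients are not actually established in your proposal, and the arguments you sketch for them are incorrect as stated.

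First, the Schur-complement nonvanishing claim is wrong: $\bw_k=\bB_k^{\top}(\bB_k\bar{\by}_{k-1}-\beta_1 e_1)=\mathbf{0}$ only says that $\bar{\by}_{k-1}$ is the least-squares minimizer of $\|\bB_k\by-\beta_1 e_1\|_2$; since $\bB_k\in\mathbb{R}^{(k+1)\times k}$, the vector $\beta_1 e_1$ is generally not in $\mathcal{R}(\bB_k)$, so you cannot conclude $\bB_k\bar{\by}_{k-1}=\beta_1 e_1$, let alone $\|\bA\bx_{k-1}-\bb\|_{\bM^{-1}}=0$. What is really needed (and what the paper proves in \Cref{lem:lower_bnd}) is a \emph{uniform} lower bound $\|\bB_k^{\top}(\bB_k\bar{\by}_{k-1}-\beta_1 e_1)\|_2\geq C_1>0$ along the iterates. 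Its proof is the hard part: an induction using the second row of the projected Newton system and $\gamma_k\leq 1$ shows the iterates never cross the discrepancy level, $\|\bB_k\bar{\by}_{k-1}-\beta_1 e_1\|_2\geq\sqrt{\tau m}$ (\cref{res_bound}), and then \Cref{assump1} together with the identification of the Krylov-subspace least-squares solution with the minimum-$\bN^{-1}$-norm least-squares solution of the full problem (\Cref{lem:solv_ls}, \Cref{lem:ls_lim}) rules out the iterates approaching the projected least-squares solution, where $\bw_k$ would vanish. Your "compactness promotes positivity to a uniform bound" step silently presupposes both this nonvanishing at limit points and the boundedness of the iterates, so it is circular as written. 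Second, the boundedness argument itself is unsound: $h$ has no separate $\tfrac12\|\bx\|_{\bN^{-1}}^2$ term — its first block is $\|\lambda\bA^{\top}\bM^{-1}(\bA\bx-\bb)+\bN^{-1}\bx\|_{\bN}$, where cancellation can occur, and sub-level sets of $h$ are genuinely unbounded in $\lambda$ (follow $(\bx_{\lambda},\lambda)$ with $\bx_{\lambda}$ the Tikhonov minimizer as $\lambda\to\infty$; by \Cref{assump1} the residual stays below $\sqrt{\tau m}$ and $h$ stays bounded). Excluding $\lambda_k\to\infty$ along the iterates requires exactly the lower bound $C_1$ above, and excluding $\|\bx_k\|\to\infty$ uses the Krylov structure $\bx_k=\bV_{k_t}\by_k$ and the full column rank of $\bB_{k_t}$ (bounded residual alone does not bound $\bx$ when $\bA$ is rank deficient); this is how \Cref{lem:point_bnd} proceeds. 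Finally, the "quantitative lower bound on $\lambda_k$" you flag as the subtlest point is not needed anywhere: $\lambda_{k-1}\geq 0$ already gives $\sigma_{\min}(\lambda_{k-1}\bB_k^{\top}\bB_k+\bI)\geq 1$, which is all the Schur-complement bound of \Cref{lem:inverse_bnd} uses; the genuinely subtle point is the residual lower bound you assumed.
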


Notice that $(\bx^{*},\lambda^{*})$ is the unique minimizer of $h(\bx,\lambda)$ and $h(\bx^{*},\lambda^{*})=0$. Therefore, \cref{conv1} implies that the algorithm finds the exact solution to \cref{discrepancy,lagr} at the $k$-th iteration. In the following part, we prove \cref{conv2} under the assumption that $h(\bx_k,\lambda_k)>0$ for any $k\geq 1$. We need a series of lemmas, which are \Cref{lem:ls_lim}--\Cref{lem:gamma}. All these lemmas follow the same assumption of \Cref{thm:conv}.

\begin{lemma}\label{lem:ls_lim}
	For any matrix $\bC\in\mathbb{R}^{m\times n}$ with full column rank and $\bd\in\mathbb{R}^{m}$, if the vector sequence $\{\bw_k\}\in\mathbb{R}^{n}$ satisfies 
	$
		\lim_{k\rightarrow \infty}\|\bC^{\top}(\bC\bw_{k}-\bd)\|_{2} = 0,
	$
	then
	$
		\lim_{k\rightarrow \infty}\bw_k = \bw^{*}:= \argmin _{\bw\in\mathbb{R}^{n}}\|\bC\bw-\bd\|_{2} .
	$
\end{lemma}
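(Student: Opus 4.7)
The plan is to exploit the invertibility of $\bC^\top\bC$ that follows from the full-column-rank hypothesis, and to reduce the limit condition to a linear-algebraic estimate on $\bw_k - \bw^*$.

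First, I would recall that when $\bC$ has full column rank, the Gram matrix $\bC^\top\bC \in \mathbb{R}^{n \times n}$ is symmetric positive definite, and in particular invertible, with $\|(\bC^\top\bC)^{-1}\|_2 = 1/\sigma_{\min}(\bC)^2 < \infty$. Next, I would characterize $\bw^*$ via the normal equation: since $\bw^*$ minimizes the strictly convex (under the full column rank assumption) quadratic $\|\bC\bw-\bd\|_2^2$, the first order optimality gives $\bC^\top(\bC\bw^* - \bd) = \mathbf{0}$, i.e.\ $\bC^\top\bC\,\bw^* = \bC^\top\bd$, and $\bw^*$ is the unique such vector.

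The key identity is then
\begin{equation*}
    \bC^\top(\bC\bw_k - \bd) = \bC^\top\bC\,\bw_k - \bC^\top\bd = \bC^\top\bC(\bw_k - \bw^*),
\end{equation*}
obtained by subtracting the normal equation from the residual at $\bw_k$. Multiplying both sides on the left by $(\bC^\top\bC)^{-1}$ and taking the Euclidean norm yields
\begin{equation*}
    \|\bw_k - \bw^*\|_2 \leq \|(\bC^\top\bC)^{-1}\|_2 \cdot \|\bC^\top(\bC\bw_k - \bd)\|_2 = \frac{1}{\sigma_{\min}(\bC)^2}\,\|\bC^\top(\bC\bw_k - \bd)\|_2.
\end{equation*}
Since the right-hand side tends to $0$ by hypothesis and the multiplicative constant is finite and fixed, we conclude $\bw_k \to \bw^*$ as $k \to \infty$.

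There is no real obstacle here; the argument is an elementary stability estimate for the linear map $\bw \mapsto \bC^\top\bC\bw$, which is a bijection of $\mathbb{R}^n$ precisely because $\bC$ has full column rank. The only point to be slightly careful about is to explicitly invoke full column rank to ensure both the uniqueness of $\bw^*$ (and thus the well-posedness of the $\argmin$ as a single vector) and the invertibility of $\bC^\top\bC$ used in the final bound.
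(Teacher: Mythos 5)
Your proposal is correct and follows essentially the same route as the paper: both subtract the normal equation $\bC^{\top}(\bC\bw^{*}-\bd)=\mathbf{0}$ to reduce the hypothesis to $\bC^{\top}\bC(\bw_k-\bw^{*})\rightarrow\mathbf{0}$ and then use the invertibility (positive definiteness) of $\bC^{\top}\bC$ to conclude $\bw_k\rightarrow\bw^{*}$. Your explicit bound via $\|(\bC^{\top}\bC)^{-1}\|_2=1/\sigma_{\min}(\bC)^2$ is just a quantitative version of the paper's appeal to positive definiteness and norm equivalence.
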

\begin{proof}
	First notice that $\bw^{*}$ is well-defined, since $\argmin _{\bw\in\mathbb{R}^{n}}\|\bC\bw-\bd\|_{2}$ has a unique solution for the full column rank matrix $\bC$. For any $\bw_{k}$, let $\bw_k=\bw^{*}+\bar{\bw}_{k}$. Then we have
	\[
		\lim_{k\rightarrow \infty}\|\bC^{\top}(\bC\bw_{k}-\bd)\|_{2} = 
		\lim_{k\rightarrow \infty}\|\bC^{\top}(\bC\bw^{*}-\bd)+\bC^{\top}\bC\bar{\bw}_{k}\|_{2} =
		\lim_{k\rightarrow \infty}\|\bC^{\top}\bC\bar{\bw}_{k}\|_{2}=0,
	\]
	since $\bC^{\top}(\bC\bw^{*}-\bd)=\mathbf{0}$. Now we have $\|\bar{\bw}_k\|_{2}\rightarrow 0$ since $\bC^{\top}\bC$ is positive definite and all norms of $\mathbb{R}^{n}$ are equivalent. Therefore, we have $\|\bw_k-\bw^{*}\|_{2}\rightarrow 0$ or the equivalent form $\lim_{k\rightarrow \infty}\bw_k = \bw^{*}$.
\end{proof}

\begin{lemma}\label{lem:solv_ls}
	If the unique solution to $\min_{\by\in\mathbb{R}^{k_t}}\|\bB_{k_t}y-\beta_{1}e_1\|_{2}$ is $\by_{\mathrm{min}}$, then $\bx_{\mathrm{min}}:=\bV_{k_t}\by_{\mathrm{min}}$ is the unique solution to 
	\begin{equation}\label{ls_MN}
		\min_{\bx\in\mathbb{R}^{n}}\|\bx\|_{\bN^{-1}} \ \ \ \mathrm{s.t.} \ \ \
		\|\bA\bx-\bb\|_{\bM^{-1}}=\mathrm{min} .
	\end{equation}
\end{lemma}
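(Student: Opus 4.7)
The plan is to show two things: (i) $\bx_{\mathrm{min}} = \bV_{k_t}\by_{\mathrm{min}}$ is a least-squares solution of the weighted problem, i.e.\ $\bA^{\top}\bM^{-1}(\bA\bx_{\mathrm{min}}-\bb) = \mathbf{0}$; and (ii) among all such least-squares solutions, $\bx_{\mathrm{min}}$ is the unique one of minimum $\bN^{-1}$-norm. The natural framework is to view $\bA$ as the operator from $(\mathbb{R}^n,\langle\cdot,\cdot\rangle_{\bN^{-1}})$ to $(\mathbb{R}^m,\langle\cdot,\cdot\rangle_{\bM^{-1}})$ with adjoint $\bA^{*} = \bN\bA^{\top}\bM^{-1}$ as introduced in Step~1; uniqueness will come from the fact that, with respect to these inner products, $\ker(\bA)$ and $\mathrm{Range}(\bA^{*})$ are orthogonal complements.

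For (i), I would split according to which of $\alpha_{k_t+1},\beta_{k_t+1}$ vanishes first, but both cases reduce to the same computation. Using \cref{GKB13}--\cref{GKB33} (respectively \cref{GKB41}--\cref{GKB43}) together with $\bb = \beta_1\bU_{k_t+1}\be_1$, I obtain
\[
 \bA\bx_{\mathrm{min}}-\bb = \bU_{k_t+1}(\bB_{k_t}\by_{\mathrm{min}} - \beta_1\be_1),
\]
and hence $\bN\bA^{\top}\bM^{-1}(\bA\bx_{\mathrm{min}}-\bb) = \bV_{k_t}\bB_{k_t}^{\top}(\bB_{k_t}\by_{\mathrm{min}}-\beta_1\be_1) + \alpha_{k_t+1}\bv_{k_t+1}\be_{k_t+1}^{\top}(\bB_{k_t}\by_{\mathrm{min}}-\beta_1\be_1)$. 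The trailing term vanishes by the termination condition ($\alpha_{k_t+1}=0$ in the first case, and $\be_{k_t+1}$ is absent in the second case where $\underline{\bB}_{k_t}$ replaces $\bB_{k_t}$), while $\bB_{k_t}^{\top}(\bB_{k_t}\by_{\mathrm{min}}-\beta_1\be_1)=\mathbf{0}$ is precisely the normal equation satisfied by $\by_{\mathrm{min}}$. Multiplying by $\bN^{-1}$ (invertible) then yields $\bA^{\top}\bM^{-1}(\bA\bx_{\mathrm{min}}-\bb) = \mathbf{0}$, so $\bx_{\mathrm{min}}$ minimizes $\|\bA\bx-\bb\|_{\bM^{-1}}$ over all of $\mathbb{R}^n$.

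For (ii), observe first that by \Cref{prop:Krylov} the column space of $\bV_{k_t}$ lies in $\mathcal{K}_{k_t}(\bN\bA^{\top}\bM^{-1}\bA,\bN\bA^{\top}\bM^{-1}\bb)\subseteq \mathrm{Range}(\bN\bA^{\top}) = \mathrm{Range}(\bA^{*})$, hence $\bx_{\mathrm{min}}\in\mathrm{Range}(\bA^{*})$. Any other minimizer of $\|\bA\bx-\bb\|_{\bM^{-1}}$ can be written as $\bx_{\mathrm{min}} + \bz$ with $\bz\in\ker(\bA)$. Using the adjoint identity $\langle \bA\bx,\by\rangle_{\bM^{-1}} = \langle \bx,\bA^{*}\by\rangle_{\bN^{-1}}$, one checks that $\ker(\bA)$ is the $\bN^{-1}$-orthogonal complement of $\mathrm{Range}(\bA^{*})$, so $\langle \bx_{\mathrm{min}},\bz\rangle_{\bN^{-1}} = 0$ and thus
\[
\|\bx_{\mathrm{min}}+\bz\|_{\bN^{-1}}^{2} = \|\bx_{\mathrm{min}}\|_{\bN^{-1}}^{2} + \|\bz\|_{\bN^{-1}}^{2} \geq \|\bx_{\mathrm{min}}\|_{\bN^{-1}}^{2},
\]
with equality iff $\bz=\mathbf{0}$. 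This proves that $\bx_{\mathrm{min}}$ is the unique solution of \cref{ls_MN}.

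The main obstacle I anticipate is just bookkeeping: I must carry through the two termination scenarios ($\alpha_{k_t+1}=0$ vs.\ $\beta_{k_t+1}=0$) consistently, since in the latter case $\bU_{k_t+1}$ collapses to $\bU_{k_t}$ and $\bB_{k_t}$ is replaced by $\underline{\bB}_{k_t}$, as noted in \Cref{rem:not}. Apart from this notational care, both the residual identity and the orthogonal decomposition $\mathbb{R}^{n} = \mathrm{Range}(\bA^{*}) \oplus_{\bN^{-1}} \ker(\bA)$ are routine consequences of the gen-GKB relations already established.
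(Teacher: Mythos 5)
Your proposal is correct and follows essentially the same route as the paper's proof: verify the normal-equation condition $\bA^{\top}\bM^{-1}(\bA\bx_{\mathrm{min}}-\bb)=\mathbf{0}$ from the gen-GKB relations together with $\bB_{k_t}^{\top}(\bB_{k_t}\by_{\mathrm{min}}-\beta_1\be_1)=\mathbf{0}$ and the termination condition, and then use $\mathrm{span}\{\bV_{k_t}\}\subseteq\mathcal{R}(\bN\bA^{\top})$ from \Cref{prop:Krylov} to get $\bN^{-1}$-orthogonality to $\mathcal{N}(\bA)$, hence minimality and uniqueness of the norm. The only cosmetic differences are that you spell out the minimum-norm characterization via the Pythagorean identity (the paper simply invokes it) and split the two termination cases where the paper unifies them through the identity $\alpha_{k_t+1}e_{k_t+1}^{\top}(\bB_{k_t}\by_{\mathrm{min}}-\beta_1\be_1)=\alpha_{k_t+1}\beta_{k_t+1}e_{k_t}^{\top}\by_{\mathrm{min}}=0$.
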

\begin{proof}
	It is easy to verify that both $\min_{\by\in\mathbb{R}^{k_t}}\|\bB_{k_t}y-\beta_{1}e_1\|_{2}$ and $\cref{ls_MN}$ have a unique solution. A vector $\bx$ is the unique solution to \cref{ls_MN} if and only if
	\begin{equation*}
		\bA^{\top}\bM^{-1}(\bA\bx-\bb)=\mathbf{0} , \ \ \ 
		\bx\perp_{\bN^{-1}}\mathcal{N}(\bA) ,
	\end{equation*}
	where $\perp_{\bN^{-1}}$ means the orthogonality relation under the $\bN^{-1}$-inner product.
	Now we verify the above two conditions for $\bx_{\mathrm{min}}$. For the first condition, using the relations $\bA\bx_{\mathrm{min}}=\bA\bV_{k_t}\by_{\mathrm{min}}=\bU_{k_t+1}\bB_{k_t}\by_{\mathrm{min}}$ and \cref{GKB33}, we get
	\begin{align*}
		\bA^{\top}\bM^{-1}(\bA\bx_{\mathrm{min}}-\bb)
		&= \bA^{\top}\bM^{-1}\bU_{k_t+1}(\bB_{k_t}\by_{\mathrm{min}}-\beta_{1}e_{1}) \\
		&= \bN^{-1}(\bV_{k_t}\bB_{k_t}^{\top}+\alpha_{k_t+1}\bv_{k_t+1}e_{k_t+1}^{\top})\left(\bB_{k_t}\by_{\mathrm{min}}-\beta_{1}e_{1}\right) \\
		&= \bN^{-1}[\bV_{k_t}\bB_{k_t}^{\top}(\bB_{k_t}\by_{\mathrm{min}}-\beta_{1}e_{1})+\alpha_{k_t+1}\beta_{k_t+1}\bv_{k_t+1}e_{k_t}^{\top}\by_{\mathrm{min}}] \\
		&= \mathbf{0},
	\end{align*}
	since $\bB_{k_t}^{\top}(\bB_{k_t}\by_{\mathrm{min}}-\beta_{1}e_{1})=\mathbf{0}$ and $\alpha_{k_t+1}\beta_{k_t+1}=0$. For the second condition, by \Cref{prop:Krylov} we have 
	\begin{align*}
		\bx_{\mathrm{min}}\in\mathrm{span}\{\bV_{k_t}\} &= 
		\mathrm{span}\{(\bN\bA^{\top}\bM^{-1}\bA)^i\bN\bA^{\top}\bM^{-1}\bb\}_{i=0}^{k_t-1}  \\
		&\subseteq \mathcal{R}(\bN\bA^{\top})=\bN\mathcal{N}(\bA)^{\perp} .
	\end{align*}
	Write $\bx_{\mathrm{min}}=\bN\bar{\bx}_{\mathrm{min}}$ with $\bar{\bx}_{\mathrm{min}}\in\mathcal{N}(\bA)^{\perp}$. For any $\bw\in\mathcal{N}(\bA)$, we have
	\begin{equation*}
		\langle\bx_{\mathrm{min}}, \bw\rangle_{\bN^{-1}} = \langle\bN\bar{\bx}_{\mathrm{min}}, \bw\rangle_{\bN^{-1}}
		= \langle\bar{\bx}_{\mathrm{min}}, \bw\rangle_{2}=0.
	\end{equation*}
	Therefore, it holds that $\bx_{\mathrm{min}}\perp_{\bN^{-1}}\mathcal{N}(\bA)$.
\end{proof}

\begin{lemma}\label{lem:lower_bnd}
	There exist a positive constant $C_{1}$ such that for any $k\geq 1$,
	\begin{equation}\label{res_norm_bound}
		\|\bA^{\top}\bM^{-1}(\bA\bx_{k-1}-\bb)\|_{\bN} = \|\bB_{k}^{\top}(\bB_{k}\bar{\by}_{k-1}-\beta_{1}e_{1})\|_2 \geq C_{1} >0
	\end{equation}
\end{lemma}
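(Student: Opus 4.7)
The plan is threefold: derive the equality by direct gen-GKB algebra, prove the invariant $\phi_{k-1} := \|\bA\bx_{k-1} - \bb\|_{\bM^{-1}}^2 \geq \tau m$ for every $k \geq 1$ by induction, then conclude the lower bound by a compactness argument on $\mathrm{span}\{\bV_{k_t}\}$. The anticipated main obstacle is the inductive step, which requires both consistent use of the unified notation of \Cref{rem:not} and a careful verification that the algorithm's step-length policy yields $\gamma_k \leq 1$.

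For the identity, \cref{GKB23} gives $\bA\bx_{k-1} - \bb = \bU_{k+1}(\bB_k\bar{\by}_{k-1} - \beta_1 e_1)$, and \cref{GKB33} yields
$\bA^{\top}\bM^{-1}(\bA\bx_{k-1} - \bb) = \bN^{-1}(\bV_k\bB_k^{\top} + \alpha_{k+1}\bv_{k+1}e_{k+1}^{\top})(\bB_k\bar{\by}_{k-1} - \beta_1 e_1)$.
The rank-one correction contracts to $\alpha_{k+1}\beta_{k+1}(e_k^{\top}\bar{\by}_{k-1})\bv_{k+1}$, which vanishes in both cases of \Cref{rem:not}, exactly as in the proof of \Cref{lem:proj_F}. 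Taking the $\bN$-norm and using $\bV_k^{\top}\bN^{-1}\bV_k = I$ yields the claimed equality.

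For the invariant, the base case is $\phi_0 = \|\bb\|_{\bM^{-1}}^2 > \tau m$ by \Cref{assump1}. For the inductive step, using $\bA\bx_k - \bb = \bU_{k+1}(\bB_k\by_k - \beta_1 e_1)$ with $\by_k = \bar{\by}_{k-1} + \gamma_k\Delta\by_k$, and substituting the second block of the projected Newton equation $(\bB_k\bar{\by}_{k-1} - \beta_1 e_1)^{\top}\bB_k\Delta\by_k = (\tau m - \phi_{k-1})/2$, a direct quadratic expansion gives
\[
\phi_k - \tau m = (1 - \gamma_k)(\phi_{k-1} - \tau m) + \gamma_k^2\|\bB_k\Delta\by_k\|_2^2,
\]
which is non-negative whenever $\gamma_k \in (0, 1]$. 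The algorithm guarantees this range: lines 9--12 of \Cref{alg:PNTM} enforce $\gamma_{\mathrm{init}} \leq 1$, and the backtracking factor $\eta \in (0, 1)$ only shrinks further.

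For the lower bound, the Armijo condition gives $h(\bx_{k-1}, \lambda_{k-1}) \leq h_0 := h(\bx_0, \lambda_0)$, which via the second component of $F$ yields $\phi_{k-1} \leq \tau m + 2\sqrt{2h_0}$. Combined with $\bA\bV_{k_t} = \bU_{k_t+1}\bB_{k_t}$ and the full column rank of $\bB_{k_t}$, we obtain a uniform bound $\|\bx_{k-1}\|_{\bN^{-1}} = \|\bar{\by}_{k-1}\|_2 \leq C$. Hence $\{\bx_{k-1}\}$ lies in the compact set $K := \{\bx \in \mathrm{span}\{\bV_{k_t}\} : \tau m \leq \|\bA\bx - \bb\|_{\bM^{-1}}^2,\ \|\bx\|_{\bN^{-1}} \leq C\}$, which by the left inequality in \Cref{assump1} excludes $\bx_{\min}$. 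The continuous function $\|\bA^{\top}\bM^{-1}(\bA\bx - \bb)\|_{\bN}$ vanishes on $\mathrm{span}\{\bV_{k_t}\}$ only at $\bx_{\min}$ (by \Cref{lem:solv_ls}), so it attains a positive minimum $C_1$ on $K$.
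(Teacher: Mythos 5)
Your proof is correct, and it reproduces the paper's first two steps essentially verbatim: the identity via \cref{GKB23}--\cref{GKB33} with the vanishing rank-one term, and the discrepancy invariant $\|\bA\bx_{k-1}-\bb\|_{\bM^{-1}}^2\geq\tau m$ by induction using the second block row of the projected Newton system together with $\gamma_k\leq 1$ (your exact identity $\phi_k-\tau m=(1-\gamma_k)(\phi_{k-1}-\tau m)+\gamma_k^2\|\bB_k\Delta\by_k\|_2^2$ is a slightly sharper way of writing the same computation). Where you genuinely diverge is the final step. The paper splits into $k<k_t$, handled by the crude bound $\|\bB_k^{\top}(\bB_k\bar{\by}_{k-1}-\beta_1e_1)\|_2\geq\sigma_{\min}(\underline{\bB}_{k_t})\sqrt{\tau m}$, and $k\geq k_t$, handled by a subsequence contradiction through \Cref{lem:ls_lim} and \Cref{lem:solv_ls}; notably that argument never needs the iterates to be bounded, since convergence of the normal-equation residual to zero by itself forces $\bar{\by}_{k_j-1}\to\by_{\min}$. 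You instead give a unified extreme-value argument on a compact set $K\subseteq\mathrm{span}\{\bV_{k_t}\}$, which requires you to first establish an a priori bound on $\|\bar{\by}_{k-1}\|_2$; you do this correctly and self-containedly from the monotonicity of $h$ and the full column rank of $\bB_{k_t}$, thereby avoiding the circularity that would arise from invoking \Cref{lem:point_bnd} (which in the paper is proved \emph{after}, and using, this lemma). One small gloss: the claim that $\|\bA^{\top}\bM^{-1}(\bA\bx-\bb)\|_{\bN}$ vanishes on $\mathrm{span}\{\bV_{k_t}\}$ only at $\bx_{\min}$ follows not from \Cref{lem:solv_ls} alone but from combining it with the equality part of the lemma and the injectivity of $\bB_{k_t}^{\top}\bB_{k_t}$; the ingredients are all present, so this is a citation nuance rather than a gap. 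Your route buys a single uniform argument for all $k$ and dispenses with the subsequence machinery, at the cost of the extra boundedness estimate; the paper's route is slightly leaner on hypotheses in Case 2 but needs the case split.
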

\begin{proof}
	First, we get from \cref{id1} the first identity:
	\begin{align*}
		& \ \ \ \ \|\bA^{\top}\bM^{-1}(\bA\bx_{k-1}-\bb)\|_{\bN}^{2}  \\
		&= (\bA\bx_{k-1}-\bb)^{\top}\bM^{-1}\bA\bN\bN^{-1}\left((\bA\bx_{k-1}-\bb)^{\top}\bM^{-1}\bA\bN\right)^{\top} \\
		&= \left(\bB_{k}\bar{\by}_{k-1}-\beta_{1}e_{1}\right)^{\top}\bB_{k}\bV_{k}^{\top}\bN^{-1}\bV_{k}^{\top}\bB_{k}^{\top}\left(\bB_{k}\bar{\by}_{k-1}-\beta_{1}e_{1}\right) \\
		&= \|\bB_{k}^{\top}\left(\bB_{k}\bar{\by}_{k-1}-\beta_{1}e_{1}\right)\|_{2}^{2} .
	\end{align*}
	Then, we prove
	\begin{equation}\label{res_bound}
		\|\bA\bx_{k-1}-\bb\|_{\bM^{-1}} = \|\bB_{k}\bar{\by}_{k-1}-\beta_{1}e_{1}\|_2 \geq \sqrt{\tau m} 
	\end{equation}
	by mathematical induction. For $k=1$, we have $\|\bA\bx_0-\bb\|_{\bM^{-1}}=\|\bb\|_{\bM^{-1}}>\sqrt{\tau m}$ since $\bx_0=\mathbf{0}$. Suppose $\|\bA\bx_{k-1}-\bb\|_{\bM^{-1}}\geq \sqrt{\tau m}$ for $k\geq 1$. We have
	\begin{align*}
		& \ \ \ \ \|\bA\bx_{k}-\bb\|_{\bM^{-1}}^2
		= \|\bA\bV_{k}(\bar{\by}_{k-1}+\gamma_{k}\Delta\by_{k})-\bb\|_{\bM^{-1}}^2 \\
		&= \|\bA\bx_{k-1}-\bb\|_{\bM^{-1}}^2+\gamma_{k}^2\|\bA\bV_{k}\Delta\by_{k}\|_{\bM^{-1}}^2+2\gamma_{k}(\bA\bx_{k-1}-b)^{\top}\bM^{-1}\bA\bV_{k}\Delta\by_{k} \\
		&= \|\bB_{k}\bar{\by}_{k-1}-\beta_{1}e_{1}\|_{2}^{2}+\gamma_{k}^2\|\bA\bV_{k}\Delta\by_{k}\|_{\bM^{-1}}^2+2\gamma_{k}(\bB_{k}\bar{\by}_{k-1}-\beta_{1}e_{1})^{\top}\bB_{k}\Delta\by,
	\end{align*}
	since 
	\begin{align*}
		(\bA\bx_{k-1}-b)^{\top}\bM^{-1}\bA\bV_{k} 
		&= (\bB_{k}\bar{\by}_{k-1}-\beta_{1}e_{1})^{\top}\bU_{k+1}^{\top}\bM^{-1}\bU_{k+1}\bB_{k} \\
		&= (\bB_{k}\bar{\by}_{k-1}-\beta_{1}e_{1})^{\top}\bB_{k}.
	\end{align*}
	Writing $J^{(k)}(\bar{\by}_{k-1},\lambda_{k-1})\begin{pmatrix}
		\Delta\by_{k} \\ \Delta\lambda_k
	\end{pmatrix}=-F^{(k)}(\bar{\by}_{k-1},\lambda_{k-1})$ in the matrix form
	and using $\|\bB_{k}\bar{\by}_{k-1}-\beta_{1}e_{1}\|_2 \geq \sqrt{\tau m}$, we get from the second equality of the above equation that 
	$
		(\bB_{k}\bar{\by}_{k-1}-\beta_{1}e_{1})^{\top}\bB_{k}\Delta\by = 
		-\frac{1}{2}\left(\|\bB_{k}\bar{\by}_{k-1}-\beta_{1}e_{1}\|_{2}^2-\tau m \right) \leq 0.
	$
	Since $\gamma_{k}\leq 1$, we get
	\begin{align*}
		& \ \ \ \ \|\bA\bx_{k}-\bb\|_{\bM^{-1}}^2 \\
		&\geq \|\bB_{k}\bar{\by}_{k-1}-\beta_{1}e_{1}\|_{2}^{2}+\gamma_{k}^2\|\bA\bV_{k}\Delta\by_{k}\|_{\bM^{-1}}^2-\left(\|\bB_{k}\bar{\by}_{k-1}-\beta_{1}e_{1}\|_{2}^2-\tau m \right) \\
		&= \tau m + \gamma_{k}^2\|\bA\bV_{k}\Delta\by_{k}\|_{\bM^{-1}}^2 \geq \tau m . 
	\end{align*}
	Therefore, we prove \cref{res_bound}. 
	
	To obtain the lower bound in \cref{res_norm_bound}, we investigate two cases: $k< k_t$ and $k\geq k_t$. 

	\noindent \textbf{Case 1: $k< k_t$.} For this case, we have
	\begin{align*}
		\|\bB_{k}^{\top}\left(\bB_{k}\bar{\by}_{k-1}-\beta_{1}e_{1}\right)\|_{2}
		\geq \sigma_{\min}(\bB_{k})\|\bB_{k}\bar{\by}_{k-1}-\beta_{1}e_{1}\|_{2}
		\geq \sigma_{\min}(\underline{\bB}_{k_t})\sqrt{\tau m} > 0,
	\end{align*}
	where $\sigma_{\min}(\cdot)$ is the smallest singular value of a matrix, and $\underline{\bB}_{k_t}$ is the first $k_t\times k_t$ part of $\bB_{k_t}$.

	\noindent \textbf{Case 2: $k\geq k_t$.} For this case, we can write $\bx_{k-1}$ as $\bx_{k-1}=\bV_{k_t}\bar{\by}_{k-1}$. Remember that $\bar{\by}_{k-1}=\by_{k-1}$ if $k> k_t$. We first prove $\|\bB_{k_t}^{\top}(\bB_{k_t}\bar{\by}_{k-1}-\beta_{1}e_{1})\|_2\neq 0$. If it is not true, then $\bar{\by}_{k-1}=\argmin_{\by}\|\bB_{k_t}\by-\beta_{1}e_{1}\|_2$. By \Cref{lem:solv_ls}, $\bx_{k-1}$ is the solution to \cref{ls_MN}. Thus, it must hold that $\|\bA\bx_{k-1}-\bb\|_{\bM^{-1}}<\sqrt{\tau m}$ by \Cref{assump1}, which contradicts \cref{res_bound}. Now suppose the lower bound in \cref{res_norm_bound} is not true. Then there exists a subsequence $\{\bar{\by}_{k_j-1}\}$ with $k_j\geq k_t$ such that 
	$
		\lim_{j\rightarrow \infty}\|\bB_{k_t}^{\top}(\bB_{k_t}\bar{\by}_{k_j-1}-\beta_{1}e_{1})\|_2 = 0.
	$
	By \Cref{lem:ls_lim}, we have 
	$
		\lim_{j\rightarrow \infty}\bar{\by}_{k_j-1} = \by_{\mathrm{min}} := \argmin_{\by}\|\bB_{k_t}\by-\beta_{1}e_{1}\|_2,
	$
	leading to 
	$
		\lim_{j\rightarrow \infty}\bx_{k_j-1} = \lim_{k_j\rightarrow \infty}\bV_{k_t}\bar{\by}_{k_j-1}
		= \bV_{k_t}\by_{\mathrm{min}} .
	$
	It follows from \Cref{lem:solv_ls} that $\bV_{k_t}\by_{\mathrm{min}}=\bx_{\mathrm{min}}$, which is the solution to \cref{ls_MN}. Therefore, it must hold 
	$
		\lim_{j\rightarrow \infty}\|\bA\bx_{k_j-1}-\bb\|_{\bM^{-1}} = \|\bA\bx_{\mathrm{min}}-\bb\|_{\bM^{-1}} < \sqrt{\tau m}
	$
	by \Cref{assump1}, which contradicts \cref{res_bound}. Summarizing both the two cases, the desired result is proved.
\end{proof}

\begin{lemma}\label{lem:point_bnd}
	The points $\{(\bx_{k},\lambda_k)\}_{i=0}^{\infty}$ generated by the \textsf{PNT} algorithm lie in a bounded set of $\mathbb{R}^{n}\times\mathbb{R}^{+}$.
\end{lemma}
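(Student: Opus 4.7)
The plan is to exploit monotonicity of the merit function $h$ along the iterates, which gives uniform upper bounds on both the data-fit discrepancy and the Lagrangian gradient; coupled with the strictly positive lower bound from \Cref{lem:lower_bnd} and the fact that every iterate lies in the fixed, finite-dimensional subspace $\mathrm{span}(\bV_{k_t})$, this will yield the claimed boundedness. First, I would combine the Armijo condition \eqref{eq:sufficient_decrease} with \Cref{thm:descent} to rewrite it as
\[
    h(\bx_k,\lambda_k) \le (1-2c\gamma_k)\,h(\bx_{k-1},\lambda_{k-1}).
\]
Since $c=10^{-4}$ and $\gamma_k\le \gamma_{\text{init}}\le 1$, the factor $1-2c\gamma_k$ lies in $(0,1]$, so $\{h(\bx_k,\lambda_k)\}$ is monotonically non-increasing and bounded above by $C_0:=h(\bx_0,\lambda_0)<\infty$.

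Next I would bound $\{\bx_k\}$. From $h(\bx_k,\lambda_k)\le C_0$ the second summand in $h$ forces $\bigl|\tfrac{1}{2}\|\bA\bx_k-\bb\|_{\bM^{-1}}^2-\tfrac{\tau m}{2}\bigr|$ to be uniformly bounded, hence so is $\|\bA\bx_k-\bb\|_{\bM^{-1}}$. Using the convention of \Cref{rem:not}, every iterate satisfies $\bx_k\in\mathrm{span}\{\bV_{k_t}\}$, so I can write $\bx_k=\bV_{k_t}\bar{\by}_k$ by padding $\by_k$ with zeros when $k<k_t$. Arguing as in the proof of \Cref{lem:proj}, this gives the matrix identity $\|\bA\bx_k-\bb\|_{\bM^{-1}}^2=\|\bB_{k_t}\bar{\by}_k-\beta_1 e_1\|_2^2$ (with $\underline{\bB}_{k_t}$ in place of $\bB_{k_t}$ in the degenerate case $\beta_{k_t+1}=0$). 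Since $\bB_{k_t}$ has full column rank, its smallest singular value is strictly positive, and the bound on $\|\bB_{k_t}\bar{\by}_k-\beta_1 e_1\|_2$ translates into a uniform bound on $\|\bar{\by}_k\|_2$ and therefore on $\|\bx_k\|_2$.

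Finally I would bound $\{\lambda_k\}$ from above (positivity being enforced by the algorithm). The first summand of $h$ yields
\[
    \|\lambda_k\bA^\top\bM^{-1}(\bA\bx_k-\bb)+\bN^{-1}\bx_k\|_{\bN}\le \sqrt{2C_0},
\]
so by the triangle inequality
\[
    \lambda_k\,\|\bA^\top\bM^{-1}(\bA\bx_k-\bb)\|_{\bN}\le \sqrt{2C_0}+\|\bN^{-1}\bx_k\|_{\bN},
\]
where the right-hand side is uniformly bounded by the previous step. Invoking \Cref{lem:lower_bnd} to bound the coefficient on the left below by $C_1>0$ then yields a uniform upper bound on $\lambda_k$. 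The main obstacle I anticipate is the bookkeeping needed to make the padding convention between $\by_k\in\mathbb{R}^k$ and $\bar{\by}_k\in\mathbb{R}^{k_t}$ fully consistent across the three regimes $k<k_t$, $k=k_t$ (with the $\beta_{k_t+1}=0$ subcase), and $k>k_t$, so that the identity $\bA\bx_k-\bb=\bU_{k_t+1}(\bB_{k_t}\bar{\by}_k-\beta_1 e_1)$ can be applied uniformly; once this is settled the logical chain merit bound $\Rightarrow$ iterate bound $\Rightarrow$ multiplier bound is mechanical.
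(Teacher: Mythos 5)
Your proposal is correct and follows essentially the same route as the paper: monotonicity of $h$ along the iterates, the second component of $h$ plus the full column rank of $\bB_{k_t}$ (and $\bV_{k_t}$) to bound $\bx_k$, and the lower bound $C_1$ from \Cref{lem:lower_bnd} to bound $\lambda_k$. The only difference is presentational—the paper phrases the argument as a contradiction via an unbounded subsequence, while you give the direct quantitative contrapositive—so no new ideas are involved.
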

\begin{proof}
	First notice that $h(\bx_{0},\lambda_0)\geq h(\bx_{1},\lambda_1)\geq \cdots$. We only need to prove $\{(\bx_{k},\lambda_k)\}_{k\geq k_t}$ is bounded above. In this case, notice that $\bx_{k}=\bV_{k_t}\by_k$ and
	\[
		h(\bx_k,\lambda_k) 
		= \frac{1}{2}[\|\lambda_k\bA^{\top}\bM^{-1}(\bA\bx_k-\bb)+\bN^{-1}\bx_k\|_{\bN}^{2} + (\frac{1}{2}\|\bB_{k_t}\by_k-\beta_{1}e_1\|_{2}^{2}-\frac{\tau m}{2})^2].
	\]
	 If the points do not lie in a bounded set, there exists a subsequence $\{(\bx_{k_j},\lambda_{k_j})\}$ with $k_j\geq k_t$ such that $(\bx_{k_j},\lambda_{k_j})\rightarrow \infty$. If $\|\bx_{k_j}\|_2\rightarrow \infty$, then $\|\by_{k_j}\|_2\rightarrow \infty$, since $\bx_{k_j}=\bV_{k_t}\by_{k_j}$ and $\bV_{k_t}$ has full column rank. This leads to $\|\bB_{k_t}\by_{k_j}\|_{2}\rightarrow \infty$ since $\bB_{k_t}$ has full column rank. It follows that the second term of $h(\bx_{k_j},\lambda_{k_j})$ tends to infinity and $h(\bx_{k_j},\lambda_{k_j})\rightarrow \infty$, a contradiction. Therefore, it must hold that $\|\bx_{k_j}\|_2$ is bounded above and $\lambda_{k_j}\rightarrow \infty$. By \Cref{lem:lower_bnd}, we have $\|\lambda_{k_j}\bA^{\top}\bM^{-1}(\bA\bx_{k_j}-\bb)\|_{\bN}\geq \lambda_{k_j}C_1$. Notice that $\{\bN^{-1}\bx_{k_j}\}$ lie in a bounded set. It follows that $\|\lambda_{k_j}\bA^{\top}\bM^{-1}(\bA\bx_{k_j}-\bb)+\bN^{-1}\bx_{k_j}\|_{\bN}\rightarrow \infty$ and $h(\bx_{k_j},\lambda_{k_j})\rightarrow \infty$, also a contradiction. 
\end{proof}

\begin{lemma}\label{lem:inverse_bnd}
	There exist a positive constant $C_{2}<+\infty$ such that for any $k\geq 1$,
	\begin{equation}
		\|J^{(k)}(\bar{\by}_{k-1},\lambda_{k-1})^{-1}\|_2 \leq C_{2} .
	\end{equation}
\end{lemma}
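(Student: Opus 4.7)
\begin{proofof}{\Cref{lem:inverse_bnd} (proposal)}
The plan is to exploit the bordered structure of $J^{(k)}(\bar{\by}_{k-1},\lambda_{k-1})$ and apply the block inverse formula via the Schur complement. Throughout, write
\[
	A_k := \lambda_{k-1}\bB_{k}^{\top}\bB_{k}+\bI, \qquad
	\bb_{k} := \bB_{k}^{\top}(\bB_{k}\bar{\by}_{k-1}-\beta_{1}e_{1}),
\]
so that by \cref{proj_J_com}, $J^{(k)}(\bar{\by}_{k-1},\lambda_{k-1})=\begin{pmatrix} A_k & \bb_{k} \\ \bb_{k}^{\top} & 0 \end{pmatrix}$.

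First I would establish three uniform quantitative facts. (i) Since $\lambda_{k-1}\geq 0$, $A_k$ is symmetric positive definite with $\lambda_{\min}(A_k)\geq 1$, hence $\|A_k^{-1}\|_2\leq 1$. (ii) The matrix $\bB_{k}$ is uniformly bounded in norm: for $k\leq k_t$ it is the leading $(k+1)\times k$ block of $\bB_{k_t}$, and for $k>k_t$ we have $\bB_{k}=\bB_{k_t}$ by the convention in \Cref{rem:not}; in either case $\|\bB_{k}\|_2\leq \|\bB_{k_t}\|_2$. (iii) The iterates $\{\lambda_{k-1}\}$ are bounded above by some constant $\Lambda<\infty$ by \Cref{lem:point_bnd}. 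Combining (ii) and (iii) gives a uniform upper bound $\|A_k\|_2\leq \Lambda\|\bB_{k_t}\|_2^2+1=:M$.

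Next, using $A_k\succ 0$, I would write the block inverse explicitly as
\[
	J^{(k)}(\bar{\by}_{k-1},\lambda_{k-1})^{-1}
	= \begin{pmatrix}
		A_k^{-1}+\tfrac{1}{s_k}A_k^{-1}\bb_{k}\bb_{k}^{\top}A_k^{-1} & -\tfrac{1}{s_k}A_k^{-1}\bb_{k} \\
		-\tfrac{1}{s_k}\bb_{k}^{\top}A_k^{-1} & \tfrac{1}{s_k}
	\end{pmatrix},
	\quad s_k:=\bb_{k}^{\top}A_k^{-1}\bb_{k}.
\]
This representation makes the nonsingularity condition transparent: $J^{(k)}$ is invertible precisely when $s_k>0$, which in turn is equivalent to $\bb_{k}\neq\mathbf{0}$ because $A_k^{-1}$ is SPD. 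But \Cref{lem:lower_bnd} gives exactly the stronger quantitative bound $\|\bb_{k}\|_2\geq C_{1}>0$, so $s_k\geq \|\bb_{k}\|_2^{2}/\|A_k\|_2\geq C_{1}^{2}/M>0$ uniformly in $k$. This simultaneously settles \Cref{prop:nonsingular} and gives a uniform lower bound on $s_k$.

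Finally I would combine the three ingredients $\|A_k^{-1}\|_2\leq 1$, $\|\bb_{k}\|_2\leq \|\bB_{k}\|_2^{2}(\|\bar{\by}_{k-1}\|_2+\beta_1)$ (which is bounded since $\{\bar{\by}_{k-1}\}$ inherits the boundedness of $\{\bx_k\}$ from \Cref{lem:point_bnd} together with $\bx_k=\bV_{k}\by_k$ and the full column rank of $\bV_{k}$), and $s_k\geq C_{1}^{2}/M$ through the triangle inequality applied block by block in the explicit inverse above, yielding a constant $C_{2}$ depending only on $\Lambda$, $M$, $C_{1}$ and $\|\bB_{k_t}\|_2$ such that $\|J^{(k)}(\bar{\by}_{k-1},\lambda_{k-1})^{-1}\|_2\leq C_{2}$. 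The only delicate point is the uniform lower bound on the Schur complement $s_k$; everything else is a routine estimate. That lower bound rests squarely on \Cref{lem:lower_bnd} for the numerator and on \Cref{lem:point_bnd} for the denominator, which is why those two lemmas were proved first.
\end{proofof}
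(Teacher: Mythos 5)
Your proposal is correct and takes essentially the same route as the paper's proof: the same bordered splitting of $J^{(k)}(\bar{\by}_{k-1},\lambda_{k-1})$ into $A_k:=\lambda_{k-1}\bB_{k}^{\top}\bB_{k}+\bI$ and the border vector $\bd_k:=\bB_{k}^{\top}(\bB_{k}\bar{\by}_{k-1}-\beta_{1}e_{1})$, the bound $\|A_k^{-1}\|_2\leq 1$ from $\lambda_{k-1}\geq 0$, the boundedness of $\lambda_{k-1}$ and $\bar{\by}_{k-1}$ via \Cref{lem:point_bnd}, and the uniform positivity of the Schur complement $s_k=\bd_k^{\top}A_k^{-1}\bd_k\geq C_1^2/\|A_k\|_2$ via \Cref{lem:lower_bnd}; the paper merely packages the inverse as a three-factor block (LDU-type) product and bounds each factor, whereas you write the $2\times 2$ block inverse explicitly, which is materially the same argument. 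One small slip worth fixing: the correct block inverse has $(1,1)$ block $A_k^{-1}-s_k^{-1}A_k^{-1}\bd_k\bd_k^{\top}A_k^{-1}$, off-diagonal blocks $+s_k^{-1}A_k^{-1}\bd_k$ and $+s_k^{-1}\bd_k^{\top}A_k^{-1}$, and $(2,2)$ entry $-s_k^{-1}$ (your signs are flipped), but since your estimates only use the magnitudes of the blocks, the final bound is unaffected.
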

\begin{proof}
	First, we prove that $J^{(k)}(\bar{\by}_{k-1},\lambda_{k-1})$ is always nonsingular. Write it as 
	\begin{equation*}
		J^{(k)}(\bar{\by}_{k-1},\lambda_{k-1}) 
		= \begin{pmatrix}
			\lambda_{k-1}\bB_{k_t}^{\top}\bB_{k}+\bI & \bB_{k}^{\top}(\bB_{k_t}\bar{\by}_{k-1}-\beta_{1}e_{1}) \\
			(\bB_{k}\bar{\by}_{k-1}-\beta_{1}e_{1})^{\top}\bB_{k} & 0
		\end{pmatrix}
		=: \begin{pmatrix}
			\bC_{k} & \bd_{k} \\
			\bd_{k}^{\top} & 0
		\end{pmatrix} 
	\end{equation*}
	and notice that 
	\begin{equation*}
		\begin{pmatrix}
			\bC_{k} & \bd_{k} \\
			\bd_{k}^{\top} & 0
		\end{pmatrix} 
		= \begin{pmatrix}
			\bI & \mathbf{0} \\
			-\bd_{k}^{\top}\bC_{k}^{-1} & 1
		\end{pmatrix}^{-1}
		\begin{pmatrix}
			\bC_{k} & \mathbf{0} \\
			\mathbf{0} & -\bd_{k}^{\top}\bC_{k}^{-1}\bd_{k}
		\end{pmatrix}
		\begin{pmatrix}
			\bI & -\bC_{k}^{-1}\bd_{k} \\
			\mathbf{0} & 1
		\end{pmatrix}^{-1} .
	\end{equation*}
	It follows that 
	\begin{equation}\label{matrix_inv}
		\begin{pmatrix}
			\bC_{k} & \bd_{k} \\
			\bd_{k}^{\top} & 0
		\end{pmatrix}^{-1}
		= \begin{pmatrix}
			\bI & -\bC_{k}^{-1}\bd_{k} \\
			\mathbf{0} & 1
		\end{pmatrix}
		\begin{pmatrix}
			\bC_{k}^{-1} & \mathbf{0} \\
			\mathbf{0} & -(\bd_{k}^{\top}\bC_{k}^{-1}\bd_{k})^{-1}
		\end{pmatrix}
		\begin{pmatrix}
			\bI & \mathbf{0} \\
			-\bd_{k}^{\top}\bC_{k}^{-1} & 1
		\end{pmatrix} ,
	\end{equation}
	Since $\bC_{k}$ is positive definite and $\|\bd_{k}\|_2\geq C_1>0$.

	To give an upper bound on $\|J^{(k)}(\bar{\by}_{k-1},\lambda_{k-1})^{-1}\|_2$, we only need to consider $k\geq k_t$, where $\bB_{k}=\bB_{k_t}$ in $J^{(k)}(\bar{\by}_{k-1},\lambda_{k-1})$.	Since $\sigma_{\min}(\bC_{k})=\sigma_{\min}(\lambda_{k-1}\bB_{k_t}^{\top}\bB_{k_t}+\bI)\geq 1$, we have $\|\bC_{k}^{-1}\|_{2}\leq 1$. 
	By \Cref{lem:point_bnd}, there exist a positive constant $C_3<+\infty$ such that $\lambda_{k}\leq C_3$, thereby
	\[
		\sigma_{\max}(\bC_{k}) \leq \sigma_{\max}(\lambda_{k-1}\bB_{k_t}^{\top}\bB_{k_t}) + \sigma_{\max}(\bI)
		\leq C_3\sigma_{\max}(\bB_{k_t}^{\top}\bB_{k_t}) + 1 =: \bar{C}_3 .
	\]
	By \Cref{lem:lower_bnd} we have $\|\bd_{k}\|_{2}=\|\bB_{k_t}^{\top}(\bB_{k_t}\bar{\by}_{k-1}-\beta_{1}e_{1})\|_{2}\geq C_1$. 
	On the other hand, by \Cref{lem:point_bnd} we know that $\|\bx_{k-1}\|_2=\|\bV_{k_t}\bar{\by}_{k-1}\|_2$ is bounded above, thereby $\|\bar{\by}_{k-1}\|_{2}$ is bounded above since $\bV_{k_t}$ has full column rank. Thus, there exists a positive constant $\bar{C}_1$ such that $\|\bd_{k}\|_2\leq\bar{C}_1$, leading to
	\[
		\|\bC_{k}^{-1}\bd_{k}\|_{2} \leq 
		\|\bC_{k}^{-1}\|_2\|\bd_{k}\|_{2}
		\leq \bar{C}_{1}\sigma_{\min}(\bC_{k})^{-1}
		\leq \bar{C}_{1} ,
	\]
	and
	\[
		\bd_{k}^{\top}\bC_{k}^{-1}\bd_{k} \geq 
		\sigma_{\min}(\bC_{k}^{-1})\|\bd_{k}\|_{2}^2 =
		\sigma_{\max}(\bC_{k})^{-1}\|\bd_{k}\|_{2}^2
		\geq C_{1}^2/\bar{C}_3 > 0 .
	\]
	Therefore, we have
	\begin{align*}
		\left\|\begin{pmatrix}
			\mathbf{0} & -\bC_{k}^{-1}\bd_{k} \\
			\mathbf{0} & 0
		\end{pmatrix}\right\|_{2}^2 
		&=
		\left\|\begin{pmatrix}
			\mathbf{0} & -\bC_{k}^{-1}\bd_{k} \\
			\mathbf{0} & 0
		\end{pmatrix}^{\top}\begin{pmatrix}
			\mathbf{0} & -\bC_{k}^{-1}\bd_{k} \\
			\mathbf{0} & 0
		\end{pmatrix}\right\|_{2}\\
		&= 
		\left\|\begin{pmatrix}
			\mathbf{0} & \\
			 & \|\bC_{k}^{-1}\bd_{k}\|_{2}^2
		\end{pmatrix}\right\|_{2}\leq \bar{C}_{1}^2
	\end{align*}
	and 
	\begin{equation*}
		\left\|\begin{pmatrix}
			\bI & -\bC_{k}^{-1}\bd_{k} \\
			\mathbf{0} & 1
		\end{pmatrix}\right\|_{2} \leq 
		\left\|\begin{pmatrix}
			\bI &  \\
			 & 1
		\end{pmatrix}\right\|_{2} +
		\left\|\begin{pmatrix}
			\mathbf{0} & -\bC_{k}^{-1}\bd_{k} \\
			\mathbf{0} & 0
		\end{pmatrix}\right\|_{2} \leq 1+\bar{C}_1 .
	\end{equation*}
	Similarly, we have
	\begin{equation*}
		\left\|\begin{pmatrix}
		\bC_{k}^{-1} & \mathbf{0} \\
		\mathbf{0} & -(\bd_{k}^{\top}\bC_{k}^{-1}\bd_{k})^{-1}
	\end{pmatrix}\right\|_2 \leq \max\{1, \bar{C}_{3}/C_{1}^{2}\} .
	\end{equation*}
	Using the expression of $J^{(k)}(\bar{\by}_{k-1},\lambda_{k-1})$ in \cref{matrix_inv}, we finally obtain the desired result.
\end{proof}

\begin{lemma}\label{lem:gamma}
	There exists a positive constant $C_4$ such that for any $k\geq 1$,
	\begin{equation}
		\gamma_{k} \geq C_{4} > 0 .
	\end{equation}
\end{lemma}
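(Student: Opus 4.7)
The plan is to invoke the Armijo theorem (\Cref{thm:Armijo}) applied to $h$ along the direction $(\Delta\bx_k,\Delta\lambda_k)$, combine it with the uniform bounds from \Cref{lem:inverse_bnd,lem:point_bnd}, and then handle the extra subtlety introduced by the initial step cap $\gamma_{\mathrm{init}}$.

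First, \Cref{thm:descent} gives the descent identity $\nabla h(\bx_{k-1},\lambda_{k-1})^{\top}(\Delta\bx_k^{\top},\Delta\lambda_k)^{\top}=-2h(\bx_{k-1},\lambda_{k-1})<0$, where strict negativity uses the working hypothesis $h(\bx_k,\lambda_k)>0$ of \Cref{thm:conv}. I would bound the direction norm via \Cref{lem:inverse_bnd,lem:merit_comp}:
\[
\|(\Delta\by_k^{\top},\Delta\lambda_k)^{\top}\|_2 \leq C_2\,\|F^{(k)}(\bar{\by}_{k-1},\lambda_{k-1})\|_2 = C_2\sqrt{2h(\bx_{k-1},\lambda_{k-1})},
\]
and then combine with $\|\Delta\bx_k\|_2\leq\sqrt{\lambda_{\max}(\bN)}\|\Delta\by_k\|_2$ (which follows from $\bV_k^{\top}\bN^{-1}\bV_k=\bI$) to obtain $\|(\Delta\bx_k^{\top},\Delta\lambda_k)^{\top}\|_2^2\leq 2C_2^2\max\{\lambda_{\max}(\bN),1\}\,h(\bx_{k-1},\lambda_{k-1})$. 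The Lipschitz constant of $\nabla h$ is uniformly controlled since $h$ is polynomial in $(\bx,\lambda)$ and \Cref{lem:point_bnd} confines all iterates to a compact set; together with the step bound just derived, the entire line-search segment lies in a fixed compact neighborhood on which $\|\nabla^2 h\|_2\leq Z<\infty$. Plugging everything into \Cref{thm:Armijo}, the factors of $h(\bx_{k-1},\lambda_{k-1})$ cancel and yield
\[
\gamma_{\max}\geq \tilde{\gamma}:=\frac{2(1-c)}{Z\,C_2^2\max\{\lambda_{\max}(\bN),1\}}>0
\]
uniformly in $k$. The backtracking routine then produces $\gamma_k\geq\min\{\gamma_{\mathrm{init}},\eta\tilde{\gamma}\}$: either the Armijo condition already holds at $\gamma_{\mathrm{init}}$ and no reduction occurs, or the last rejected value $\gamma_k/\eta$ exceeds $\gamma_{\max}\geq\tilde{\gamma}$.

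The remaining task is to bound $\gamma_{\mathrm{init}}$ from below, and this is the main obstacle. When $\Delta\lambda_k\geq 0$ we simply have $\gamma_{\mathrm{init}}=1$. The delicate case is $\Delta\lambda_k<0$, where $\gamma_{\mathrm{init}}=\min\{1,-\eta\lambda_{k-1}/\Delta\lambda_k\}$; since the bound above gives $|\Delta\lambda_k|\leq C_2\sqrt{2h(\bx_0,\lambda_0)}$ uniformly, the problem reduces to establishing $\inf_k \lambda_k\geq c_{\lambda}>0$. The safeguard alone only guarantees $\lambda_k\geq(1-\eta)\lambda_{k-1}$, which does not preclude a geometric decay if this case triggers at every iteration. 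I would close this gap by a direct analysis of the projected Newton system \cref{proj_Newton1}: eliminating $\Delta\by_k$ via the Schur complement of $\lambda_{k-1}\bB_k^{\top}\bB_k+\bI$ produces an explicit expression for $\Delta\lambda_k$ in terms of $\lambda_{k-1}$, $\bar{\by}_{k-1}$, and the residual $r_k:=\bB_k\bar{\by}_{k-1}-\beta_1 e_1$ (whose norm satisfies $\|r_k\|_2^2\geq\tau m$ by the proof of \Cref{lem:lower_bnd}); using the boundedness of $\bar{\by}_{k-1}$ from \Cref{lem:point_bnd} one expects to show that $\lambda_{k-1}\downarrow 0$ along a subsequence forces $\Delta\lambda_k$ to be eventually nonnegative, contradicting the assumed collapse. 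Once $c_\lambda>0$ is secured, the desired constant is $C_4:=\min\{1,\eta\tilde{\gamma},\eta c_\lambda/(C_2\sqrt{2h(\bx_0,\lambda_0)})\}>0$.
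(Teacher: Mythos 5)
Your first two paragraphs are, in substance, the paper's own proof: use \Cref{thm:descent} to get the descent value $-2h(\bx_{k-1},\lambda_{k-1})$, invoke \Cref{thm:Armijo} so that backtracking returns $\gamma_k\ge\min\{\gamma_{\mathrm{init}},\eta\gamma_{\max}\}$, bound the Lipschitz constant of $\nabla h$ uniformly because $\nabla h$ is polynomial and the iterates stay in a bounded set (\Cref{lem:point_bnd}), and bound the direction by $\|(\Delta\by_k^{\top},\Delta\lambda_k)^{\top}\|_2\le C_2\|F^{(k)}(\bar{\by}_{k-1},\lambda_{k-1})\|_2=C_2\sqrt{2h(\bx_{k-1},\lambda_{k-1})}$ via \Cref{lem:inverse_bnd} and \Cref{lem:proj_F}, so that the factors of $h$ cancel; up to the harmless replacement of $\|\bV_{k_t}\|_2+1$ by $\max\{\sqrt{\lambda_{\max}(\bN)},1\}$, this is exactly how the paper derives its constant $C_4$.

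The gap is in your closing paragraph, and you essentially flag it yourself: the bound $\inf_k\lambda_k\ge c_\lambda>0$, which you need to control $\gamma_{\mathrm{init}}$ in the case $\Delta\lambda_k<0$, is never established. The Schur-complement sketch is a plan, not an argument: eliminating $\Delta\by_k$ gives $\Delta\lambda_k=\bigl[\tfrac12(\|r_k\|_2^2-\tau m)-\bd_k^{\top}\bC_k^{-1}(\lambda_{k-1}\bd_k+\bar{\by}_{k-1})\bigr]/(\bd_k^{\top}\bC_k^{-1}\bd_k)$ with $r_k=\bB_k\bar{\by}_{k-1}-\beta_1 e_1$, $\bd_k=\bB_k^{\top}r_k$, $\bC_k=\lambda_{k-1}\bB_k^{\top}\bB_k+\bI$, and since $\bd_k^{\top}\bar{\by}_{k-1}=\|r_k\|_2^2+\beta_1 e_1^{\top}r_k$, the sign of the numerator is not determined by $\|r_k\|_2^2\ge\tau m$ together with boundedness of $\bar{\by}_{k-1}$; so the claim that small $\lambda_{k-1}$ forces $\Delta\lambda_k\ge0$ does not follow from what you have, and the asserted constant $C_4$ is not yet proved. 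To be fair, the subtlety you are worrying about is one the paper's proof does not engage with either: it uses $\gamma_k\ge\min\{1,\,4(1-c)\eta h/(\zeta\|(\Delta\bx_k^{\top},\Delta\lambda_k)\|_2^2)\}$, i.e.\ it implicitly treats the backtracking as started from $\gamma_{\mathrm{init}}=1$ and ignores the positivity safeguard $\gamma_{\mathrm{init}}=\min\{1,-\eta\lambda_{k-1}/\Delta\lambda_k\}$ of \Cref{alg:PNTM}, whereas you correctly observe that the safeguard alone only yields $\lambda_k\ge(1-\eta)\lambda_{k-1}$ and so could in principle shrink $\gamma_{\mathrm{init}}$ geometrically. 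So your extra caution is a legitimate refinement of the statement's difficulty, but as submitted your proposal leaves that final step unproven.
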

\begin{proof}
	By \Cref{thm:Armijo} and \Cref{thm:descent}, at each iteration the Armijo backtracking line search must terminate in finite steps with a $\gamma_k$ satisfying
	\begin{equation*} \label{eq:lower}
		\gamma_k \geq \min\left\{1, \frac{4(1-c)\eta h(\bx_{k-1},\lambda_{k-1})}{\zeta(\bx_{k-1},\lambda_{k-1})\|(\Delta\bx_{k}^{\top},\Delta\lambda_{k})\|_{2}^{2}} \right\} ,
	\end{equation*}
	where $\zeta(\bx_{k-1},\lambda_{k-1})$ is the Lipschitz constant of $\nabla h$ at $(\bx_{k-1},\lambda_{k-1})$; see also \cite[pp. 122, Corollary 2.1]{blowey2003frontiers}. Now we prove $\zeta(\bx_{k-1},\lambda_{k-1})$ are bounded above. Notice that $\nabla h(\bx,\lambda) = J(\bx,\lambda)\widehat{N}F(\bx,\lambda)$. Thus, all the elements in the Jacobian of $\nabla h(\bx,\lambda)$ are polynomials of $(\bx,\lambda)$ with degrees not bigger than 4. Since $\{(\bx_{k-1},\lambda_{k-1})\}$ lie in a bounded set, the norms of the Jacobians of $\nabla h(\bx,\lambda)$ at the points $\{(\bx_{k-1},\lambda_{k-1})\}$ are bounded above. Therefore, the Lipschitz constants $\zeta(\bx_{k-1},\lambda_{k-1})$ are bounded above. 
	
	Let $\zeta(\bx_{k-1},\lambda_{k-1})\leq \zeta_0$ with $0<\zeta_0<+\infty$ for any $k\geq 1$. Then by \Cref{lem:inverse_bnd} and \Cref{lem:proj_F}, we have
	\begin{align*}
		\left\| \begin{pmatrix}
			\Delta\bx_{k} \\ \Delta\lambda_k
		\end{pmatrix} \right\|_2 
		&\leq 
		\left\|\begin{pmatrix}
			\bV_{k} &  \\
			 & 1
		\end{pmatrix}\right\|_2
		\left\|\begin{pmatrix}
			\Delta\by_{k} \\ \Delta\lambda_k
		\end{pmatrix}\right\|_2 \\
		&\leq (\|\bV_{k_t}\|_{2}+1)\|J^{(k)}(\bar{\by}_{k-1},\lambda_{k-1})^{-1}\|_2\|F^{(k)}(\bar{\by}_{k-1},\lambda_{k-1})\|_{2} \\
		&\leq C_{2}(\|\bV_{k_t}\|_{2}+1)(2h(\bx_{k-1},\lambda_{k-1}))^{1/2} .
	\end{align*}
	Then we obtain
	\begin{align*}
		\gamma_k \geq \min\left\{1, \frac{4(1-c)\eta h(\bx_{k-1},\lambda_{k-1})}{\zeta_0\|(\Delta\bx_{k}^{\top},\Delta\lambda_{k})^{\top}\|_{2}^{2}} \right\} 
		\geq \min\left\{1, \frac{2(1-c)\eta }{\zeta_0C_{2}^{2}(\|\bV_{k_t}\|_{2}+1)^2}\right\} =: C_{4} .
	\end{align*}
	The desired result is obtained.
\end{proof}

\begin{proofof}{\Cref{thm:conv}}
	By \Cref{lem:point_bnd}, the sequence $\{(\bx_{k},\lambda_k)\}_{k=1}^{\infty}$ is contained in a bounded set, thereby there exists a convergent subsequence $\{(\bx_{k_j},\lambda_{k_j})\}_{j=1}^{\infty}$. Suppose $(\bx_{k_j},\lambda_{k_j})\rightarrow (\widehat{\bx},\widehat{\lambda})$. It follows that $h(\bx_{k_j},\lambda_{k_j})\rightarrow h(\widehat{\bx},\widehat{\lambda})$ since $h(\bx,\lambda)$ is continuous. Note that $h(\bx_{k_j},\lambda_{k_j})$ is nonincreasing, thereby $h(\widehat{\bx},\widehat{\lambda})\leq h(\bx_{k_j},\lambda_{k_j})$ for any $k_j$. Thus, for any $\varepsilon>0$, there exist a $k_{\star}\in\mathbb{N}$ such that 
	$
		h(\bx_{k_j},\lambda_{k_j}) < h(\widehat{\bx},\widehat{\lambda})+\varepsilon, \ k_{j}>k_{\star}.
	$
	Select one $k_j$ that satisfies $k_{j}>k_{\star}$. For any $k\geq k_j$, we have
	$
		h(\bx_{k},\lambda_{k}) \leq h(\bx_{k_j},\lambda_{k_j}) < h(\widehat{\bx},\widehat{\lambda})+\varepsilon ,
	$
	which means that 
	$
		\lim_{k\rightarrow\infty}h(\bx_{k},\lambda_{k}) = h(\widehat{\bx},\widehat{\lambda}) .
	$
	The Armijo condition and \Cref{thm:descent} lead to
	$
		h(\bx_{k+1},\lambda_{k+1}) - h(\bx_{k},\lambda_{k}) \leq c\gamma_{k}\left(\Delta\bx_{k}^{\top},\Delta\lambda_{k}\right)\nabla h(\bx_{k},\lambda_{k}) \leq 0 .
	$
	Taking the limit on both sides leads to 
	$
		\lim_{k\rightarrow\infty}c\gamma_{k}\left(\Delta\bx_{k}^{\top},\Delta\lambda_{k}\right)\nabla h(\bx_{k},\lambda_{k}) = 0.
	$
	By \Cref{lem:gamma} we get $\lim_{k\rightarrow\infty}\left(\Delta\bx_{k}^{\top},\Delta\lambda_{k}\right)\nabla h(\bx_{k},\lambda_{k}) = 0$. Noticing by \Cref{thm:descent} that $-2h(\bx_{k},\lambda_{k})=\left(\Delta\bx_{k}^{\top},\Delta\lambda_{k}\right)\nabla h(\bx_{k},\lambda_{k})$, we get
	$
		h(\widehat{\bx},\widehat{\lambda}) = \lim_{k\rightarrow\infty}h(\bx_{k},\lambda_{k}) = 0 .
	$
	This proves the desired result.
\end{proofof}

Now we can give the convergence result of $(\bx_{k},\lambda_k)$.
\begin{corollary}\label{coro:converge}
	The sequence $\{(\bx_{k},\lambda_k)\}_{k=0}^{\infty}$ generated by the \textsf{PNT} algorithm eventually converges to $(\bx^{*},\lambda^{*})$, i.e. the solution of \cref{discrepancy} and the corresponding Lagrange multiplier.
\end{corollary}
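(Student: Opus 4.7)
The plan is to deduce convergence of the full sequence from the two pillars already established: (i) Theorem \ref{thm:conv}, which says $h(\bx_k,\lambda_k)\to 0$ (or hits $0$ at finite $k$), and (ii) Corollary \ref{coro:F}, which identifies $(\bx^*,\lambda^*)$ as the \emph{unique} zero of $h$ on $\mathbb{R}^n\times\mathbb{R}^{+}$. The bridge between these two facts is the standard subsequence argument, provided we know the iterates stay in a compact set and that $h$ is continuous; both ingredients are already available, so the proof should be short.

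First I would invoke Lemma \ref{lem:point_bnd} to conclude that $\{(\bx_k,\lambda_k)\}_{k\geq 0}$ lies in a bounded subset of $\mathbb{R}^{n}\times\mathbb{R}^{+}$, hence is precompact by Bolzano--Weierstrass. Next, I would pick any convergent subsequence $(\bx_{k_j},\lambda_{k_j})\to(\widehat{\bx},\widehat{\lambda})$ and use continuity of $h$ together with Theorem \ref{thm:conv} to deduce $h(\widehat{\bx},\widehat{\lambda})=\lim_{j\to\infty}h(\bx_{k_j},\lambda_{k_j})=0$. Since Corollary \ref{coro:F} identifies $(\bx^*,\lambda^*)$ as the unique pair in $\mathbb{R}^{n}\times\mathbb{R}^{+}$ where $F$ (and therefore $h$) vanishes, the limit is forced to equal $(\bx^*,\lambda^*)$. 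One small care point: the limit $\widehat{\lambda}$ must lie in $\mathbb{R}^{+}=[0,\infty)$, which is automatic since $\lambda_k>0$ for every $k$ by construction of the step-length (the line search in Algorithm \ref{alg:PNTM} enforces $\lambda_k>0$), so the limit satisfies $\widehat{\lambda}\geq 0$ and Corollary \ref{coro:F} applies directly.

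Finally, to upgrade from subsequential convergence to convergence of the whole sequence, I would run the classical argument by contradiction: if $(\bx_k,\lambda_k)\not\to(\bx^*,\lambda^*)$, there would exist $\varepsilon_0>0$ and a subsequence $(\bx_{k_j},\lambda_{k_j})$ with $\|(\bx_{k_j},\lambda_{k_j})-(\bx^*,\lambda^*)\|\geq\varepsilon_0$ for all $j$; this subsequence is still bounded, so it has a further convergent sub-subsequence, whose limit by the previous paragraph must equal $(\bx^*,\lambda^*)$, contradicting the distance bound. This yields $\lim_{k\to\infty}(\bx_k,\lambda_k)=(\bx^*,\lambda^*)$.

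I do not expect any real obstacle here: all the heavy lifting (descent property, boundedness, step-length bounded away from zero, nonsingularity of the projected Jacobian) has already been done to prove Theorem \ref{thm:conv}, and uniqueness of the minimizer of $h$ is provided by Theorem \ref{thm:lagr} via Corollary \ref{coro:F}. The only mild subtlety worth stating explicitly is that the limit lies in the correct half-space $\mathbb{R}^{+}$ so that the uniqueness statement of Corollary \ref{coro:F} can be applied; otherwise the argument is a textbook ``bounded sequence with unique cluster point converges'' reduction.
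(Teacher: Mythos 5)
Your proposal is correct and follows essentially the same route as the paper: boundedness of the iterates (Lemma \ref{lem:point_bnd}), continuity of $h$ together with $h(\bx_k,\lambda_k)\to 0$ from \Cref{thm:conv}, uniqueness of the zero of $h$ from \Cref{coro:F}, and then the standard ``every convergent subsequence has the same limit'' contradiction argument to upgrade to convergence of the whole sequence. Your explicit remark that $\widehat{\lambda}\geq 0$ (so the uniqueness on $\mathbb{R}^n\times\mathbb{R}^{+}$ applies) is a small point the paper leaves implicit, but it is not a different argument.
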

\begin{proof}
	Using the same notations as the proof of \Cref{thm:conv}, we obtain that $(\widehat{\bx},\widehat{\lambda})=(\bx^{*},\lambda^{*})$, since $h(\bx,\lambda)$ has the unique zero point $(\bx^{*},\lambda^{*})$. Therefore, the subsequence $\{(\bx_{k_j},\lambda_{k_j})\}_{j=1}^{\infty}$ defined in the proof of \Cref{thm:conv} converges to $(\bx^{*},\lambda^{*})$. Now we need to prove the whole sequence $\{(\bx_{k},\lambda_{k})\}_{k=1}^{\infty}$ converges to $(\bx^{*},\lambda^{*})$. Assume that there is a subsequence $\{(\bx_{l_j},\lambda_{l_j})\}_{j=1}^{\infty}$ that does not converge to $(\bx^{*},\lambda^{*})$. We can select a subsequence from $\{(\bx_{l_j},\lambda_{l_j})\}_{j=1}^{\infty}$ that converges to a point $(\bar{\bx},\bar{\lambda})\neq (\bx^{*},\lambda^{*})$. Since $h(\bx_{l_j},\lambda_{l_j})$ is nonincreasing with respect to $j$, using the same procedure as the proof of \Cref{thm:conv}, we can obtain again that $h(\bar{\bx},\bar{\lambda})=0$, leading to $(\bar{\bx},\bar{\lambda})= (\bx^{*},\lambda^{*})$, a contradiction. Therefore, any subsequence of $\{(\bx_{k},\lambda_k)\}_{k=0}^{\infty}$ converges to $(\bx^{*},\lambda^{*})$, thereby $\{(\bx_{k},\lambda_k)\}_{k=0}^{\infty}$ converges to $(\bx^{*},\lambda^{*})$.
\end{proof}

\section{Experimental results}\label{sec5}
We test the \textsf{PNT} method and compare it with the standard \textsf{Newton} method, which refers to the method in \cite{landi2008lagrange} but \cref{Newton_direc} is solved using direct matrix inversions. These two methods use the same initialization and backtracking line search strategy. The setting of hyperparameters follows \Cref{alg:PNTM}, and we set $\tau=1.001$ and $\lambda_0=0.1$ in all the experiments. We also implement the generalized hybrid iterative method proposed in \cite{Chung2017} (denoted by \textsf{genHyb}), which is also based on \textsf{gen-GKB}. The \textsf{genHyb} iteratively computes approximations to $\mu_{opt}$ and $\bx_{opt}=\bx(\mu_{opt})$, where $\mu_{opt}$ is the optimal Tikhonov regularization parameter, that is $\mu_{opt}=\min_{\mu>0}\|\bx({\mu})-\bx_{\text{true}}\|_{2}$; the $k$-th approximate Lagrangian multiplier is $\lambda_{k}=1/\mu_{k}$. All the experiments are performed on MATLAB R2023b. The codes are available at \url{https://github.com/Machealb/InverProb_IterSolver}. 

All the inverse problems in the experiments are ill-posed and satisfy \Cref{assump1}. We use three types of ill-posed inverse problems to test the proposed method. The characteristics of these problems are summarized in \Cref{tab1}.

\begin{table}[htp]
	\centering
	\caption{Properties of the inverse problems in the experiments.}
	\scalebox{1.1}{
		\begin{tabular}{*{5}{c}}
			\toprule[0.6pt]
			Problem 	& $m\times n$     & Ill-posedness  & Description \\
			\midrule
			{\sf heat}  &$2000\times 2000$& moderate   & inverse heat equation  \\
			{\sf shaw} &$3000\times 3000$   & severe   & 1D image restoration  \\
			{\sf PRblurshake} & $128^2\times 128^2$  & mild  & 2D image deblurring  \\
			{\sf PRblurspeckle}  & $128^2\times 128^2$  & mild  & 2D image deblurring  \\
			{\sf PRspherical}  & $23168\times 128^2$  & mild  & computed tomography  \\
			\bottomrule[0.6pt]
	\end{tabular}}
	\label{tab1}
\end{table}

\subsection{Small-scale problems}
We choose two small-scale 1D inverse problems from \cite{Hansen2007}. The first problem is {\sf heat}, an inverse heat equation described by the Volterra integral equation of the first kind on $[0,1]$.
The second problem is {\sf shaw}, a one-dimensional image restoration model described by the Fredholm integral equation of the first kind on $[-\pi/2,\pi/2]$.
We use the code in \cite{Hansen2007} to discretize the two problems to generate $\bA$, $\bx_{\text{true}}$ and $\bb_{\text{true}}=\bA\bx_{\text{true}}$, where $m=n=2000$ and $m=n=3000$ for {\sf heat} and {\sf shaw}, respectively. We set the noisy observation vector $\bb$ as $\bb=\bb_{\text{true}}+\bepsilon$, where $\bepsilon$ is a Gaussian noise. For {\sf heat}, we set $\bepsilon$ as a white noise (i.e. $\bM$ is a scalar matrix) with noise level $\varepsilon:=\|\bepsilon\|_{2}/\|\bb_{\text{true}}\|_{2}=5\times 10^{-2}$; for {\sf shaw}, we set $\bepsilon$ as a uncorrelated non-white noise (i.e. $\bM$ is a diagonal matrix) with noise level $\varepsilon=10^{-2}$. The true solutions and noisy observed data for these two problems are shown in \Cref{fig1}.

\begin{figure}[htbp]
	\centering
	\subfloat 
	{\label{fig:1a}\includegraphics[width=0.26\textwidth]{./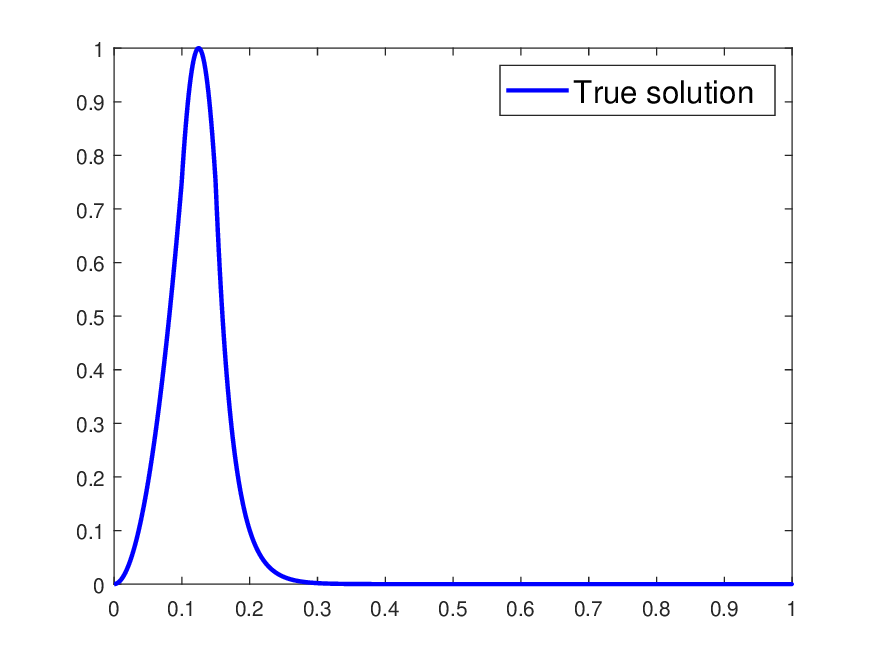}}\hspace{-3mm}
	\subfloat{\label{fig:1b}\includegraphics[width=0.26\textwidth]{./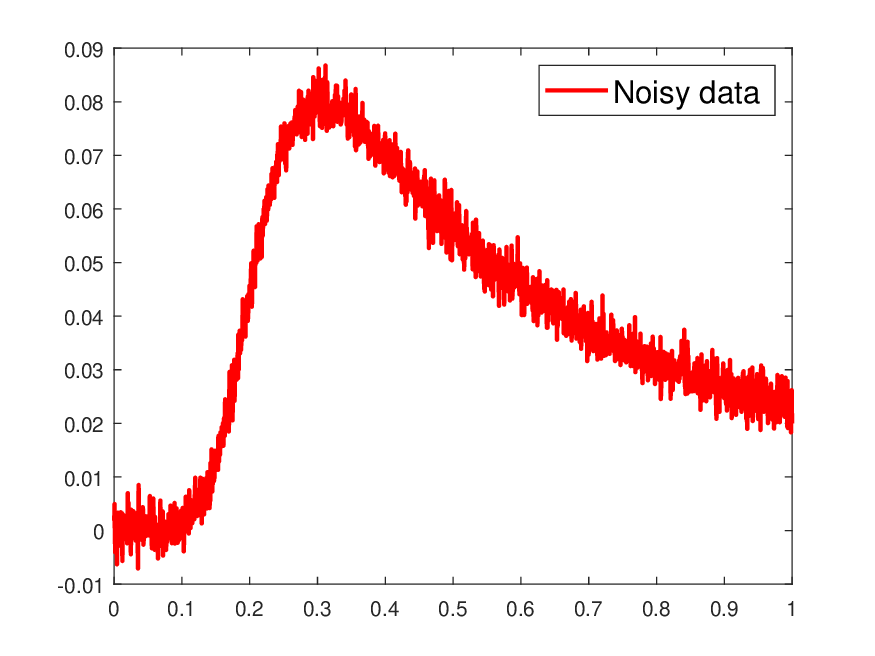}}\hspace{-3mm}
	\subfloat 
	{\label{fig:1c}\includegraphics[width=0.26\textwidth]{./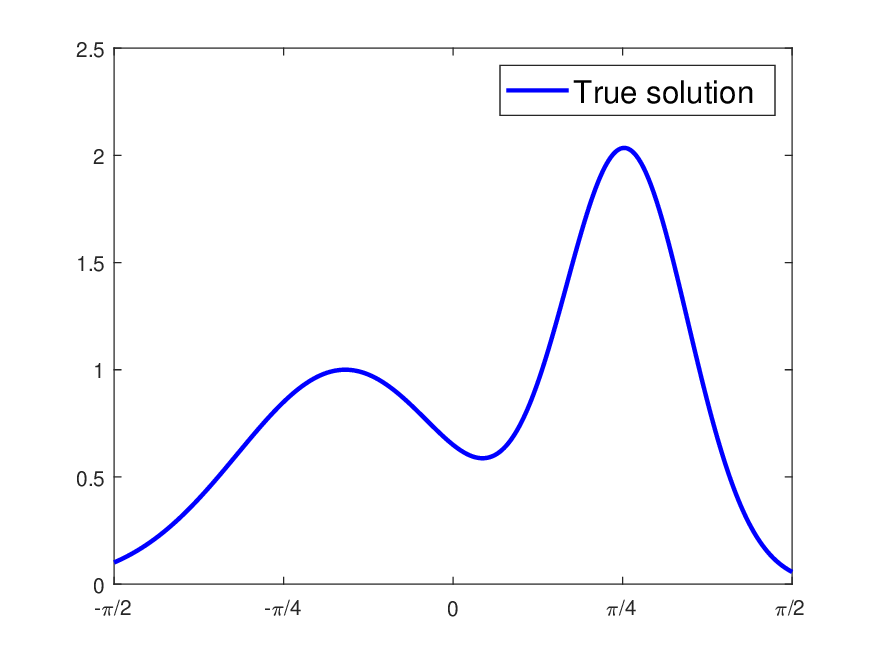}}\hspace{-3mm}
	\subfloat{\label{fig:1d}\includegraphics[width=0.26\textwidth]{./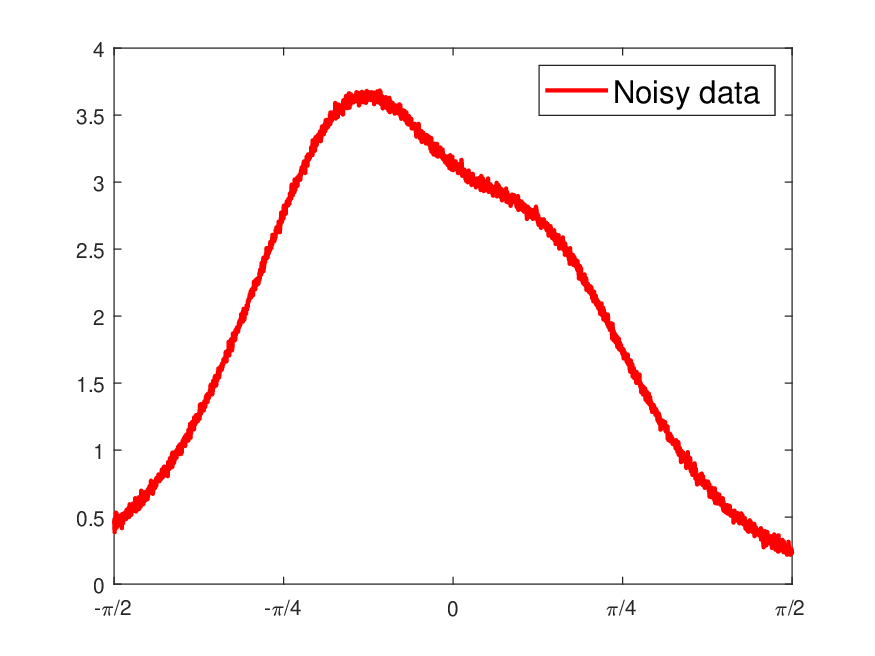}}
	\vspace{-8mm}
	\caption{True solution and noisy observed data. The first two: {\sf heat}. The last two: {\sf shaw}.}
	\label{fig1}
\end{figure}

For {\sf heat}, we assume a Gaussian prior $\bx\sim\mathcal{N}(\boldsymbol{0}, \mu^{-1}\bN)$ with $\bN$ coming from the Gaussian kernel $\kappa_{G}$, i.e. the $ij$ element of $\bN$ is
$
	[\bN]_{ij} = K_{G}(r_{ij}), \  K_{G}(r):=\exp\left(-r^2/(2l^2)\right) , 
$
where $r_{ij}=\|\bp_i-\bp_j\|_{2}$ and $\{\bp_{i}\}_{i=1}^{n}$ are discretized points of the domain of the true solution; the parameter $l$ is set as $l=0.1$. For {\sf shaw}, we construct $\bN$ using the exponential kernel
$
	K_{exp}(r):= \exp\left(-(r/l)^\nu\right),
$
where the parameters $l$ and $\nu$ are set as $l=0.1$ and $\nu=1$. We set $\tau=1.001$ for both the two problems. We factorize $\bM^{-1}$ and $\bN^{-1}$ to form \eqref{gen_regu} and solve it directly to find $\mu_{opt}$ and $\bx_{opt}$; the corresponding Lagrangian multiplier is $\lambda_{opt}=1/\mu_{opt}$. We also compute the $\mu$ of \cref{DP0} and the corresponding regularized solution, which is denoted by $\mu_{DP}$ and $\bx_{DP}$, respectively. Therefore, the solution to \cref{discrepancy,lagr} is $(\bx^{*},\lambda^{*})=(\bx_{DP},1/\mu_{DP})$. We use the optimal Tikhonov solution and the DP solution as the baseline for the subsequent tests.

For these two small-scale problems, we also implement the projected Newton method in \cite{cornelis2020projected1} based on the transformation \cref{gen_regu} as a comparison. This means that we solve
\[ \min_{\bar{\bx} \in \mathbb{R}^{n}}\{\|(\bL_{M}\bA\bL_{N})\bar{\bx}-\bL_{M}\bb\|_{2}^{2} + \mu\|\bar{\bx}\|_{2}^2\},\]
using the method in \cite{cornelis2020projected1} and then compute the regularized solution $\bx_{k}=\bL_{N}^{-1}\bar{\bx}_{k}$. This Cholesky factorization based method is abbreviated as \textsf{Ch-PNT}.

\begin{figure}[htbp]
	\centering
	\subfloat 
	{\label{fig:2a}\includegraphics[width=0.33\textwidth]{./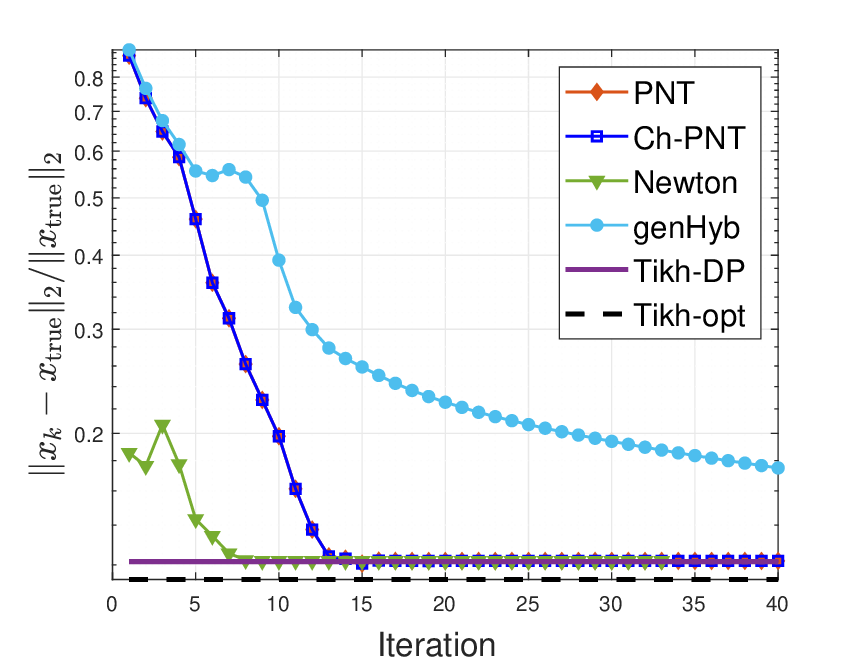}}\hspace{-4.mm}
	\subfloat
	{\label{fig:2b}\includegraphics[width=0.35\textwidth]{./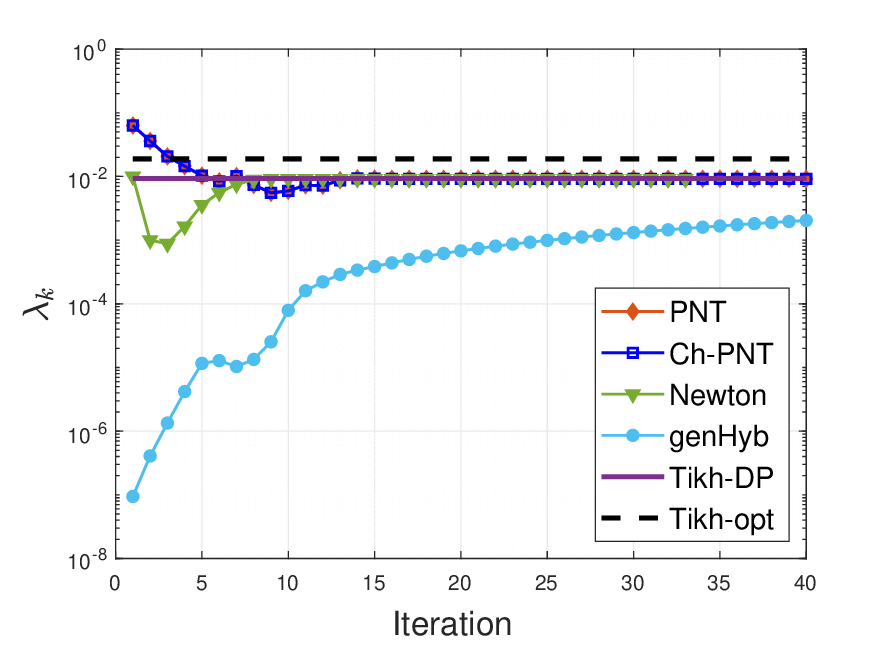}}\hspace{-4.mm}
	\subfloat
	{\label{fig:2c}\includegraphics[width=0.35\textwidth]{./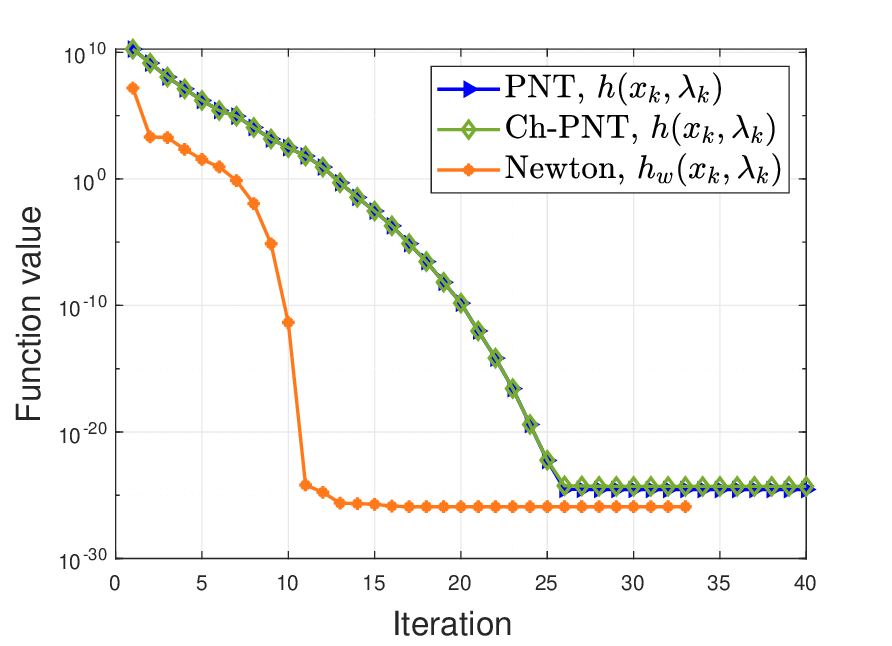}}
	\vspace{-4mm}
	\subfloat 
	{\label{fig:2d}\includegraphics[width=0.34\textwidth]{./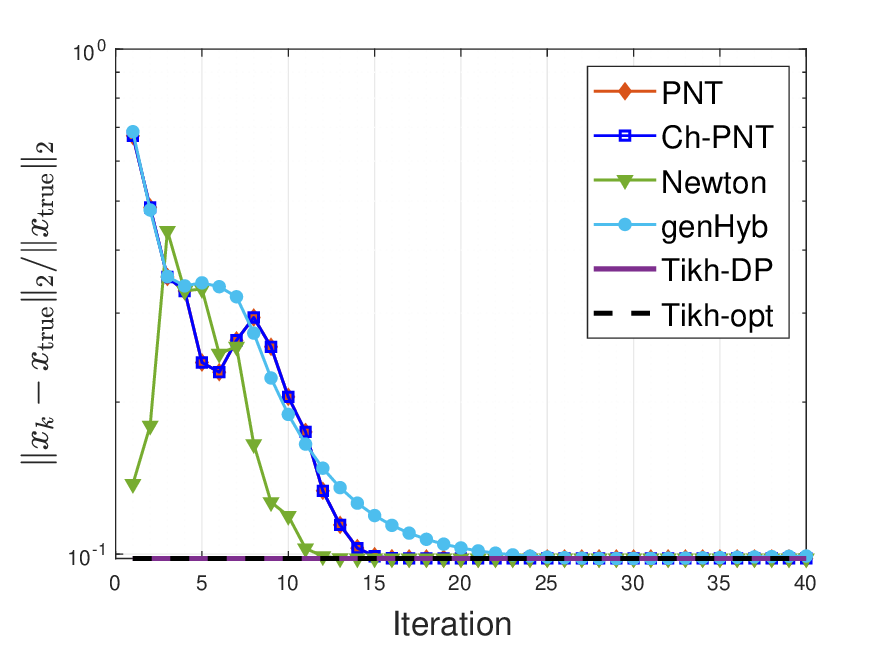}}\hspace{-4.mm}
	\subfloat
	{\label{fig:2e}\includegraphics[width=0.34\textwidth]{./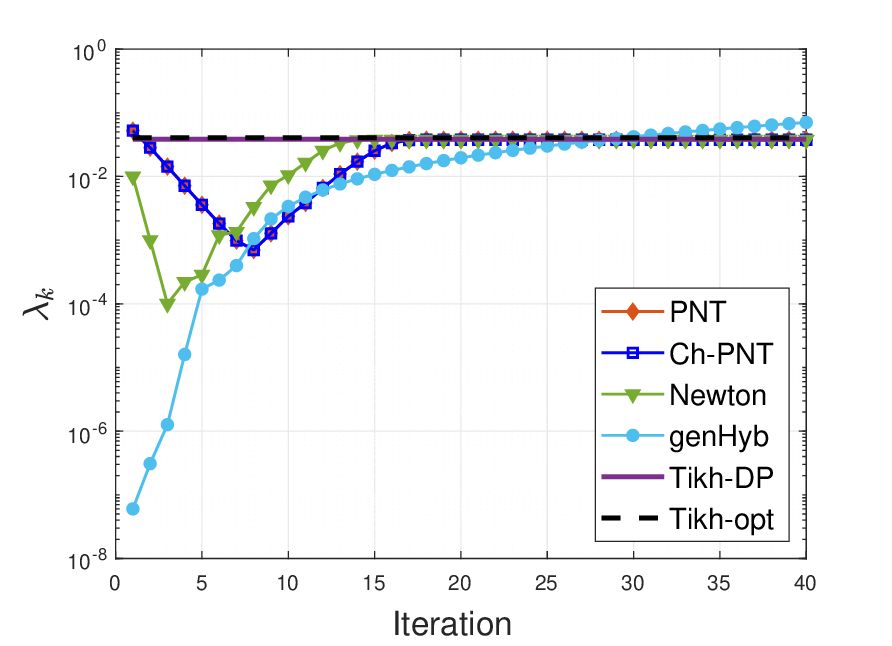}}\hspace{-4.mm}
	\subfloat
	{\label{fig:2f}\includegraphics[width=0.35\textwidth]{./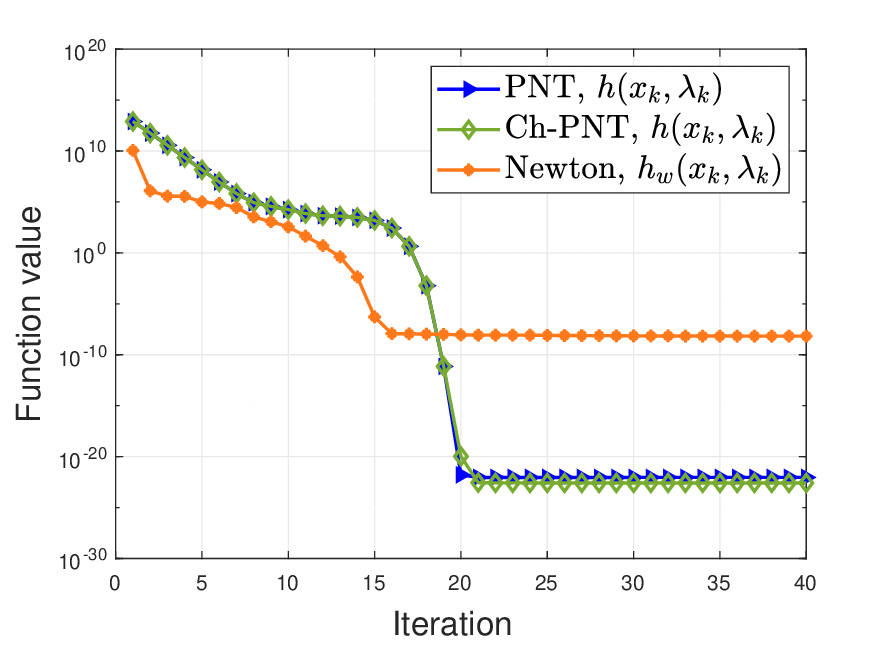}}
	\vspace{-3mm}
	\caption{Relative errors of iterative solutions, convergence of $\lambda_k$, and convergence of merit functions. Top: {\sf heat}. Bottom: {\sf shaw}.}
	\label{fig2}
\end{figure}

We compare the convergence behaviors of \textsf{PNT}, \textsf{Ch-PNT}, \textsf{Newton} and \textsf{genHyb} methods by plotting the relative error curve of $\bx_k$ with respect to $\bx_{\text{true}}$ and the convergence curves of $\lambda_k$ and merit functions. The solutions $(\bx_{DP},1/\mu_{DP})$ and $(\bx_{opt},1/\mu_{opt})$ are used as baselines. From \Cref{fig2} we find that both \textsf{PNT} and \textsf{Newton} methods converge very fast to $\bx_{DP}$ and $\lambda_{DP}:=1/\mu_{DP}$ with very few iterations, and \textsf{PNT} converges only slightly slower than \textsf{Newton}. 
We also find that the convergence behaviors of PNT and Ch-PNT are almost identical. This is not surprising, as both methods utilize the same subspaces for projecting the large-scale system and employ the same hyperparameters and update procedures. For {\sf heat}, the error of the DP solution is slightly higher than the optimal Tikhonov solution, because DP slightly under-estimates $\lambda$. The merit functions of both \textsf{PNT} and \textsf{Newton} decrease monotonically, and $h({\bx_k,\lambda_k})$ of \textsf{PNT} eventually decreases to an extremely small value for the two problems. We remark that we set $w=1$ for $h_{w}(\bx,\lambda)$ in all the tests. For \textsf{Newton} method for {\sf heat}, we stop the iterate at $k=34$ because the step-length $\gamma_k$ is too small. In comparison, \textsf{genHyb} converges much slower than the previous two methods, especially for {\sf heat}.

\begin{figure}[htbp]
	\centering
	\subfloat 
	{\label{fig:3a}\includegraphics[width=0.325\textwidth]{./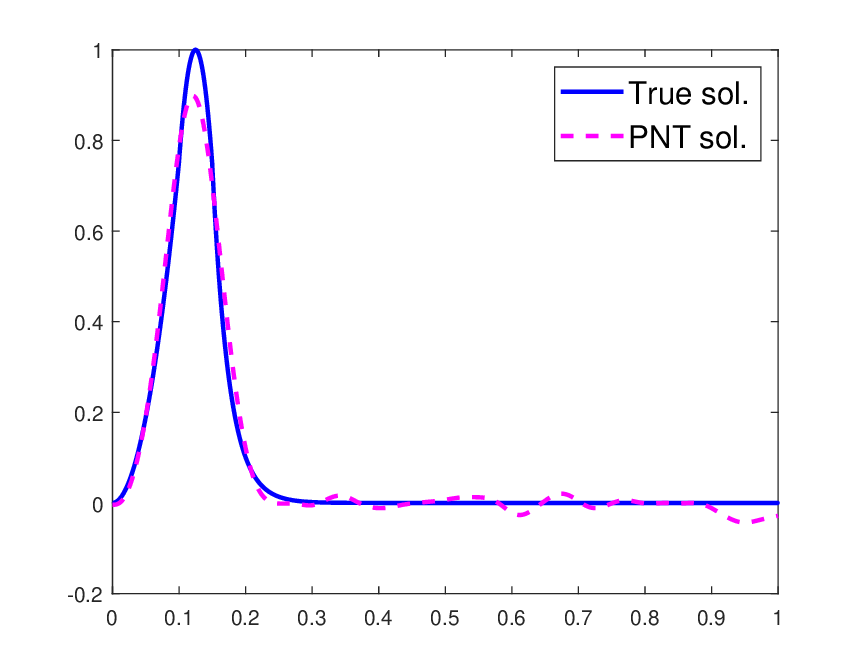}}\hspace{-3.5mm}
	\subfloat
	{\label{fig:3b}\includegraphics[width=0.34\textwidth]{./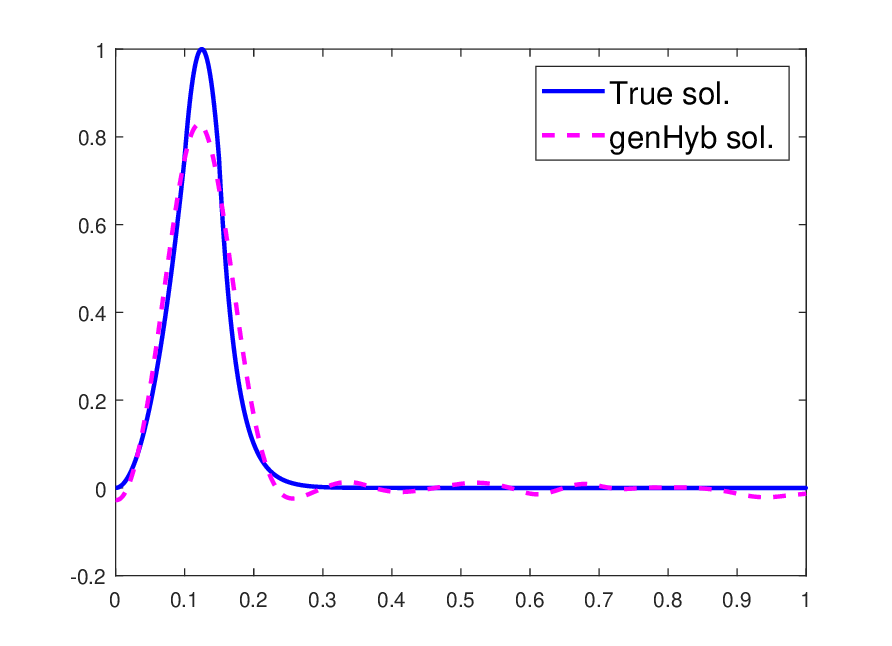}}\hspace{-3.5mm}
	\subfloat
	{\label{fig:3c}\includegraphics[width=0.34\textwidth]{./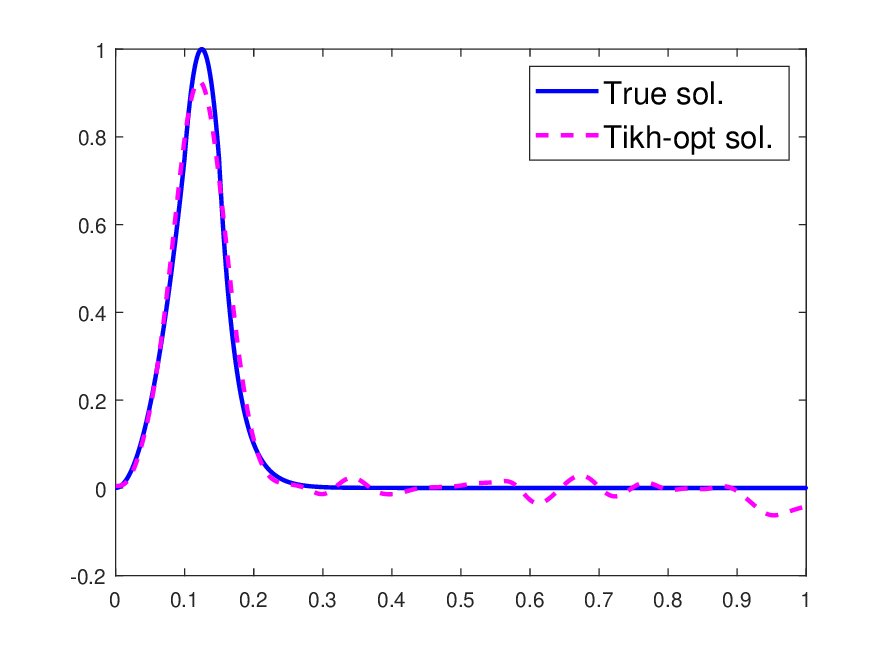}}
	\vspace{-5mm}
	\subfloat 
	{\label{fig:3d}\includegraphics[width=0.33\textwidth]{./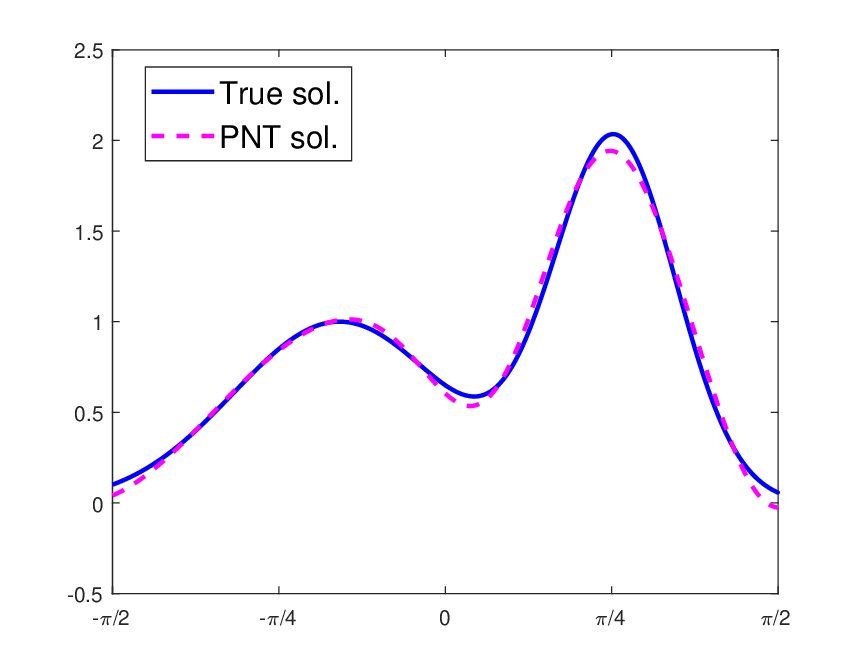}}\hspace{-3.5mm}
	\subfloat
	{\label{fig:3e}\includegraphics[width=0.33\textwidth]{./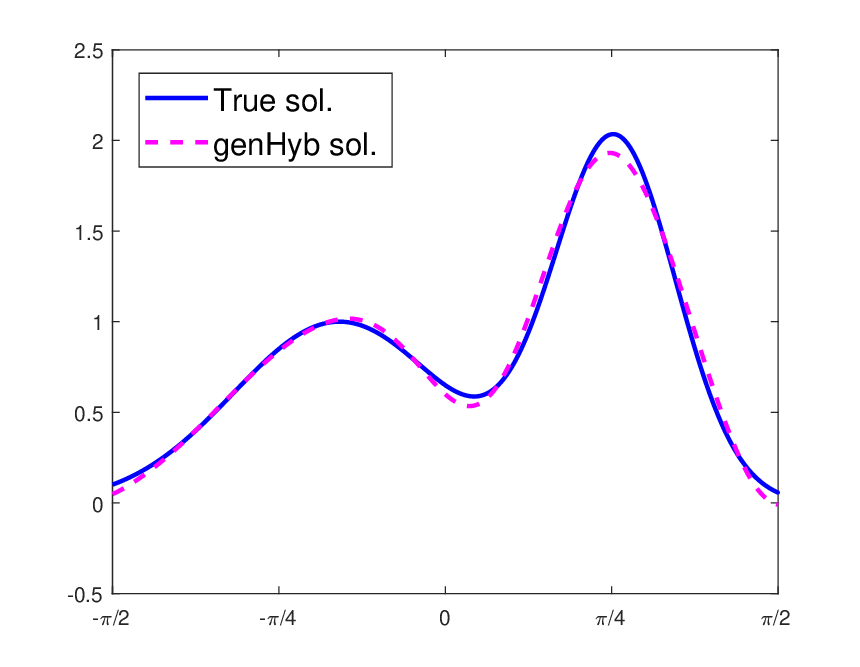}}\hspace{-3.5mm}
	\subfloat
	{\label{fig:3f}\includegraphics[width=0.34\textwidth]{./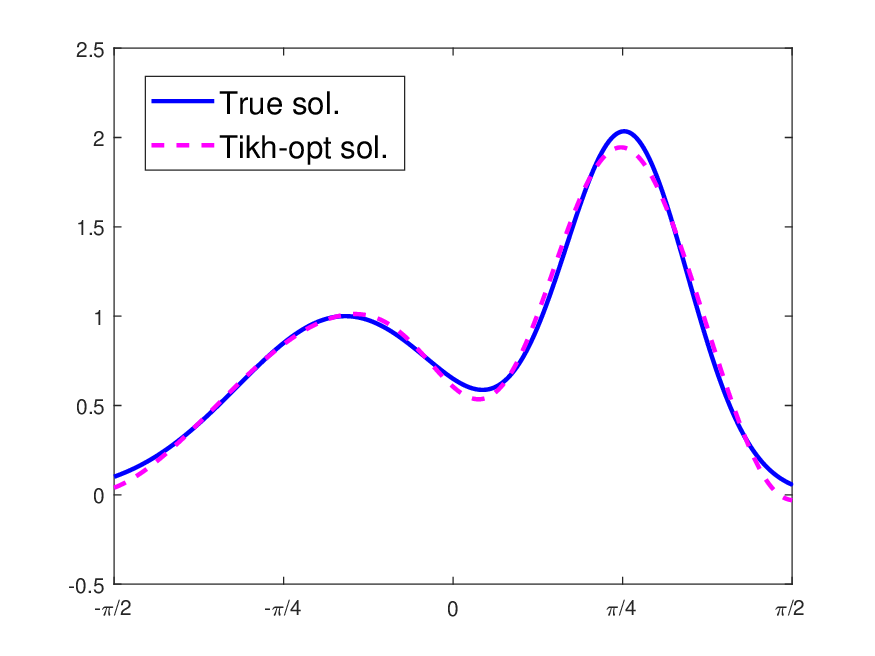}}
	\vspace{-3mm}
	\caption{Comparison of reconstructed solutions at the final iterations with the optimal Tikhonov regularized solution. Top: {\sf heat}. Bottom: {\sf shaw}.}
	\label{fig3}
\end{figure}

\Cref{fig3} plots the recovered solutions computed by \textsf{PNT} and \textsf{genHyb} methods at the final iterations; the solution by \textsf{Newton} is almost the same as that by \textsf{PNT}, thereby we omit it. We also plot the optimal Tikhonov regularized solution as a comparison, where the DP solution is very similar and omitted. Both \textsf{PNT} and \textsf{genHyb} can recover good regularized solutions, and \textsf{PNT} is slightly better for {\sf heat}.

\begin{figure}[htbp]
	\centering
	\subfloat 
	{\label{fig:35a}\includegraphics[width=0.4\textwidth]{./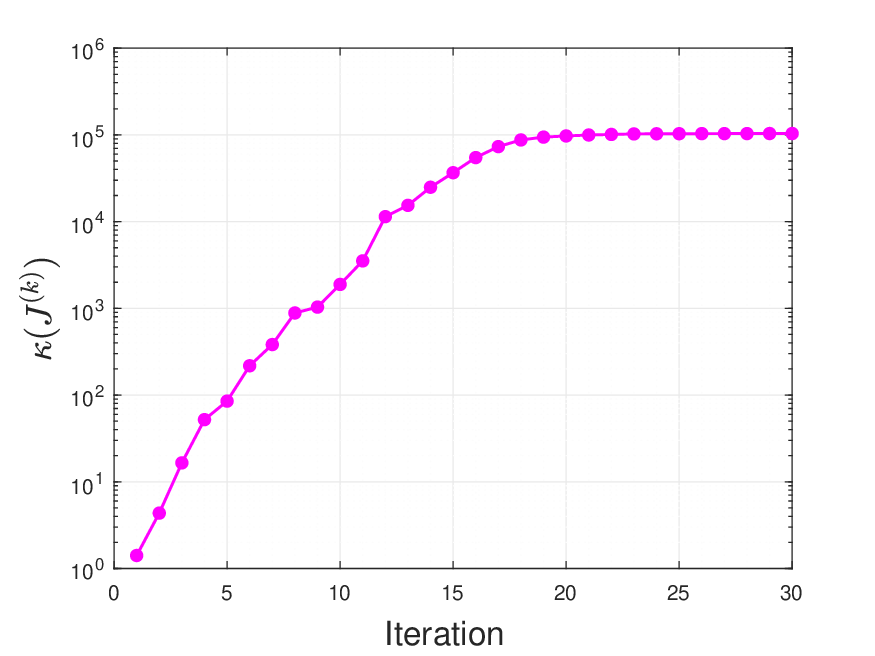}}\hspace{-1mm}
	\subfloat
	{\label{fig:35b}\includegraphics[width=0.42\textwidth]{./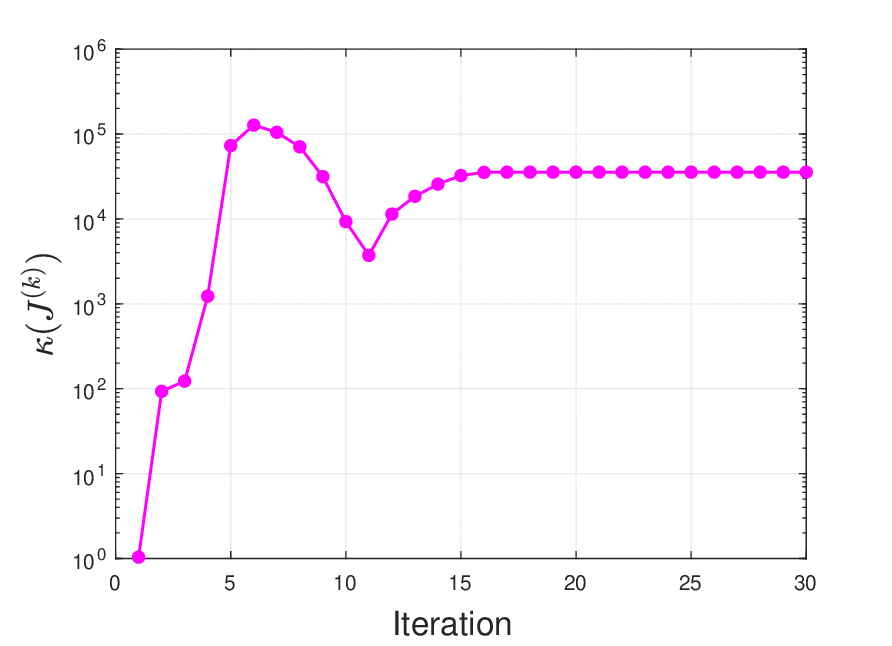}}\hspace{-1mm}
	\vspace{-3mm}
	\caption{Variation of the condition number of $J^{(k)}(\bar{\by}_{k-1},\lambda_{k-1})$ during the iteration of \textsf{PNT}. Left: {\sf heat}. Right: {\sf shaw}.}
	\label{fig35}
\end{figure}

To further demonstrate the performance of \textsf{PNT}, we present the variation of the condition number of $J^{(k)}(\bar{\by}_{k-1},\lambda_{k-1})$ during the iteration of \textsf{PNT} in \Cref{fig35}. This condition number is denoted by $\kappa(J^{(k)})$ in the two pictures. We observe that the condition number  does not increase significantly during the iteration. This ensures that the small-scale linear system \cref{proj_Newton1} can be solved directly via matrix inversions without any issues.

\begin{table}[htp]
	\centering
	\caption{Running time (measured in seconds) of \textsf{PNT}, \textsf{Ch-PNT} and \textsf{Newton} methods as the scale of the problems increasing from $n=1000$ to $n=5000$. Both the two methods stop at the first $k$ (in parentheses) such that $\left|\|\bA\bx_k-\bb\|_{\bM^{-1}}^2-\tau m\right|\leq 10^{-8}$. The ratio of the running time between \textsf{PNT} and \textsf{Ch-PNT} is denoted as \textbf{ratio-1}, while the ratio of the running time between \textsf{PNT} and \textsf{Newton} is denoted as \textbf{ratio-2}.}
	\scalebox{1.0}{
		\begin{tabular}{*{6}{c}}
			\toprule[0.6pt]
			$n$ 	& 1000   & 2000   & 3000  & 4000 & 5000 \\
			\midrule
			&  \multicolumn{4}{c}{{\sf heat}} &    \\
			\midrule
			\textsf{PNT}  &$0.021$ ($18$) & $0.114$ ($21$) & $0.164$ ($19$) & $0.347$ ($19$) & $0.492$ ($19$) \\
			\textsf{Ch-PNT} &$0.032$ ($18$) & $0.280$ ($21$)  & $0.375$ ($19$)  & $0.764$ ($19$) & $1.314$ ($19$) \\
			\textsf{Newton} &$0.249$ ($10$) & $2.568$ ($11$)  & $6.062$ ($10$)  & $13.914$ ($10$) & $26.127$ ($11$) \\
			\textbf{ratio-1}  & \textbf{1.5} & \textbf{2.5} & \textbf{2.3} & \textbf{2.2} & \textbf{2.7}  \\
			\textbf{ratio-2}  & \textbf{11.9} & \textbf{22.5} & \textbf{37.0} & \textbf{40.1} & \textbf{53.1}  \\
			\midrule
			&  \multicolumn{4}{c}{{\sf shaw}} &   \\
			\midrule
			\textsf{PNT}  & $0.014$ ($17$)   & $0.051$ ($16$)     & $0.158$ ($17$) & $0.293$ ($18$) & $0.479$ ($19$)  \\
			\textsf{Ch-PNT} & $0.051$ ($17$)   & $0.170$ ($16$)     & $0.438$ ($19$) & $0.873$ ($18$) & $1.819$ ($19$)  \\
			\textsf{Newton} &$0.455$ ($16$) & $3.675$ ($15$) & $8.904$ ($14$) & $26.835$ ($16$) & $52.768$ ($16$) \\
			\textbf{ratio-1}  & \textbf{3.6} & \textbf{3.3} & \textbf{2.8} & \textbf{3.0} & \textbf{3.8}  \\
			\textbf{ratio-2}  & \textbf{32.5} & \textbf{72.1} & \textbf{56.4} & \textbf{91.6} & \textbf{110.2}  \\
			\bottomrule[0.6pt]
	\end{tabular}}
	\label{tab5.2}
\end{table}

\begin{figure}[htbp]
	\centering
	\subfloat 
	{\label{fig:3.5a}\includegraphics[width=0.47\textwidth]{./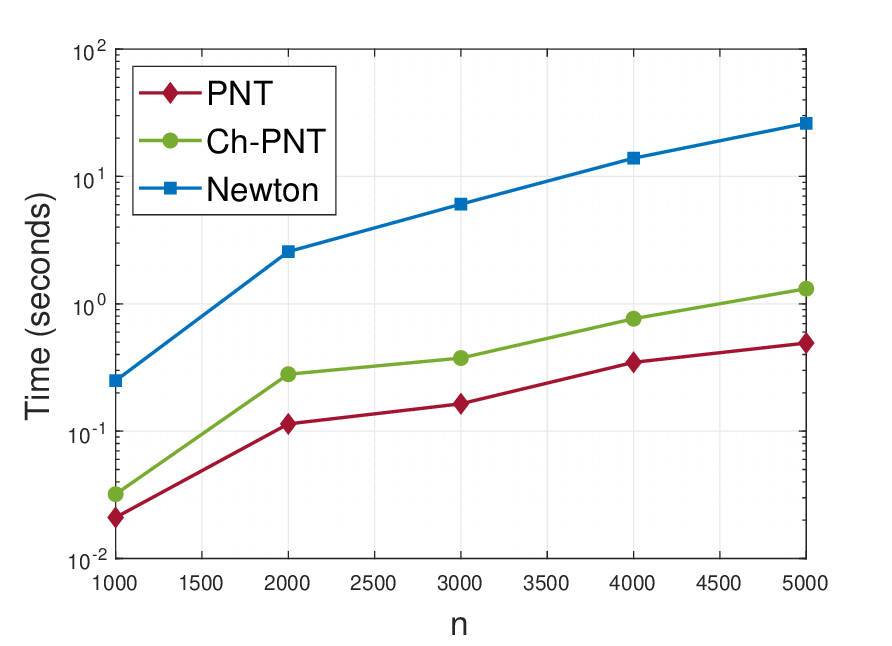}}\hspace{-1mm}
	\subfloat
	{\label{fig:3.5b}\includegraphics[width=0.45\textwidth]{./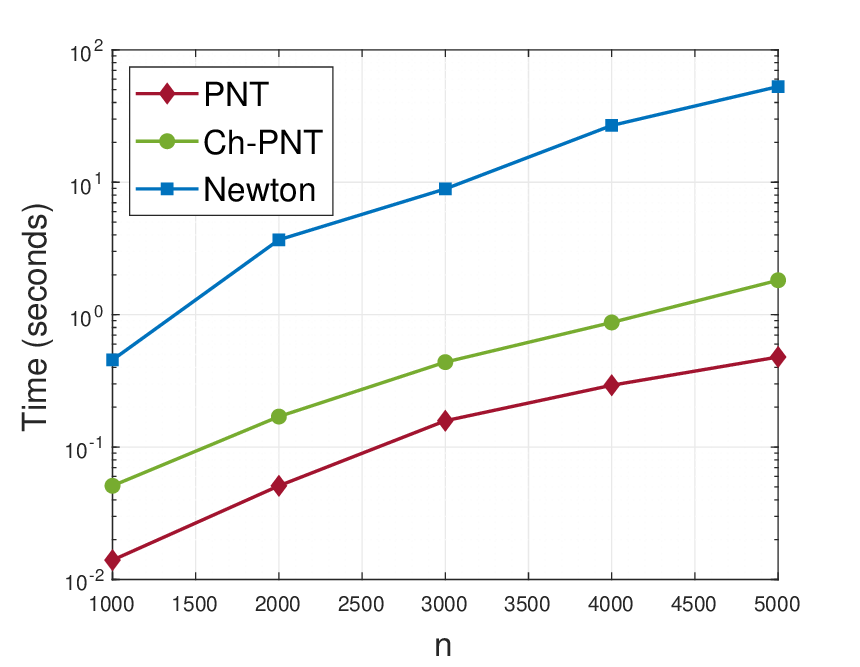}}\hspace{-1mm}	
	\vspace{-3mm}
	\caption{Comparison of scalability of \textsf{PNT}, \textsf{Ch-PNT} and \textsf{Newton} methods as the scale of the problems increasing from $n=1000$ to $n=5000$. Left: {\sf heat}. Right: {\sf shaw}.}
	\label{fig3.5}
\end{figure}

To show the advantage of the computational efficiency of \textsf{PNT} over \textsf{Ch-PNT} and \textsf{Newton}, we gradually increase the scale of the test problems and measure the running time of the three methods, where all of them stop at the first iteration such that $\left|\|\bA\bx_k-\bb\|_{\bM^{-1}}^2-\tau m\right|\leq 10^{-8}$. The time data are listed in \Cref{tab5.2}. We also compute the ratio of the running time, i.e. the value of \textsf{Ch-PNT}-time/\textsf{PNT}-time and \textsf{Newton}-time/\textsf{PNT}-time. For {\sf shaw}, we find that all three methods stop with similar iteration numbers, and the computational speed of \textsf{PNT} is much faster than \textsf{Newton}, with the speedup ratio varying from 41 to 157. For {\sf heat}, we find that \textsf{Newton} stops with only about half iteration numbers of \textsf{PNT}'s. However, the total running time of \textsf{PNT} is still much smaller than \textsf{Newton}'s, with the speedup ratio varying from 8 to 43. To compare the scalability of \textsf{PNT}, \textsf{Ch-PNT} and \textsf{Newton} more clearly, we use the data in \Cref{tab5.2} to plot the curve of time growth with respect to $n$. Clearly, \textsf{PNT} saves much more time compared to \textsf{Newton} while obtaining solutions with the same accuracy. Although the advantage of \textsf{PNT} over \textsf{Ch-PNT} is not significant for small-scale problems, \textsf{Ch-PNT} is not feasible for large-scale problems due to the prohibitive cost of Cholesky factorization.


\subsection{Large-scale problems}
We choose three 2D image deblurring and computed tomography inverse problems from \cite{Gazzola2019}. The first problem is {\sf PRblurshake}, which simulates a spatially invariant motion blur caused by the shaking of a camera. The second problem is {\sf PRblurspeckle}, which simulates a spatially invariant blur caused by atmospheric turbulence. The third problem is {\sf PRspherical} that models spherical means tomography. The true images and noisy observed data are shown in \Cref{fig4}, where all the images have $128\times 128$ pixels, and  $\bepsilon$ are uncorrelated non-white Gaussian noises with $\varepsilon=10^{-3}, \ 5\times 10^{-3}$ and $10^{-2}$, respectively. We have $m=n=128^2$ for the first two problems, and $m=23168,n=128^2$ for the third problem.

\begin{figure}[htbp]
	\centering
	\subfloat
	{\label{fig:4a}\includegraphics[width=0.3\textwidth]{./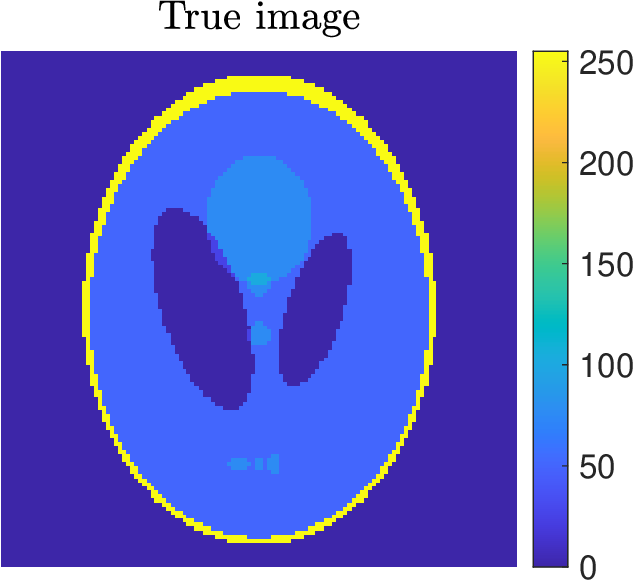}}\hspace{1.5mm}
	\subfloat
	{\label{fig:4b}\includegraphics[width=0.3\textwidth]{./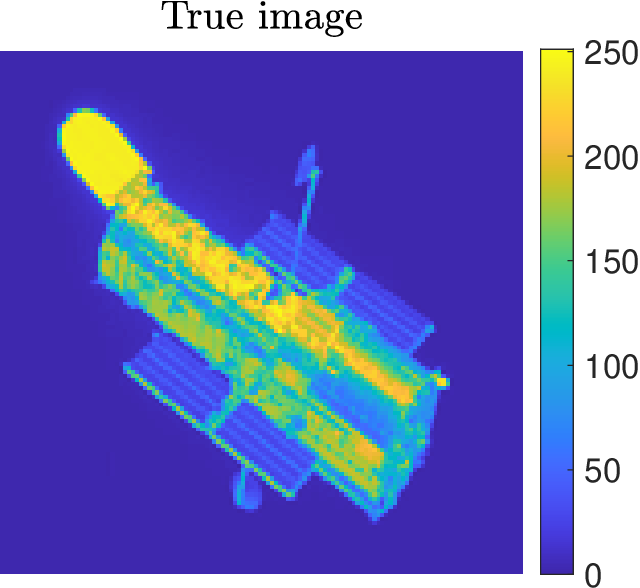}}\hspace{1.5mm}
	\subfloat
	{\label{fig:4c}\includegraphics[width=0.3\textwidth]{./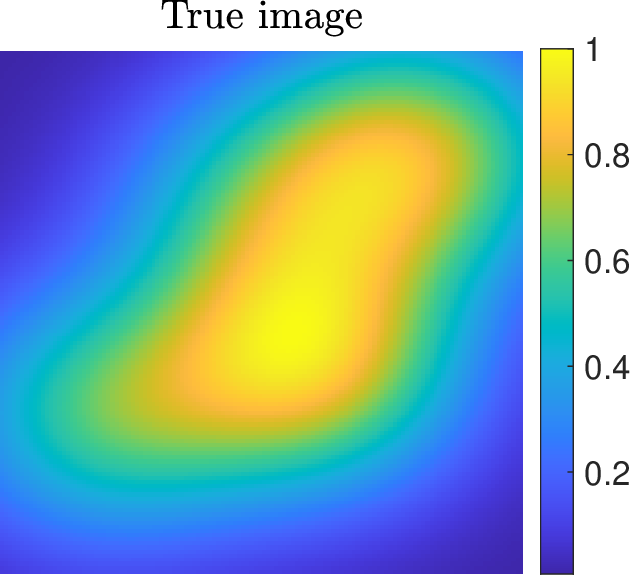}}
	\vspace{-2mm} 
	\subfloat
	{\label{fig:4d}\includegraphics[width=0.3\textwidth]{./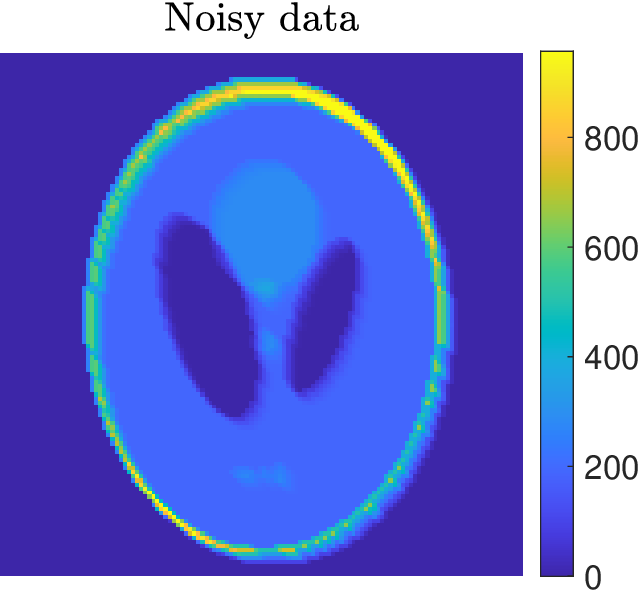}}\hspace{1mm}
	\subfloat
	{\label{fig:4e}\includegraphics[width=0.3\textwidth]{./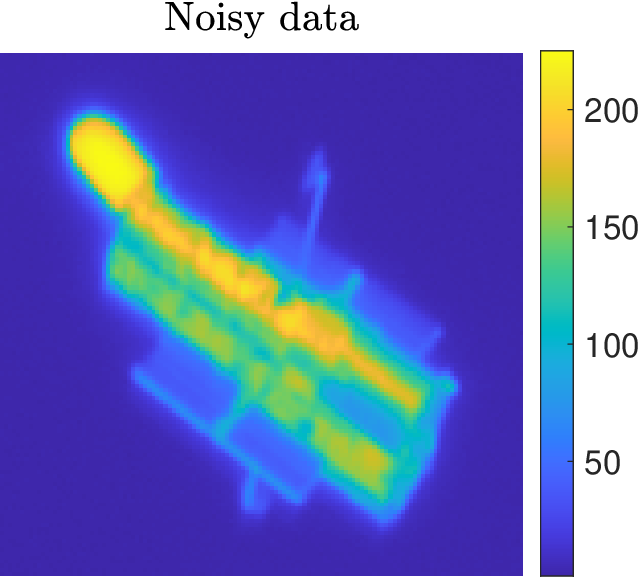}}\hspace{1.5mm}
	\subfloat
	{\label{fig:4f}\includegraphics[width=0.31\textwidth]{./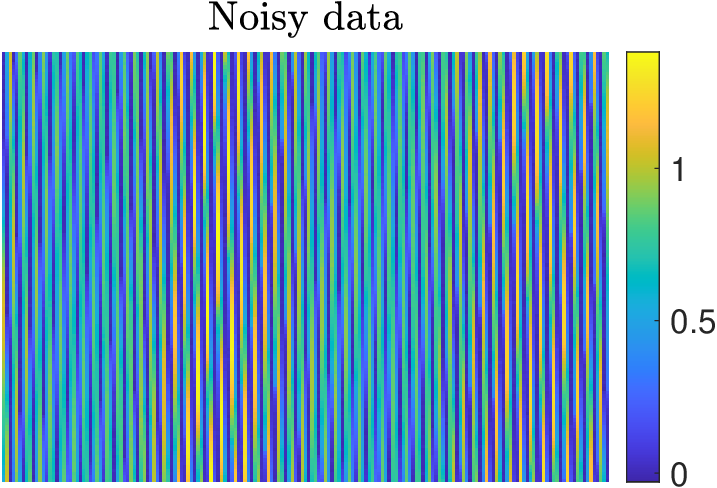}}
	\vspace{-2mm}
	\caption{True solution and noisy observed data for deblurring and tomography problems. From the leftmost column to the rightmost column are {\sf PRblurshake}, {\sf PRblurspeckle} and {\sf PRspherical}.}
	\label{fig4}
\end{figure}

For {\sf PRblurshake} and {\sf PRblurspeckle}, we construct $\bN$ using the Gaussian kernel with $l=10$ and $l=1$, respectively. For {\sf PRspherical}, we construct $\bN$ using the Mat\'{e}rn kernel
\[
	K_{M}(r):= \frac{2^{1-\nu}}{\Gamma(\nu)}\Bigg(\frac{\sqrt{2\nu}r}{l}\Bigg)^\nu B_\nu\Bigg(\frac{\sqrt{2\nu}r}{l}\Bigg),
\]
where $\Gamma(\cdot)$ is the gamma function, $B_{\nu}(\cdot)$ is the modified Bessel function of the second kind, and $l$ and $\nu$ are two positive parameters of the covariance; we set $l=100$ and $\nu=1.5$. 
For the three large-scale problems, it is almost impossible to get $(\mu_{opt}, \bx(\mu_{opt}))$ and $(\mu_{DP}, \bx(\mu_{DP}))$ by solving \cref{gen_regu}. The standard \textsf{Newton} method and the methods in \cite{cornelis2020projected1,cornelis2020projected2} can not be applied because these methods have to deal with $\bN^{-1}$. To test the performance of \textsf{PNT}, here we only compare it with \textsf{genHyb}. Additionally, we also implement \textsf{PNT-md} to demonstrate that it can save some computation compared to \textsf{PNT}.

\begin{figure}[htbp]
	\centering
	\subfloat
	{\label{fig:5a}\includegraphics[width=0.33\textwidth]{./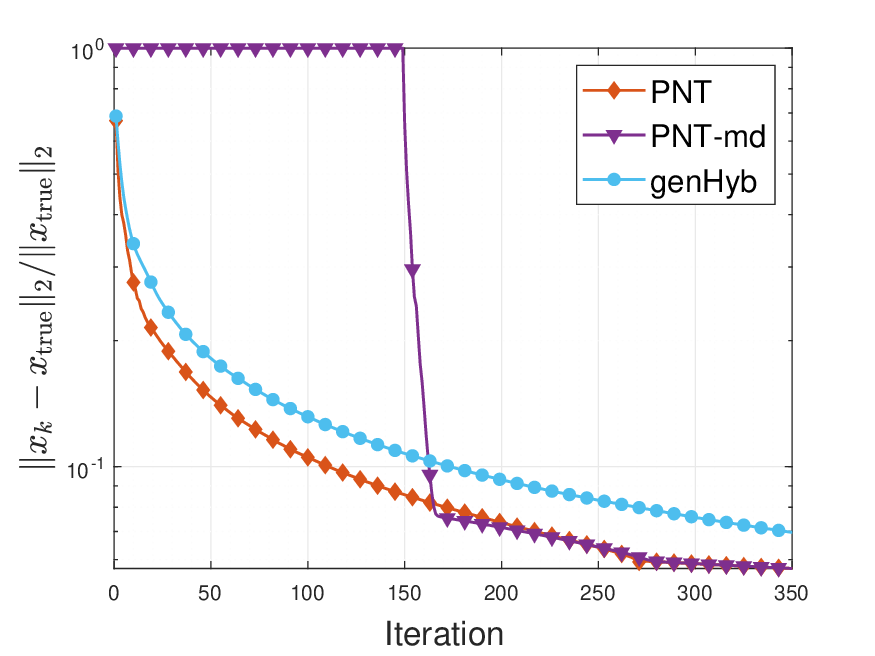}}\hspace{-2mm}
	\subfloat
	{\label{fig:5b}\includegraphics[width=0.33\textwidth]{./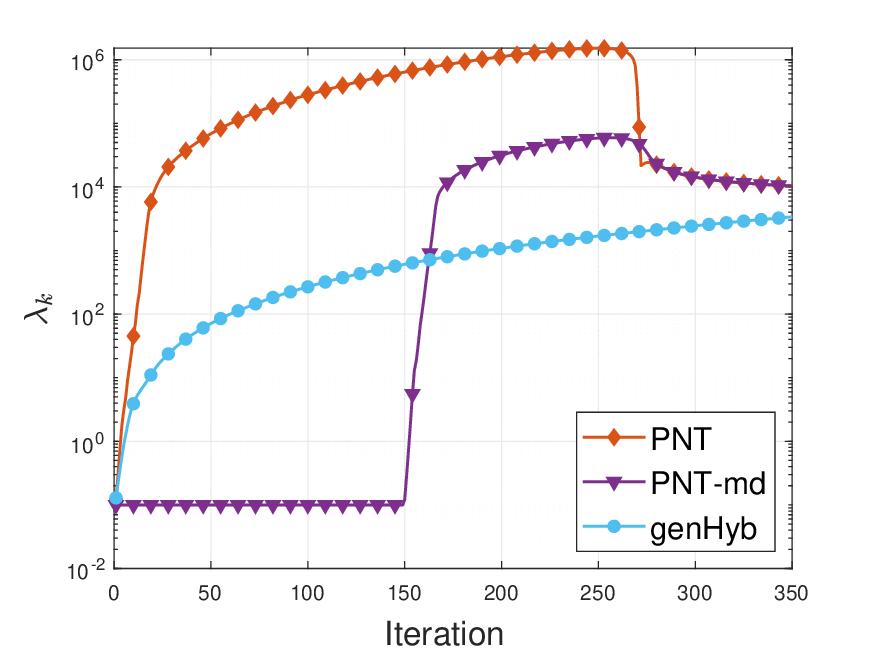}}\hspace{-2mm}
	\subfloat
	{\label{fig:5c}\includegraphics[width=0.33\textwidth]{./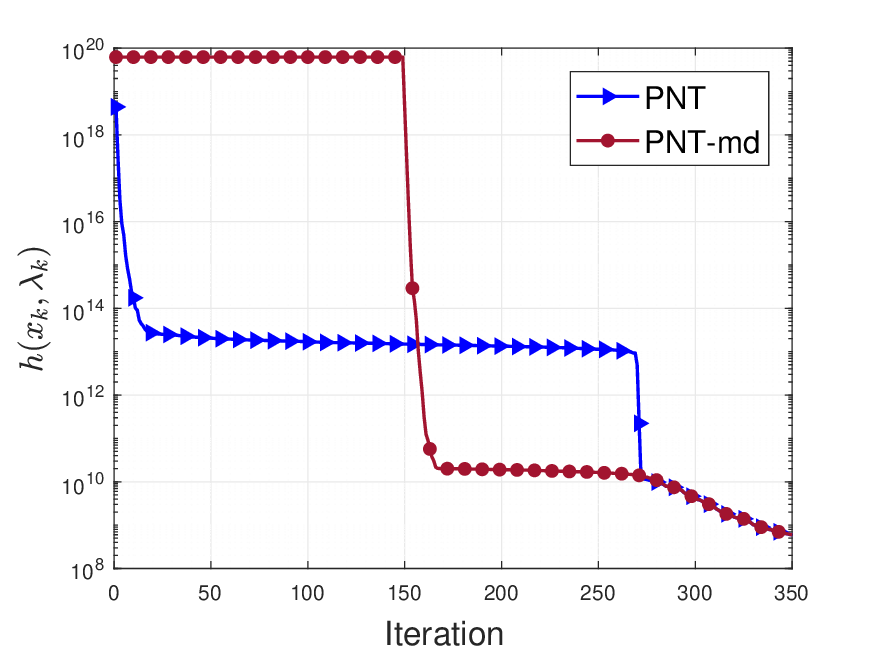}}
	\vspace{-4mm} 
	\subfloat
	{\label{fig:5d}\includegraphics[width=0.33\textwidth]{./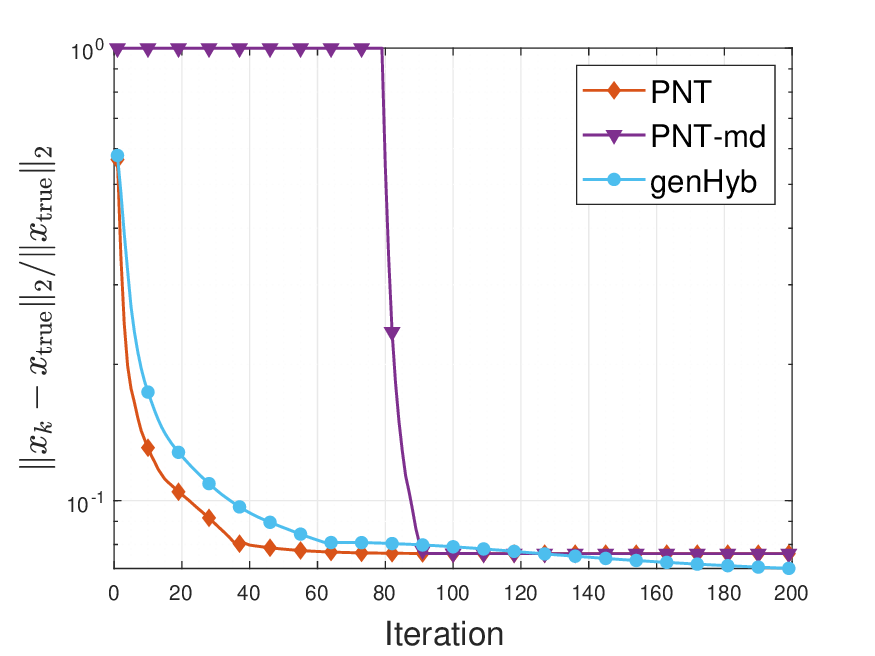}}\hspace{-2mm}
	\subfloat
	{\label{fig:5e}\includegraphics[width=0.33\textwidth]{./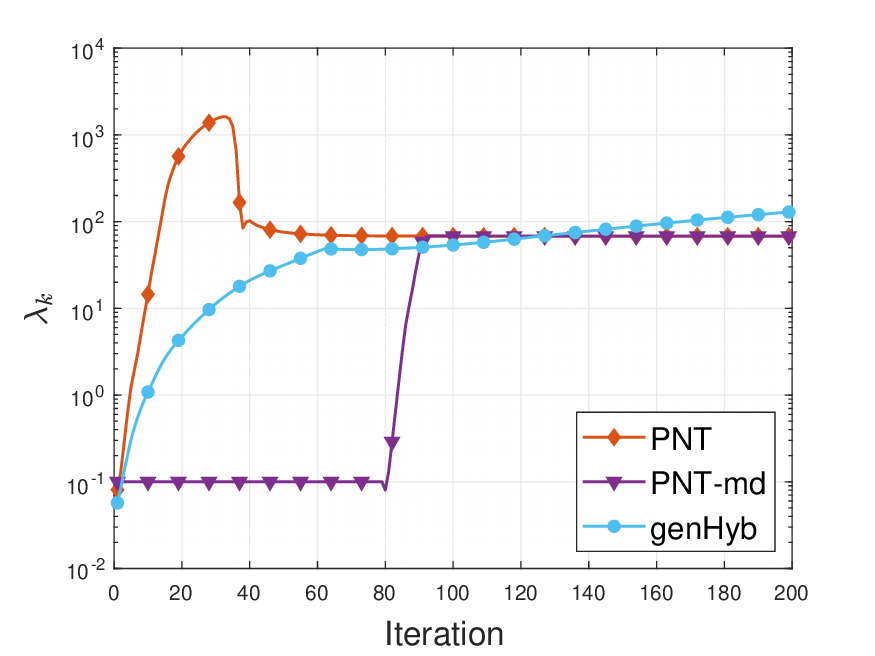}}\hspace{-2mm}
	\subfloat
	{\label{fig:5f}\includegraphics[width=0.33\textwidth]{./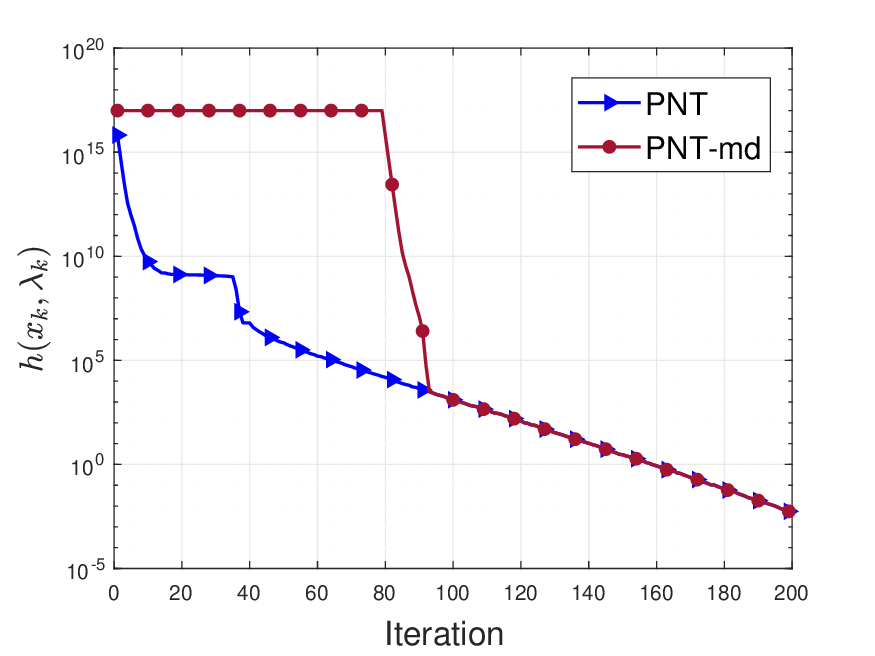}}
	\vspace{-4mm} 
	\subfloat
	{\label{fig:5g}\includegraphics[width=0.33\textwidth]{./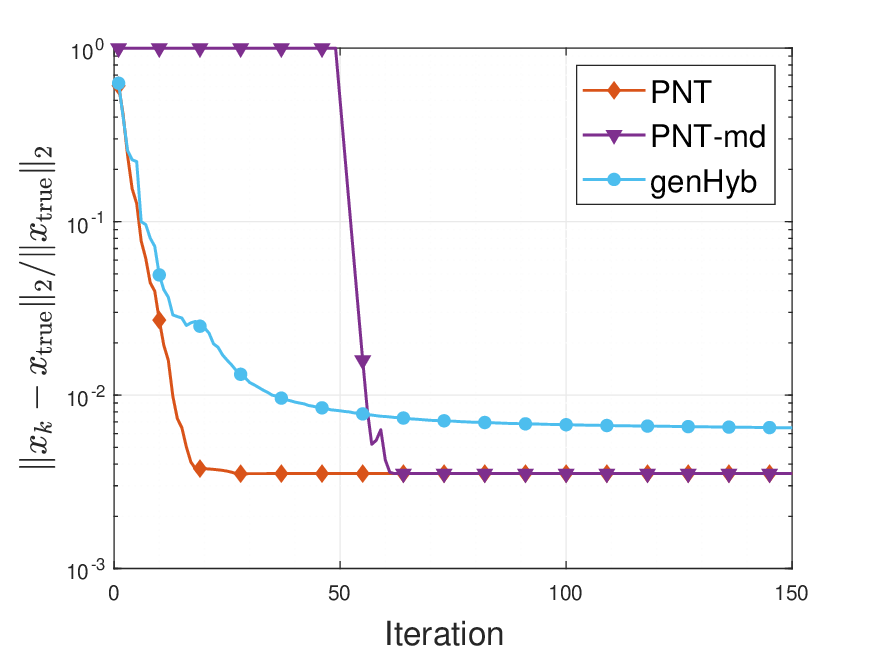}}\hspace{-2mm}
	\subfloat
	{\label{fig:5h}\includegraphics[width=0.33\textwidth]{./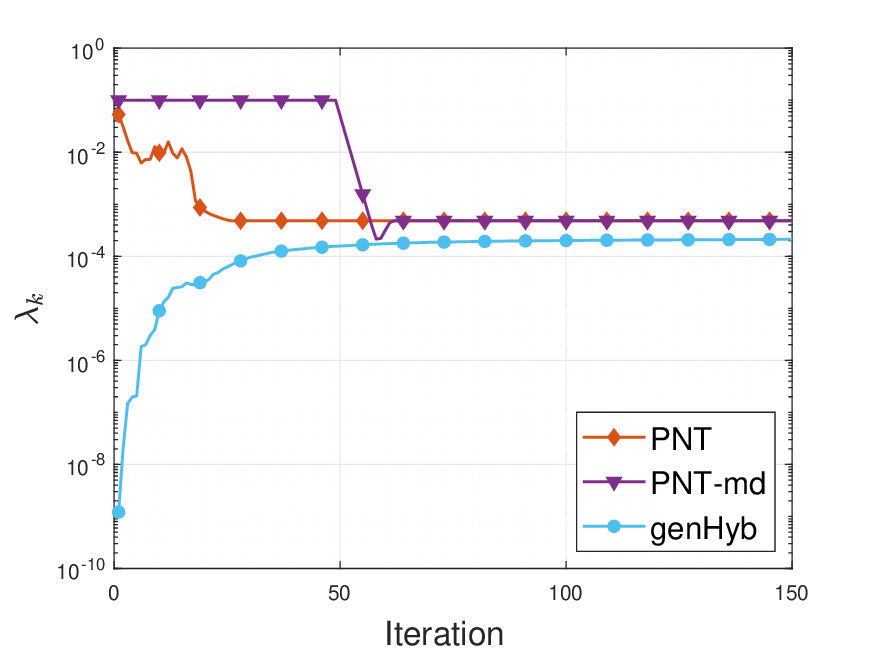}}\hspace{-2mm}
	\subfloat
	{\label{fig:5i}\includegraphics[width=0.33\textwidth]{./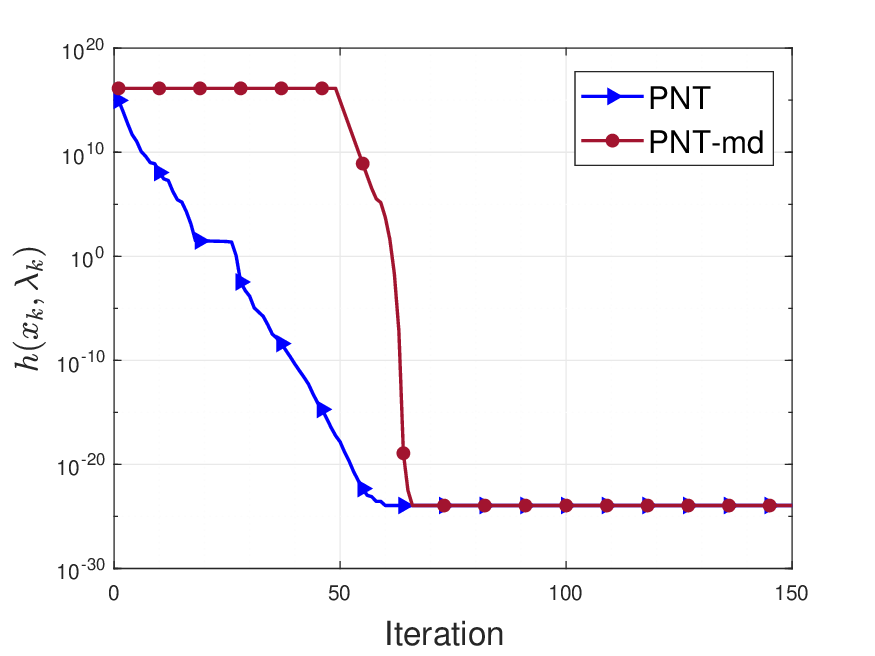}}
	\vspace{-3mm} 
	\caption{Relative errors of iterative solutions, convergence of $\lambda_k$, and convergence of merit functions. Top: {\sf PRblurshake}. Middle: {\sf PRblurspeckle}. Bottom: {\sf PRspherical}.}
	\label{fig5}
\end{figure}

\begin{figure}[htbp]
	\centering
	\subfloat
	{\label{fig:6a}\includegraphics[width=0.3\textwidth]{./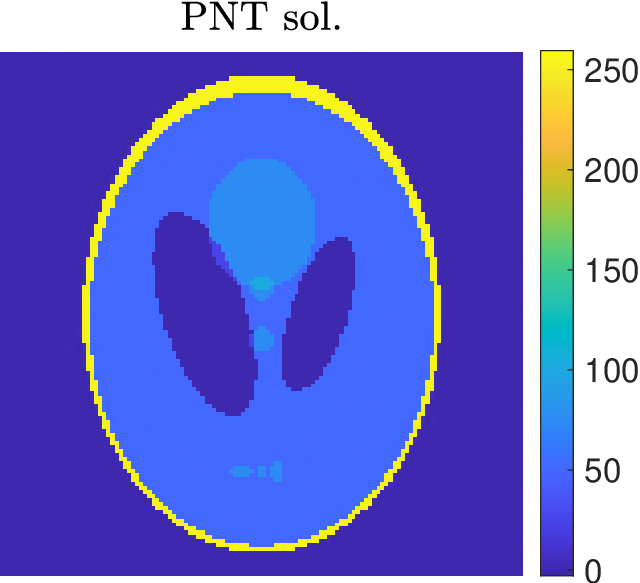}}\hspace{1.5mm}
	\subfloat
	{\label{fig:6b}\includegraphics[width=0.3\textwidth]{./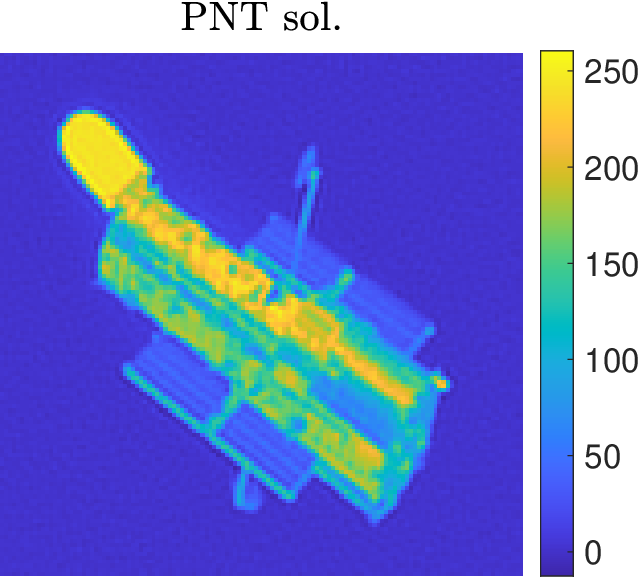}}\hspace{1.5mm}
	\subfloat
	{\label{fig:6c}\includegraphics[width=0.29\textwidth]{./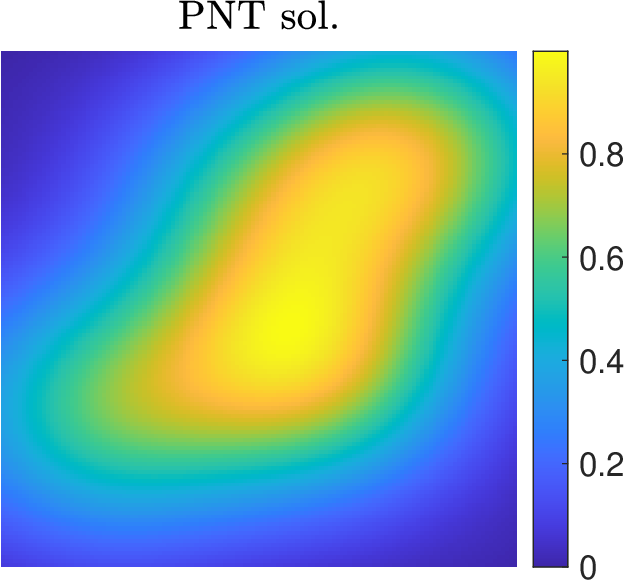}}
	\vspace{-2mm} 
	\subfloat
	{\label{fig:6d}\includegraphics[width=0.3\textwidth]{./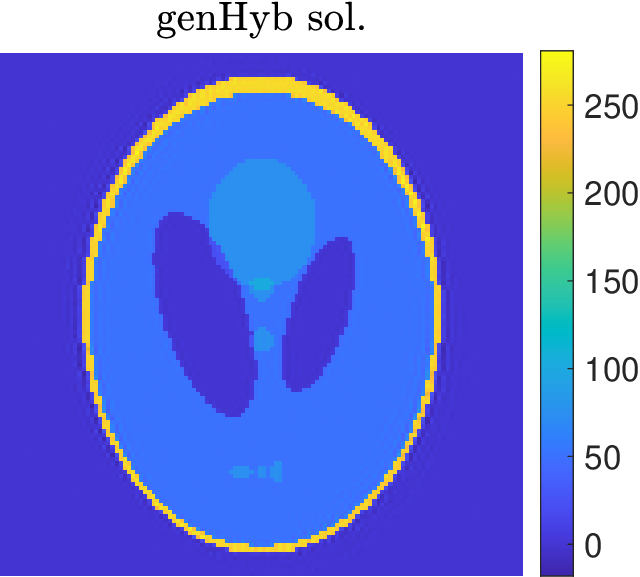}}\hspace{1.5mm}
	\subfloat
	{\label{fig:6e}\includegraphics[width=0.3\textwidth]{./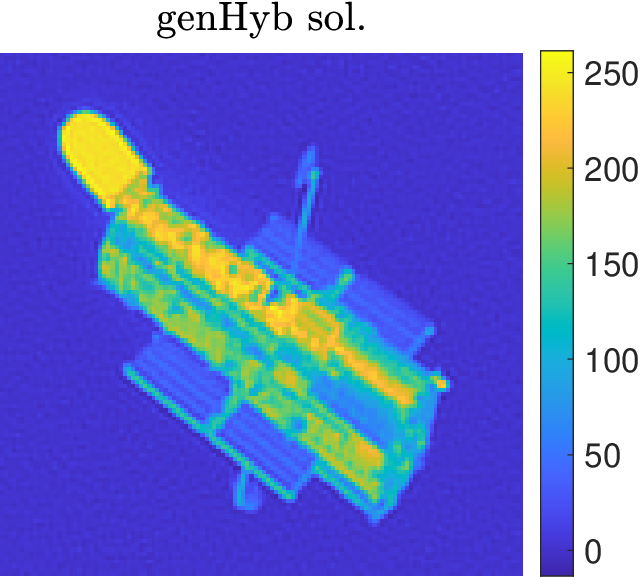}}\hspace{1.5mm}
	\subfloat
	{\label{fig:6f}\includegraphics[width=0.29\textwidth]{./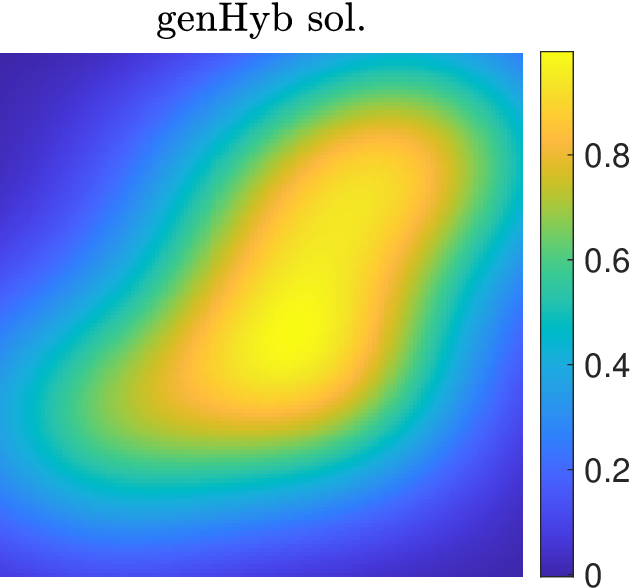}}
	\vspace{-2mm}
	\caption{Reconstructed solutions at the final iterations by \textsf{PNT} and \textsf{genHyb}. From the leftmost column to the rightmost column are {\sf PRblurshake}, {\sf PRblurspeckle} and {\sf PRspherical}.}
	\label{fig6}
\end{figure}

The relative error curves of the three methods, the convergence curves of $\lambda_k$ and $h(\bx_k,\lambda_k)$ are plotted in \Cref{fig5}. For \textsf{PNT-md}, we set $k_0=150,80,50$ for the three problems, respectively. It can be observed that \textsf{PNT} for the last two problems converges very fast: the variations of relative error and $\lambda_k$ become quickly stabilized after 50 to 150 iterations, although for the second problem $h(\bx_k,\lambda_k)$ are still decreasing significantly after 200 iterations. The \textsf{genHyb} method for {\sf PRblurshake} and {\sf PRspherical} converges slower, and it obtains two solutions with larger relative errors than that of \textsf{PNT}. This is because \textsf{genHyb} under-estimates $\lambda$ more than \textsf{PNT}. For all three problems, \textsf{PNT-md} converges very quickly from $k_0$, achieving solutions with the same accuracy as \textsf{PNT} while requiring nearly the same total number of iterations. The reconstructed images are shown in \Cref{fig6}, which reveals the effectiveness of \textsf{PNT} and \textsf{genHyb}. The variation of the condition number of $J^{(k)}(\bar{\by}_{k-1},\lambda_{k-1})$ is shown in \Cref{fig65}. The condition number does not grow very large during the iteration, allowing the small-scale linear system \cref{proj_Newton1} to be solved directly without issues.

\begin{figure}[htbp]
	\centering
	\subfloat 
	{\label{fig:65a}\includegraphics[width=0.33\textwidth]{./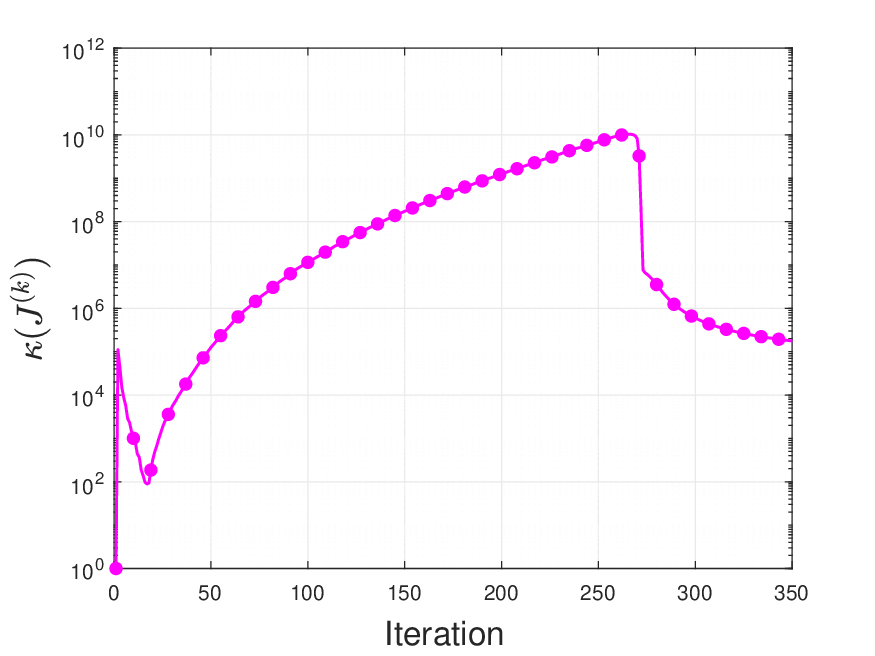}}\hspace{-3mm}
	\subfloat
	{\label{fig:65b}\includegraphics[width=0.33\textwidth]{./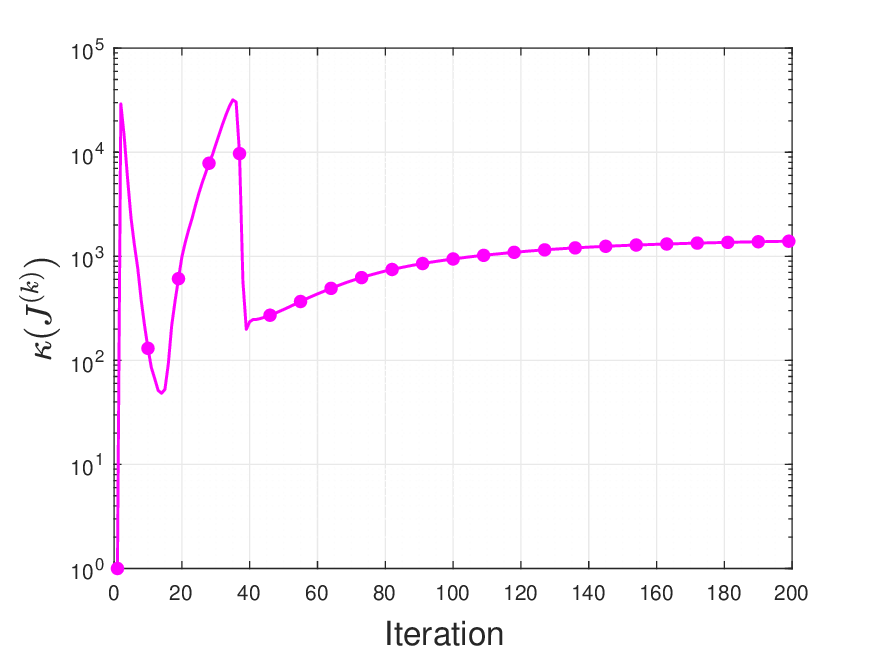}}\hspace{-3mm}
	\subfloat
	{\label{fig:65c}\includegraphics[width=0.34\textwidth]{./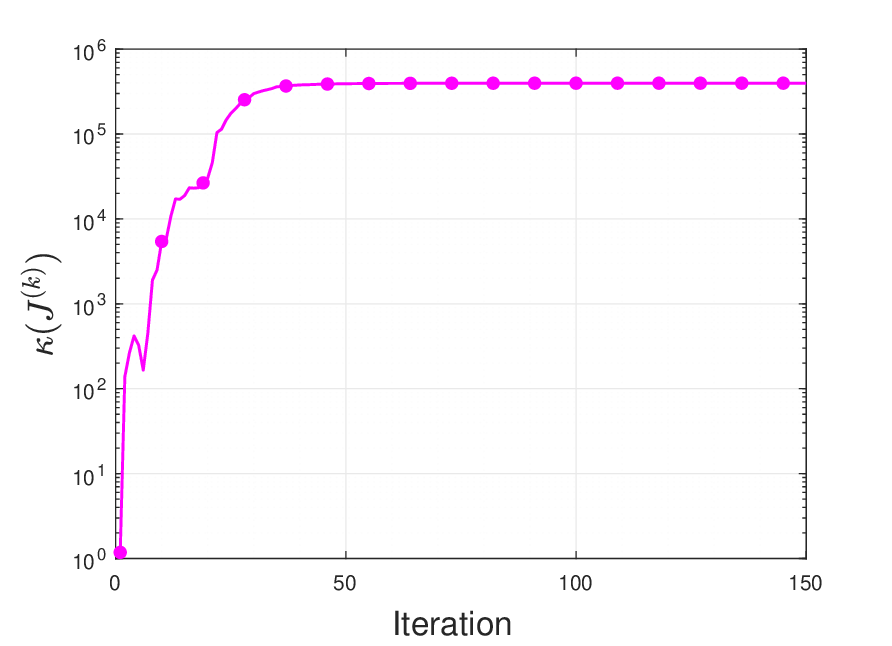}}
	\vspace{-3mm}
	\caption{Variation of the condition number of $J^{(k)}(\bar{\by}_{k-1},\lambda_{k-1})$ during the iteration of \textsf{PNT}. Left: {\sf PRblurshake}. Middle: {\sf PRblurspeckle} Right: {\sf PRspherical}.}
	\label{fig65}
\end{figure}

\begin{figure}[htbp]
	\centering
	\subfloat
	{\label{fig:7a}\includegraphics[width=0.33\textwidth]{./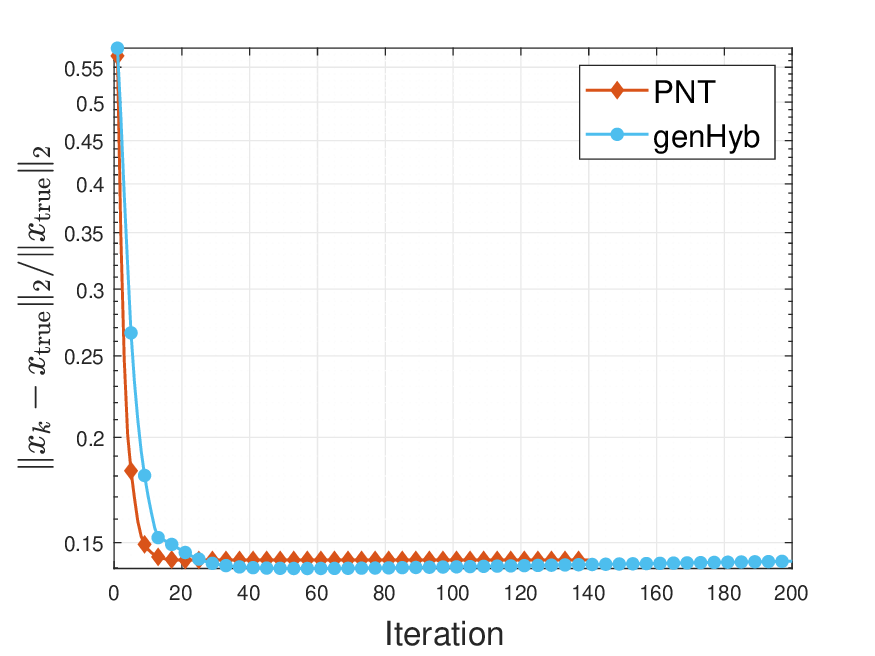}}\hspace{-4.5mm}
	\subfloat
	{\label{fig:7b}\includegraphics[width=0.33\textwidth]{./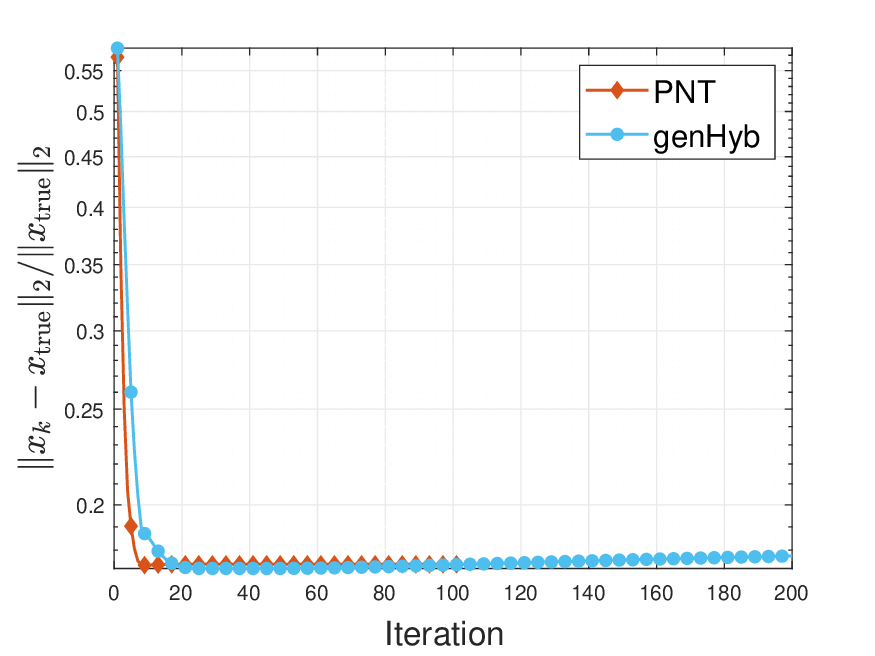}}\hspace{-4.5mm}
	\subfloat
	{\label{fig:7c}\includegraphics[width=0.33\textwidth]{./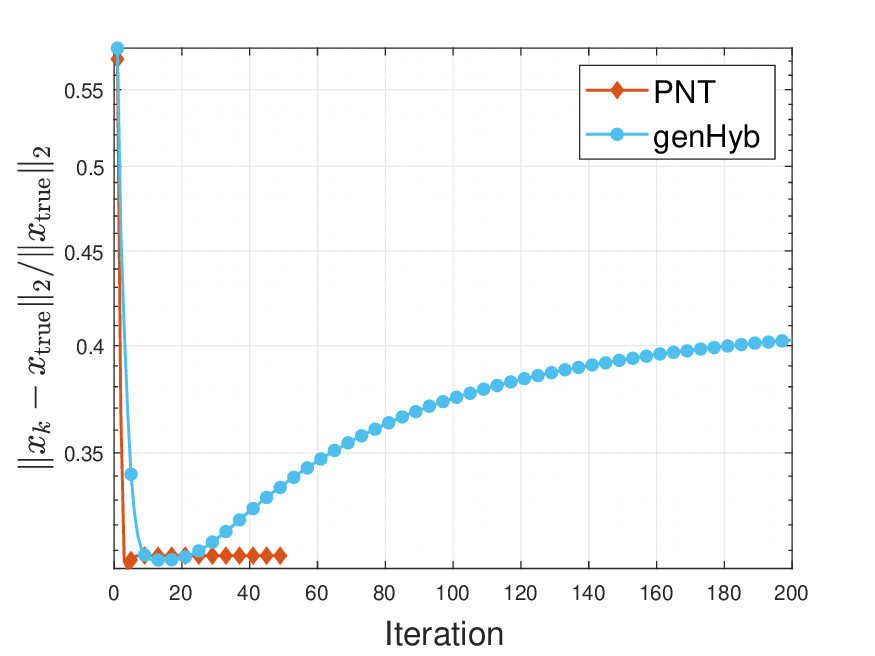}}
	\vspace{-4mm} 
	\subfloat
	{\label{fig:7d}\includegraphics[width=0.33\textwidth]{./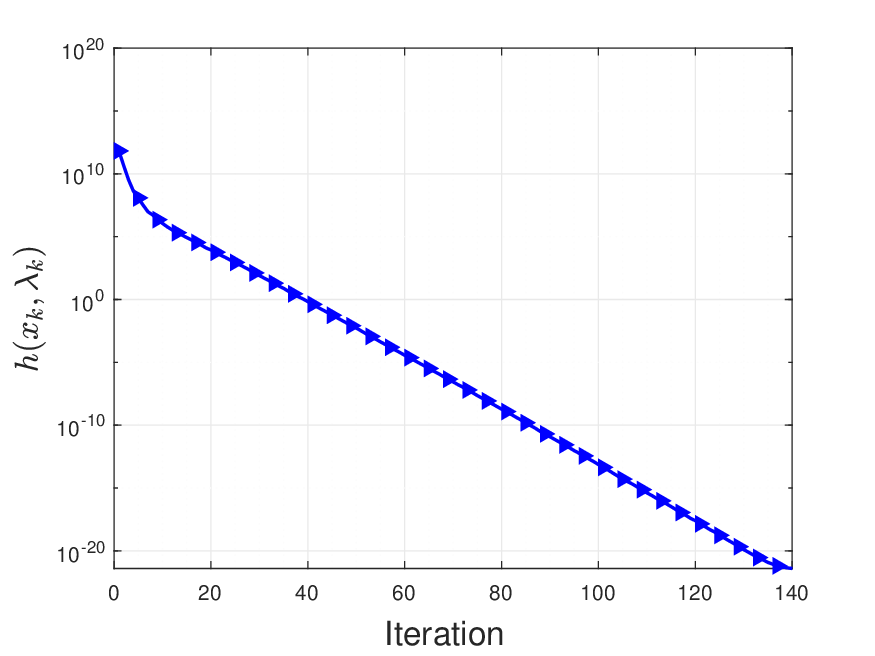}}\hspace{-4.5mm}
	\subfloat
	{\label{fig:7e}\includegraphics[width=0.33\textwidth]{./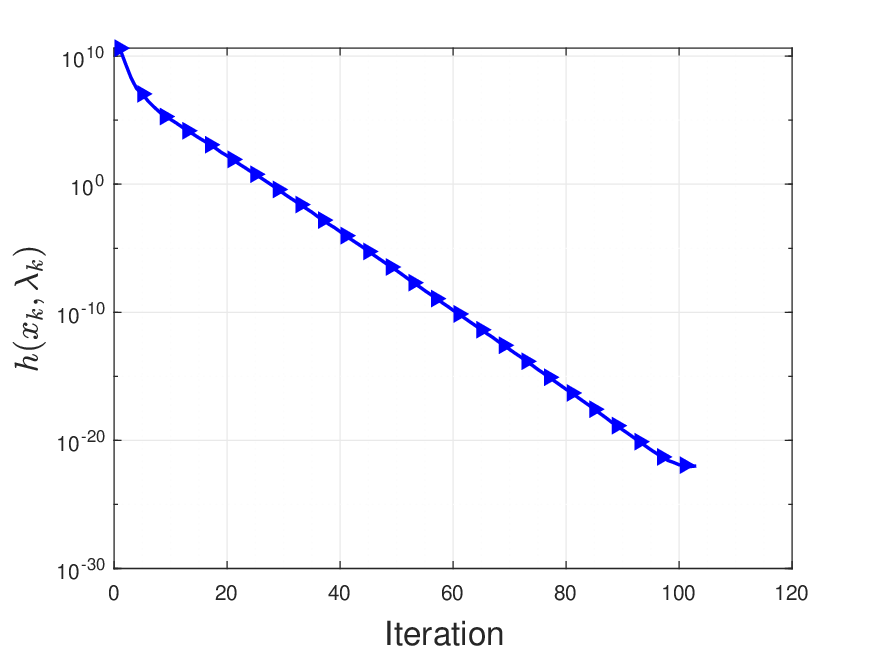}}\hspace{-4.5mm}
	\subfloat
	{\label{fig:7f}\includegraphics[width=0.33\textwidth]{./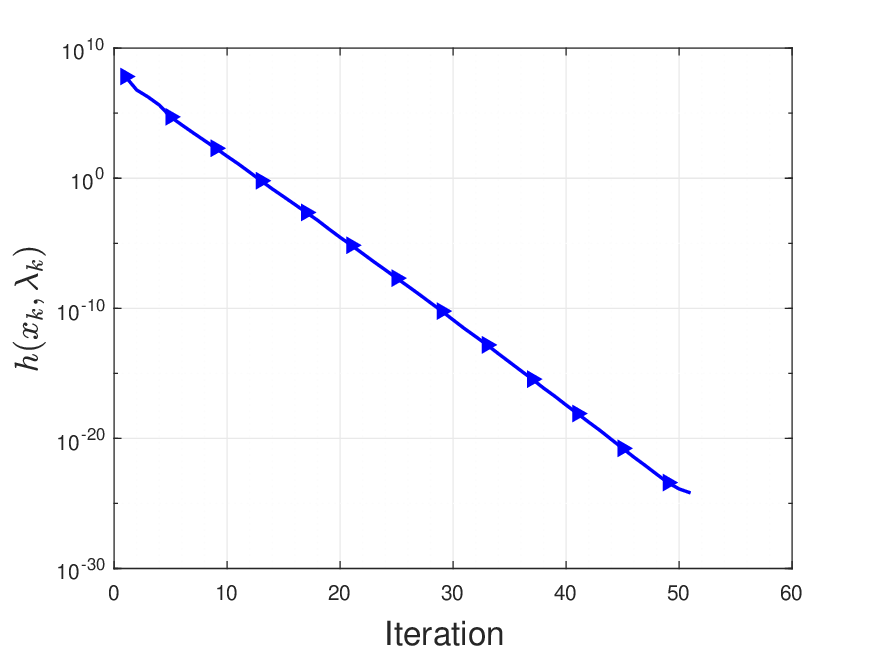}}
	\vspace{-3mm} 
	\caption{Relative errors of iterative solutions by \textsf{PNT} and \textsf{genHyb}, and the decrease of $h(\bx_k,\lambda_k)$. The test problem is {\sf PRblurspeckle}. From the leftmost column to the rightmost column, the noise levels are $\varepsilon=5\times10^{-2}, \ 10^{-1}, \ 5\times10^{-1}$.}
	\label{fig7}
\end{figure}

To further test the robustness of \textsf{PNT} and \textsf{genHyb} as the noise level gradually increases, we set the noise level of {\sf PRblurspeckle} to be $\varepsilon=5\times10^{-2}, \ 10^{-1}, \ 5\times10^{-1}$. \Cref{fig7} shows the corresponding relative error curves and the curves of $h(\bx_k,\lambda_k)$. We can find that, when the noise is not very big, both \textsf{PNT} and \textsf{genHyb} converge stably with almost the same accuracy. However, when the noise gradually increases, the situations are very different. First, we find that as the noise increases, \textsf{PNT} still converges stably, and faster. Second, $h(\bx_k,\lambda_k)$ can always decrease to an extremely small value, which is promised by \Cref{thm:conv}. The iterate of \textsf{PNT} stops when the step-length $\gamma_k$ becomes too small (less than $10^{-16}$), which happens more early if the noise is bigger. In comparison, the convergence of \textsf{genHyb} becomes unstable as the noise increases. For $\varepsilon=10^{-1}$, it can be observed that the relative error for \textsf{genHyb} increases slightly after a certain iteration, while for $\varepsilon=5\times10^{-1}$, the increase of relative error appears earlier and more clearly. This is a typical potential weakness of hybrid regularization methods, a challenge that the \textsf{PNT} method successfully addresses.

\section{Conclusion}\label{sec6}
For large-scale Bayesian linear inverse problems, we have proposed the projected Newton (\textsf{PNT}) method as a novel iterative approach for simultaneously updating both the regularization parameter and solution without any computationally expensive matrix inversions or decompositions. By reformulating the Tikhonov regularization as a corresponding constrained minimization problem and leveraging its Lagrangian function, the regularized solution and the corresponding Lagrangian multiplier can be obtained from the unconstrained Lagrangian function using a Newton-type method. To reduce the computational overhead of the Newton method, the generalized Golub-Kahan bidiagonalization is applied to project the original large-scale problem to become small-scale ones, where the projected Newton direction is obtained by solving the small-scale linear system at each iteration. We have proved that the projected Newton direction is a descent direction of a merit function, and the points generated by \textsf{PNT} eventually converge to the unique minimizer of this merit function, which is just the regularized solution and the corresponding Lagrangian multiplier.

Experimental tests on both small and large-scale Bayesian inverse problems have demonstrated the excellent convergence property, robustness and efficiency of \textsf{PNT}. The most demanding computational tasks in \textsf{PNT} are primarily matrix-vector products, making it particularly well-suited for large-scale problems.

An important remaining question is the convergence rate of \textsf{PNT}, i.e., how fast the three quantities $\|\bx_{k}-\bx_{DP}\|_2$, $|\lambda_{k}-\lambda_{DP}|$ and $h(\bx_{k},\lambda_k)$ converge to zero? The convergence rate may depend on several factors, including the ill-posedness of \cref{inverse1}, the smoothness of the true solution, the noise level, and the properties of $\{\bA,\bM,\bN\}$. We will conduct theoretical investigations into this issue in future work.

\section*{Acknowledgments}
The author thanks Dr. Felipe Atenas for helpful discussions. 

\bibliographystyle{siamplain}
\bibliography{references}

\end{document}